\documentclass[12pt] {amsart}
\usepackage{comment}
\usepackage{enumerate}
\usepackage{amssymb, amsmath}

\newtheorem{thm}{Theorem}[section]
\newtheorem{prop}[thm]{Proposition}
\newtheorem{lemma}[thm]{Lemma}
\newtheorem{cor}[thm]{Corollary}

\numberwithin{equation}{section}

\usepackage{url}

\def\Z{{\mathbb Z}}

\usepackage{hyperref}
\usepackage{geometry}
\usepackage[T1]{fontenc}
\usepackage{hyperref}
\usepackage{longtable}
\usepackage{enumerate}

 \hypersetup{
   linkcolor=green,
    filecolor=black,      
    urlcolor= cyan,
    }

\begin{document}

\title{A classification of Multiply Monogenic Quartic Orders}

\author{Shabnam Akhtari}
\address{Penn State University Mathematics Dept., McAllister Building, 
University Park, State College, PA 16802}

 \email {akhtari@psu.edu}
 
 \author{Jaxon Shumaker}
\address{Tillamook Bay Community College, Student Learning and Support Building, Tillamook, OR 97141}
 
 \email{jaxonshumaker@tillamookbaycc.edu}

\subjclass[2000]{11D25, 11D45,  11R04, 11R16}
\begin{abstract}
We study two-times monogenic quartic orders; i.e., those of the shape $\mathbb{Z}[\alpha] = \mathbb{Z}[\beta]$, with algebraic integers $\alpha$ and $\beta$  not $\mathbb{Z}$-equivalent. Two specific types, describing possible algebraic relation among monogenizers of two-times monogenic orders were defined by B\'erczes, Evetrse, Gy\H{o}ry in \cite{BEG}, where it is proven under certain conditions on the Galois group of the normal closure of a given number field $K$, that there can be only finitely many two-times monogenic $\mathbb{Z}$-orders in the ring of integers $K$ which
are not of these specific two types. In this article, we prove this fact for all quartic number fields. 
   \end{abstract}

\keywords{Monogenic quartic orders,  Thue equations, units and unit equations, Galois groups of quartic polynomials}

\maketitle

\section{Introduction}

Let $K$ be an algebraic number field and $O_K$ its ring of integers, and $O$  an order in  $O_K$ (a subring of $O_K$ with quotient field $K$). We call the ring $O$ {\it monogenic} if it is generated by one element as a $\Z$-algebra, i.e., $O=\Z[\alpha]$ for some $\alpha\in O$; the element $\alpha$ is  called a {\it monogenizer} of $O$.  If $\alpha$ is a monogenizer of $O$, than so are $\pm \alpha+c$ for every $c\in\Z$. Two monogenizers $\alpha$ and $\alpha'$ of $O$  are called  {\it $\mathbb{Z}$-equivalent} (or {\it equivalent}) if $\alpha'=\pm \alpha +c$ for some $c\in\Z$.  
We denote an equivalence class of monogenizers of $O$  a  {\it monogenization} of $O$.

By the fundamental work of Gy\H{o}ry \cite{Gyo76}, we know that any order in an algebraic number field can have at most finitely many monogenizations and that effectively computable upper bounds on the number of these monogenizations can be determined.  
An overview of various results on estimates for the number of monogenizations of orders in number fields is given in  \cite{Eve11}. There are further extensions and generalizations of such results in  \cite{EGbook}  (in particular, see Section 9.1). 

An order $O$ is called \emph{$k$-times monogenic} if  there exist at least $k$ pairwise inequivalent elements 
$\alpha_{1}, \ldots, \alpha_{k} \in {O}$ such that 
\begin{equation}\label{ktimesdef}
{O} = \mathbb{Z}[\alpha_{1}] = \cdots = \mathbb{Z}[\alpha_{k}].
\end{equation}
Similarly, the order ${O}$ is called \emph{precisely $k$-times monogenic} 
 if there are precisely 
$k$ 
pairwise $\mathbb{Z}$-inequivalent elements 
$\alpha_{1}, \ldots, \alpha_{k} \in{O}$ such that  \eqref{ktimesdef} holds.

It is well-known that any order in a quadratic number field is precisely one-time monogenic (see \cite{GaussI}, for example). For number fields of larger degree,  the following remarkable theorem is established in \cite{BEG},  by utilizing finiteness results on the number of solutions of unit equations in
more than two unknowns, together with some combinatorial arguments.
\begin{thm}[B\'erczes, Evetrse, Gy\H{o}ry] \label{BEGthm1.1}
Let $K$ be a number field of degree greater than $2$. Then there are at most
finitely many three-times monogenic orders in $K$.
\end{thm}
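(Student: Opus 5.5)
The plan follows the unit-equation strategy of B\'erczes, Evertse and Gy\H{o}ry. Let $n=[K:\mathbb{Q}]\ge 3$ and let $L$ be the normal closure of $K$, with embeddings $\sigma_1,\dots,\sigma_n$. For $\alpha\in O_K$ write $\alpha^{(i)}=\sigma_i(\alpha)$. Suppose $O=\mathbb{Z}[\alpha]=\mathbb{Z}[\beta]=\mathbb{Z}[\gamma]$ with $\alpha,\beta,\gamma$ pairwise inequivalent.

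\textbf{Step 1: reduction to units.} Since $\beta\in\mathbb{Z}[\alpha]$ and $\alpha\in\mathbb{Z}[\beta]$, for each pair $i\neq j$ the quotient
\[
\varepsilon_{ij}=\frac{\beta^{(i)}-\beta^{(j)}}{\alpha^{(i)}-\alpha^{(j)}}
\]
is an algebraic integer, and so is its inverse. Hence $\varepsilon_{ij}$ is a unit of $O_L$. The same holds for the analogous quotients $\eta_{ij}$ built from $\gamma$ and $\alpha$. For distinct $i,j,k$, the identity
\[
(\beta^{(i)}-\beta^{(j)})+(\beta^{(j)}-\beta^{(k)})+(\beta^{(k)}-\beta^{(i)})=0,
\]
after dividing by $\alpha^{(k)}-\alpha^{(i)}$, becomes a unit equation
\[
\varepsilon_{ij}\lambda_{ij}+\varepsilon_{jk}\lambda_{jk}=\varepsilon_{ik},
\]
where the $\lambda$'s are ratios of differences of conjugates of $\alpha$. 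The analogous identity for $\alpha$ itself gives $\lambda_{ij}+\lambda_{jk}=1$, and the $\lambda$'s are again units. So we obtain systems of $S$-unit equations, where $S$ consists of the archimedean places, over the fixed field $L$.

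\textbf{Step 2: finiteness away from degenerate solutions.} We apply the finiteness theorem for nondegenerate solutions of unit equations in several unknowns (Evertse, van der Poorten–Schlickewei). The relevant equations are the multi-term ones obtained from
\[
(\beta^{(i)}-\beta^{(j)})(\gamma^{(k)}-\gamma^{(l)})\ \text{ versus }\ (\alpha^{(i)}-\alpha^{(j)})\cdots,
\]
that is, from the combined system in $\varepsilon_{ij},\eta_{ij},\lambda_{ij}$. Each nondegenerate solution lies in a finite set of projective points. Those points determine the ratios $(\alpha^{(i)}-\alpha^{(j)})/(\alpha^{(k)}-\alpha^{(l)})$. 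Since $\alpha$ has degree $n$, this determines $\alpha$ up to $\alpha\mapsto u\alpha+c$. Integrality and the requirement that the discriminant be in $\mathbb{Z}$ then fix the order $O$ up to finitely many possibilities. Indeed, knowing the differences up to a scalar forces that scalar to lie in $\mathbb{Q}$ up to finitely many choices, and since $\mathbb{Z}[\alpha]$ is an order, the scalar is a rational integer, which is in turn bounded by $\mathbb{Z}[\alpha]=\mathbb{Z}[\beta]$ and the unit relations.

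\textbf{Step 3: degenerate solutions (the main obstacle).} The hard part is showing that vanishing subsums cannot occur for \emph{all} the equations simultaneously when three pairwise inequivalent monogenizers exist. A vanishing subsum typically forces a relation such as $\varepsilon_{ij}=\varepsilon_{kl}$, or $\beta$ to be a linear fractional image of $\alpha$. With two monogenizers this genuinely happens; these are exactly the special types of \cite{BEG}. I would handle it with a combinatorial analysis of the graph on pairs $\{i,j\}$, joining pairs whose $\varepsilon$'s are proportional with constant ratio. Degenerate relations for both $\beta$ and $\gamma$ force these graphs to be large. From this one deduces that either $\gamma=\pm\beta+c$ or $\gamma=\pm\alpha+c$, contradicting inequivalence, or else the remaining conjugate relations are again nondegenerate and Step 2 applies. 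I expect this case analysis to need $n\ge3$ essentially: for $n=2$ there is only one difference pair, so no nontrivial unit equation arises. The analysis would proceed by induction on the size of the partition that the vanishing subsums induce.
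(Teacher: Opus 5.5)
The paper does not prove this theorem. It quotes it from \cite{BEG}, remarking only that the proof combines finiteness theorems for unit equations in more than two unknowns with combinatorial arguments. Your outline has that overall shape, but Step~1 rests on a false claim and, as a result, misses the key idea. The ratios $\lambda_{ij}=(\alpha^{(i)}-\alpha^{(j)})/(\alpha^{(i)}-\alpha^{(k)})$ are \emph{not} units in general. They are $S$-units only for $S$ containing the primes dividing the discriminant of $\alpha$, and that set changes with the order. For example, for a root of $T^3-T-1$ (discriminant $-23$) the three differences generate three distinct prime ideals of the splitting field. So the equations in your Steps~1--2, which keep the $\lambda$'s as unknowns, do not live in a fixed finitely generated group, and no finiteness theorem applies to them. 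What you need is to eliminate the differences of $\alpha$ altogether. The nonzero vector $(\alpha^{(i)}-\alpha^{(j)},\alpha^{(j)}-\alpha^{(k)},\alpha^{(k)}-\alpha^{(i)})$ is orthogonal to $(1,1,1)$, to $(\varepsilon_{ij},\varepsilon_{jk},\varepsilon_{ki})$ and to $(\eta_{ij},\eta_{jk},\eta_{ki})$, hence
\[
\det\begin{pmatrix} 1 & 1 & 1\\ \varepsilon_{ij} & \varepsilon_{jk} & \varepsilon_{ki}\\ \eta_{ij} & \eta_{jk} & \eta_{ki}\end{pmatrix}=0 .
\]
This is a homogeneous six-term equation whose terms $\pm\varepsilon\eta$ are units of $O_L$, a group independent of the order. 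This is exactly where the \emph{third} monogenizer is indispensable. With $\alpha,\beta$ alone, the two linear relations on a triple of conjugates merely express $\lambda_{ij}=(\varepsilon_{ik}-\varepsilon_{jk})/(\varepsilon_{ij}-\varepsilon_{jk})$ and give no equation among units. That is why the two-times case needs four indices, as in \eqref{eq8.4BEG}, and has type~I/II exceptions. Your Step~2 gestures at combining $\beta$ and $\gamma$ but never writes down an equation in units alone.

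The other two ingredients are asserted rather than argued.

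\textbf{Degenerate solutions (Step~3).} The right starting point is that on each triple the linear relations force $\varepsilon_{ij},\varepsilon_{jk},\varepsilon_{ki}$ to be either all equal or pairwise distinct. The same holds for the $\eta$'s and for the quotients $\eta/\varepsilon$, which compare $\gamma$ with $\beta$. This rules out most two-term vanishing subsums. You must then check that every remaining partition still puts $\varepsilon_{ij}/\varepsilon_{jk}$ or $\eta_{ij}/\eta_{jk}$ in a finite set. For $n\ge 4$ you also have to handle triples on which the $\varepsilon$'s coincide even though $\beta$ is not equivalent to $\alpha$; there the determinant equation is trivially satisfied. Your claim that degeneracy forces $\gamma=\pm\alpha+c$ or $\gamma=\pm\beta+c$ is unsupported, and ``induction on the size of the partition'' is not an argument.

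\textbf{From finitely many $\tau(\alpha)$ to finitely many orders (end of Step~2).} This does not follow from ``integrality and the discriminant being in $\mathbb{Z}$''. All the orders $\mathbb{Z}[m\alpha_0]$, $m\in\mathbb{Z}$, have the same $\tau$. Bounding $m$ requires the genuinely nontrivial fact that a single $\mathbb{Q}$-equivalence class contains only finitely many $\mathbb{Z}$-classes of generators of two-times monogenic orders (Lemma~\ref{lem7.2BEG}, from \cite{BEG}). Your sentence that the scalar is ``bounded by the unit relations'' does not supply this.
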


Following  \cite{BEG},  we call  a monogenic order ${O}$ in $O_K$ 
 of \emph{type~I} if there exist $\alpha, \beta \in {O}$ and 
\[
C=\begin{pmatrix}
m_{1} & m_{2} \\
m_{3} & m_{4}
\end{pmatrix} \in \mathrm{GL}_2(\mathbb{Z})
\]
such that 
\begin{equation}\label{eq:typeI}
K = \mathbb{Q}(\alpha), \qquad 
{O} = \mathbb{Z}[\alpha] = \mathbb{Z}[\beta], \qquad 
\beta = \frac{m_{1}\alpha + m_{2}}{m_{3}\alpha + m_{4}}, \qquad 
m_{3} \neq 0 .
\end{equation}
Since $m_{3} \neq 0$,  $\beta$ is not  $\mathbb{Z}$-equivalent to $\alpha$

In \cite{BEG}, an order ${O}$  is called of \emph{type~II},
if there exist $\alpha, \beta \in {O}$ and 
$\mu_{0},\mu_{1},\mu_{2},\nu_{0},\nu_{1},\nu_{2} \in \mathbb{Z}$ with $\mu_{0}\nu_{0} \neq 0$ such that 
\begin{eqnarray}\label{eq:typeII}\nonumber
&& K = \mathbb{Q}(\alpha), \qquad 
{O} = \mathbb{Z}[\alpha] = \mathbb{Z}[\beta], \\
&&\beta = \mu_{0}\alpha^{2} + \mu_{1}\alpha + \mu_{2}, \qquad
\alpha = \nu_{0}\beta^{2} + \nu_{1}\beta + \nu_{2}.
\end{eqnarray}
Orders ${O}$ of type~II exist only for number fields of degree $4$.  Examples of number fields having infinitely many orders of type~I, 
and  of type~II are given in \cite{BEG}.

This article is motivated by results and conjectures in  \cite{BEG}, in which, among many other striking properties of monogenic orders,  the following theorem is proven for quartic number fields.
\begin{thm}[B\'erczes, Evetrse, Gy\H{o}ry]\label{BEGS4}
Let $K$ be a quartic number field whose normal closure has Galois group $S_{4}$. 
Then there are at most finitely many two-times monogenic orders in $K$ 
which are not of type~I or of type~II.
\end{thm}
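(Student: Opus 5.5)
The plan is to reduce the two-times monogenic condition to a unit equation in the normal closure $L$ of $K$, and then use the $S_4$ hypothesis to show that every degenerate solution forces type~I or type~II. Let $\alpha^{(1)},\dots,\alpha^{(4)}$ and $\beta^{(1)},\dots,\beta^{(4)}$ be the conjugates, indexed compatibly under the embeddings of $K$. Since $\mathbb{Z}[\alpha]=\mathbb{Z}[\beta]$, the two elements have the same discriminant. Moreover $\beta=f(\alpha)$ and $\alpha=g(\beta)$ with $f,g\in\mathbb{Z}[x]$. Hence for $i\neq j$ the quotients
\[
\varepsilon_{ij}=\frac{\beta^{(i)}-\beta^{(j)}}{\alpha^{(i)}-\alpha^{(j)}}
\]
are algebraic integers whose inverses are also integral, so they are units of $L$. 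For distinct $i,j,k$ we have the identity $(\alpha^{(i)}-\alpha^{(j)})+(\alpha^{(j)}-\alpha^{(k)})=\alpha^{(i)}-\alpha^{(k)}$, and the same identity for $\beta$. Combining the two gives the unit equation
\[
\varepsilon_{ij}\,\frac{\alpha^{(i)}-\alpha^{(j)}}{\alpha^{(i)}-\alpha^{(k)}}+\varepsilon_{jk}\,\frac{\alpha^{(j)}-\alpha^{(k)}}{\alpha^{(i)}-\alpha^{(k)}}=\varepsilon_{ik}.
\]
It is better to use the full system coming from all index quadruples. For four distinct indices the cross-ratio identity $(\alpha^{(i)}-\alpha^{(j)})(\alpha^{(k)}-\alpha^{(l)})+(\alpha^{(i)}-\alpha^{(l)})(\alpha^{(j)}-\alpha^{(k)})=(\alpha^{(i)}-\alpha^{(k)})(\alpha^{(j)}-\alpha^{(l)})$ holds. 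The left side is a sum of two terms, each a product of two $\alpha$-differences. Multiplying each product by the corresponding product of two $\varepsilon$'s turns it into the matching product of $\beta$-differences. Since the identity holds for $\beta$ as well, we get a homogeneous linear relation in three unknowns whose coefficients are $\alpha$-differences and whose unknowns are products of units.

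The key step is to apply the finiteness theorem for $S$-unit equations in several unknowns (Evertse--Schlickewei--Schmidt, as in \cite{BEG}). The relevant system is the one among the $12$ (or $6$ up to sign) units $\varepsilon_{ij}$, obtained by eliminating the $\alpha$-differences. Every solution either has no vanishing subsum, and so lies in finitely many projective classes, or satisfies some vanishing subsum.

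In the non-degenerate case, finitely many projective classes of $(\varepsilon_{ij})$ determine the ratios of the $\alpha$-differences, and hence determine $\alpha$ up to an affine map $\alpha\mapsto u\alpha+c$ with $u,c\in\mathbb{Q}$. The standard argument from Gy\H{o}ry's theorem then shows there are only finitely many pairs of orders $\mathbb{Z}[\alpha]$. This is because $\alpha$ is determined up to $\mathbb{Z}$-equivalence once the ratios are fixed and the discriminant constraint pins down the scaling.

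The degenerate case is the main obstacle. Here some vanishing subsum gives a multiplicative relation of the form $\varepsilon_{ij}/\varepsilon_{kl}=\pm\varepsilon_{i'j'}/\varepsilon_{k'l'}$, or an equality of cross-ratios. I would use the Galois action of $S_4$ on the index set, which acts $4$-transitively. Conjugating a single relation therefore produces relations for all index patterns of the same shape, and the case analysis reduces to a few orbit types. For each type I would show one of two things.
\begin{itemize}
\item The cross-ratios of the $\beta^{(i)}$ equal those of the $\alpha^{(i)}$ under the identity permutation. Then $\beta$ is a Möbius transform of $\alpha$ with rational, and after clearing denominators integral, coefficients, which is type~I.
\item The cross-ratios match under a permutation. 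The $4$-transitivity of $S_4$ is the reason only the identity permutation or the Klein-four-type permutations can occur compatibly with the Galois action. These lead to $\beta$ being a quadratic polynomial in $\alpha$ and vice versa, which is type~II.
\end{itemize}
Any remaining degenerate shapes should contradict the $S_4$ hypothesis, because they would force a nontrivial $\mathbb{Q}$-rational relation such as $\alpha^{(1)}+\alpha^{(2)}=\alpha^{(3)}+\alpha^{(4)}$, which implies a smaller Galois group. Carrying out this bookkeeping over the orbits of vanishing subsums is where I expect most of the work to lie.
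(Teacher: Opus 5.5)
Your architecture (the units $\varepsilon_{ij}$, elimination of the $\alpha$-differences to get a polynomial unit equation in the $\varepsilon_{ij}$ alone, the split into degenerate and non-degenerate solutions, and Galois conjugation of relations) is the B\'erczes--Evertse--Gy\H{o}ry strategy that the paper adapts in Sections~\ref{A4Section} and~\ref{lastproof}. However, the step that carries the theorem is left undone, and part of what you predict for it is wrong.

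The elimination yields \eqref{eq8.4BEG}, i.e.\ $(x_1-1)(x_2-1)(x_3-1)=(y_1-1)(y_2-1)(y_3-1)$ with $x_1=\varepsilon_{ik}/\varepsilon_{jk}$ etc., which is a $14$-term equation once expanded. A vanishing subsum does not give an exact relation $\varepsilon_{ij}/\varepsilon_{kl}=\pm\varepsilon_{i'j'}/\varepsilon_{k'l'}$. Minimal vanishing subsums of length at least $3$ only place certain monomial ratios in a finite set. What is needed is the classification of Proposition~\ref{prop8.1BEG}: some coordinate lies in a finite set $S$; or $(y_1,y_2,y_3)$ is a permutation of $(x_1^{\pm1},x_2^{\pm1},x_3^{\pm1})$; or some $x_ix_j$, $x_i/x_j$, $y_iy_j$, $y_i/y_j$ equals $-1$ or a primitive cube root of unity. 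This must be followed by a Galois analysis as in Lemma~\ref{lem9.3BEG}, reducing to three alternatives: all $\varepsilon_{ij}/\varepsilon_{ik}$ lie in a finite set, or $\varepsilon_{ij}\varepsilon_{kl}=\varepsilon_{ik}\varepsilon_{jl}$, or $\varepsilon_{ij}=-\varepsilon_{kl}$.

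In particular, your mechanism for type~II fails. The cross-ratio is invariant under the Klein four-group, so ``cross-ratios match under a Klein-four permutation'' is the same as matching under the identity, and it produces a M\"obius (type~I) relation. A type~II pair is not a cross-ratio coincidence at all. When $\varepsilon_{ij}=-\varepsilon_{kl}$, the cross-ratios of the $\beta^{(i)}$ differ from those of the $\alpha^{(i)}$ by the factors $\varepsilon_{ij}\varepsilon_{kl}/(\varepsilon_{ik}\varepsilon_{jl})=(\varepsilon_{ij}/\varepsilon_{ik})^2\neq1$. Type~II comes from this multiplicative relation, which arises in the permutation alternative from $\varepsilon_{13}/\varepsilon_{23}=\varepsilon_{24}/\varepsilon_{14}$: conjugating gives $\varepsilon_{14}^2=\varepsilon_{23}^2$, and the $+$ sign is excluded via \eqref{eq8.4BEG}. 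Even then, one must build $\alpha_0,\beta_0\in K$ from symmetric functions of $\varepsilon_{12},\varepsilon_{13},\varepsilon_{14}$ whose difference quotients reproduce the $\varepsilon_{ij}$ and which satisfy $\beta_0^2=\alpha_0+r_0$, $\alpha_0^2=\pm\beta_0+s_0$ with $r_0,s_0\in\mathbb{Q}$, and then transfer these relations to $\alpha,\beta$ via Lemma~\ref{lem6.3BEG}.

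Likewise, the configurations that must be excluded are killed by combining Galois conjugates of unit relations, not by a rational relation like $\alpha^{(1)}+\alpha^{(2)}=\alpha^{(3)}+\alpha^{(4)}$. For example, $\varepsilon_{23}^2=\varepsilon_{12}\varepsilon_{13}$ forces $(\varepsilon_{23}/\varepsilon_{13})^3=1$. For $S_4$ the field $\tilde K$ may contain $i$ or $\zeta_3$, so these root-of-unity cases need real work.

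Your non-degenerate case also rests on a false step: no discriminant is fixed, because the order varies. Knowing $\tau(\alpha)$ only fixes the $\mathbb{Q}$-class of $\alpha$, whose elements generate infinitely many orders (e.g.\ $\mathbb{Z}[m\alpha_0]$, of discriminant $m^{12}D(\alpha_0)$). The relation $D(\alpha)=D(\beta)$ only links the scalings of $\alpha$ and $\beta$ to each other, and Gy\H{o}ry's theorem concerns a fixed order. The missing ingredient is Lemma~\ref{lem7.2BEG}.

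Alternatively, you can bound the scaling directly. Here the $\varepsilon_{ij}$ themselves lie in a finite set, since their ratios do and their product is $\pm1$. So $\alpha=u\alpha_0+c$ and $\beta=u'\beta_0+c'$ with $\alpha_0,\beta_0$ from a finite list and $u'/u$ in a finite set. Write $\beta_0=g(\alpha_0)$ with $\deg g=d\ge2$, which holds because $\alpha,\beta$ are not $\mathbb{Q}$-equivalent. Then the coefficient $u'g_d/u^d$ of $\alpha^d$ in $\beta\in\mathbb{Z}[\alpha]$ must be a nonzero integer, and this bounds $u$.

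Finally, in your type~I bullet, clearing denominators only gives a primitive integral matrix. That it lies in $\mathrm{GL}_2(\mathbb{Z})$ is Lemma~\ref{lem6.4BEG}, which uses $\mathbb{Z}[\alpha]=\mathbb{Z}[\beta]$.
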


The following is our main result.
\begin{thm}\label{mainType}
Let $K$ be a quartic number field.
Then there are at most finitely many two-times monogenic orders in $K$ 
which are not of type~I or of type~II.
\end{thm}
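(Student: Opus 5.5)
Throughout, $K$ is quartic with normal closure $L$ and Galois group $G\le S_4$; $G$ is one of $S_4, A_4, D_4, V_4, C_4$. The case $G=S_4$ is Theorem~\ref{BEGS4}, so it remains to treat $G\in\{A_4,D_4,V_4,C_4\}$.

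\emph{Setup.} Let $O=\mathbb{Z}[\alpha]=\mathbb{Z}[\beta]$ with $\alpha,\beta$ inequivalent, and let $\alpha^{(i)},\beta^{(i)}$ ($i=1,\dots,4$) be the conjugates. Since both generate the same order, the discriminants agree up to sign. The index forms of $\alpha$ and $\beta$ therefore coincide, and for $i\neq j$
\[
\varepsilon_{ij}:=\frac{\beta^{(i)}-\beta^{(j)}}{\alpha^{(i)}-\alpha^{(j)}}
\]
is a unit of $L$. This follows from writing $\beta=f(\alpha)$ and $\alpha=g(\beta)$ with $f,g\in\mathbb{Z}[x]$: each difference quotient is an algebraic integer, and the product of each with its reciprocal is $1$.

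For distinct $i,j,k,l$ we have the Siegel-type identities
\[
(\alpha^{(i)}-\alpha^{(j)})+(\alpha^{(j)}-\alpha^{(k)})+(\alpha^{(k)}-\alpha^{(i)})=0,
\]
and the same for $\beta$. Eliminating the $\alpha$-differences gives homogeneous unit equations in the $\varepsilon_{ij}$, of the form
\[
\varepsilon_{ij}\,\delta_{ij}+\varepsilon_{jk}\,\delta_{jk}+\varepsilon_{ki}\,\delta_{ki}=0,
\]
where $\delta_{ij}=\alpha^{(i)}-\alpha^{(j)}$. Combining the triangles $ijk$ and $ijl$ yields a genuine unit equation in several unknowns, with the $\delta$'s eliminated via cross-ratio relations.

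We then follow the strategy of \cite{BEG}. By the finiteness theorem for unit equations, outside finitely many projective solutions some subsum vanishes. Finitely many projective solutions give finitely many tuples $(\varepsilon_{ij})$ up to scaling, hence finitely many orders, after the standard Gy\H{o}ry argument recovering $O$ from the ratios. So we must analyse the degenerate solutions, where some subsum vanishes.

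\emph{Main step: classifying vanishing subsums.} A vanishing subsum yields a relation of the form
\[
\varepsilon_{ij}=\varepsilon_{kl}
\qquad\text{or}\qquad
\varepsilon_{ij}=\pm\varepsilon_{ik}.
\]
The Galois group acts on the index pairs, and $\sigma(\varepsilon_{ij})=\varepsilon_{\sigma(i)\sigma(j)}$. A relation holding for one pair of indices therefore propagates to its whole $G$-orbit of index configurations. When $G=S_4$, the orbits are large enough to force either:
\begin{enumerate}[(a)]
\item a relation making $\beta$ a fractional linear transformation of $\alpha$, which is type~I; or
\item a relation making $\beta$ a quadratic polynomial in $\alpha$, and vice versa, which is type~II.
\end{enumerate}
For the smaller groups $A_4,D_4,V_4,C_4$, the orbits are smaller, and this is the main obstacle. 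A single vanishing relation propagates only to a partial set of index configurations, leaving cases not covered by \cite{BEG}.

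I would handle these cases by a direct case analysis over the orbits of $G$ on the pairs (and pairs of pairs) of $\{1,2,3,4\}$. In each case the propagated relations define a subvariety. On that subvariety I would try two things:
\begin{enumerate}[(i)]
\item show that $\alpha,\beta$ satisfy an explicit algebraic relation of type~I or type~II; or
\item reduce to a lower-dimensional unit equation, for instance in the subfield fixed by a stabilizer. For $D_4$, $V_4$, $C_4$ this is the quadratic subfield, so a relative unit equation over it, or a Thue equation of degree $\ge3$, has finitely many solutions by Gy\H{o}ry/Baker-type results.
\end{enumerate}

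In the quartic case with a quadratic subfield $M$, I expect a further reduction. The index form factors as a product of a binary cubic form (the resolvent) and a quadratic part. The cubic resolvent form equation $F(x,y)=\pm1$ has finitely many solutions unless it degenerates. I would check that the degeneracies correspond exactly to the type~I and type~II families, using the explicit $\mathrm{GL}_2(\mathbb{Z})$ action and the fact that the relevant ratios lie in $M$.

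Collecting all cases gives finiteness of the two-times monogenic orders not of type~I or type~II.
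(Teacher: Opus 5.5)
Your proposal has a genuine gap. Everything not already covered by Theorem~\ref{BEGS4} is deferred to a ``direct case analysis'' that you never carry out, and the concrete claims you make about it are not accurate.

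The degenerate alternatives of Proposition~\ref{prop8.1BEG} are multiplicative: either the $y$'s are a permutation of the $x_i^{\pm1}$, or some product or ratio equals $-1$ or a primitive cube root of unity. They do not yield $\varepsilon_{ij}=\varepsilon_{kl}$ or $\varepsilon_{ij}=\pm\varepsilon_{ik}$. What actually produces type~I is the cross-ratio identity $\varepsilon_{ij}\varepsilon_{kl}=\varepsilon_{ik}\varepsilon_{jl}$. This gives a M\"obius map that is Galois-invariant, hence rational, hence integral by Lemma~\ref{lem6.4BEG}. What produces type~II is $\varepsilon_{ij}=-\varepsilon_{kl}$ for every $(i,j,k,l)$.

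For $A_4$ you do not name the two inputs that make the argument of \cite{BEG} work. First, $A_4$ has no subgroup of index $2$, so $\tilde K$ has no quadratic subfield and contains neither $\sqrt{-1}$ nor a primitive cube root of unity. Second, the point stabilizer $\langle(2\,3\,4)\rangle$ turns $\varepsilon_{1i}=\varepsilon_{1j}$ into $\tau(\alpha)=\tau(\beta)$, so $\varepsilon_{ki}\neq\varepsilon_{kj}$ (Lemma~\ref{shabnot1}). With these, the paper excludes alternative (iii) and reduces (ii) to the two relations above.

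For $V_4,C_4,D_4$ both facts fail. Point stabilizers are trivial or of order $2$, and $\tilde K$ can be, e.g., $\mathbb{Q}(\sqrt{-1},\sqrt{3})$. So ``smaller orbits'' is not something a case analysis patches; the roots-of-unity alternative cannot be disposed of this way.

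Your last paragraph points toward the route the paper takes for $V_4,C_4,D_4$, but it misses the mechanism. The index form is not a product of the resolvent with a quadratic part. Instead, by Proposition~\ref{GPPmodified}, $x\xi+y\xi^2+z\xi^3$ is a monogenizer iff $(\mathbf{Q}_1,\mathbf{Q}_2)(x,y,z)=(u,v)$ with $F(u,v)=\pm1$. The key fact (Lemma~\ref{lemTypeI}) is that the fibre over $(\pm1,0)$ consists exactly of the monogenizers $\mathbb{Z}$-equivalent or type~I-related to $\xi$. So type~II plays no role here. What must be shown is that only finitely many orders of $K$ admit a solution $(u,v)\neq(\pm1,0)$.

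Finiteness of solutions for one fixed $F$ is beside the point; the issue is uniformity over infinitely many orders. Thue's theorem does not even apply, since by Proposition~\ref{KWlem} $F$ is reducible exactly for these groups. For $V_4$, $F=\prod_{i=1}^{3}(U-z_iV)$ with distinct $z_i\in\mathbb{Z}$. For $V\neq0$ the three integers $U-z_iV$ are distinct, so they cannot all be $\pm1$, and every two-times monogenic order is of type~I.

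For $C_4,D_4$ the paper passes to Bennett's form $U(U^2+aUV+bV^2)=1$ and argues with units of quadratic subfields of $\tilde K$. Be aware that the resolvent alone cannot supply uniformity. The pair $(1,-1)$ solves $U(U^2+bUV+bV^2)=1$ for every $b$, and $(b-2)^2-\Delta m^2=4$ has infinitely many solutions for each non-square $\Delta>0$. So a complete argument must also use solvability of $\mathbf{Q}_1=u$, $\mathbf{Q}_2=v$ for orders in the fixed field $K$.
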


We consider three cases to study the monogenizations of $\mathbb{Z}[\xi]$, where $\xi$ is an algebraic integer of degree $4$. These cases are determined by how the cubic resolvent form of $\xi$ (see \eqref{defofFu,v}, for the definition) splits over  $\mathbb{Q}$.
Using Proposition \ref{KWlem}, 
 We prove the following three lemmas, which together with Theorem \ref{BEGS4}  lead to Theorem \ref{mainType}.

\begin{lemma}\label{V4case}
Let $K$ be a quartic number field whose normal closure has Galois group  $V_4$, then every two-times monogenic order is of type I.
\end{lemma}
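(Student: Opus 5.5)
The plan is to reduce the statement to the parametrization of monogenizers in Proposition \ref{KWlem} and then exploit the fact that, when the Galois group is $V_4$, the cubic resolvent form splits completely over $\mathbb{Q}$. Let $O=\mathbb{Z}[\alpha]=\mathbb{Z}[\beta]$ with $\alpha,\beta$ inequivalent, and let $\alpha_1,\dots,\alpha_4$ be the conjugates of $\alpha$. Since $\mathrm{Gal}=V_4$, each of the three elements
\[
\theta_1=\alpha_1\alpha_2+\alpha_3\alpha_4,\quad \theta_2=\alpha_1\alpha_3+\alpha_2\alpha_4,\quad \theta_3=\alpha_1\alpha_4+\alpha_2\alpha_3
\]
is fixed by the whole Galois group, hence is a rational integer. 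I therefore expect the resolvent form $F(u,v)$ of \eqref{defofFu,v} to factor as $\pm\prod_{i=1}^3(u-\theta_i v)$ (or as the analogous product of three linear forms with integer coefficients, depending on the normalization in \eqref{defofFu,v}). Proposition \ref{KWlem} attaches to the monogenizer $\beta$ a pair $(u,v)\in\mathbb{Z}^2$ with $F(u,v)=\pm1$, and $\alpha$ itself corresponds to the trivial pair, say $(1,0)$.

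First I will solve $F(u,v)=\pm1$ explicitly. Each linear factor must equal $\pm1$, so $(\theta_i-\theta_j)v\in\{0,\pm2\}$ for all $i,j$. The $\theta_i$ are distinct because $\alpha$ has degree $4$ and the discriminant of $F$ is the discriminant of $\alpha$ up to sign. Hence $v=0$ gives only the class of $\alpha$. A solution with $v\neq0$ forces $|v|\in\{1,2\}$ and the pairwise differences of the $\theta_i$ to be $\pm1$ or $\pm2$, a very rigid configuration; the sign constraint that the three factors have product $\pm1$ cuts it down further. I will enumerate these few possibilities. In each of them I will record the corresponding $(u,v)$ and, through the second half of Proposition \ref{KWlem}, the linear relation it imposes between the coordinates of $\beta$ in the basis $1,\alpha,\alpha^2,\alpha^3$.

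The second step is to turn each surviving solution into an explicit Möbius relation. I will use the equivalent formulation that $\alpha,\beta$ are of type~I exactly when the binary quartic forms attached to $\alpha$ and $\beta$ are $\mathrm{GL}_2(\mathbb{Z})$-equivalent via a matrix with $m_3\neq0$. Equivalently, $m_3\alpha+m_4$ must be a unit of $O$ and $\beta(m_3\alpha+m_4)=m_1\alpha+m_2$. In the $V_4$ case, the nontrivial automorphism $\sigma$ of $K$ corresponding to $(12)(34)$ should make this concrete. The resolvent condition $u-\theta_1 v=\pm1$ should be equivalent to $(\alpha_1-\alpha_2)(\alpha_3-\alpha_4)$ being $\pm$ a suitable product of differences of the $\beta_i$. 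Together with the invariance of cross-ratios under the Klein four-group, this lets me write down integers $m_1,\dots,m_4$ directly from $\alpha_1+\alpha_2$, $\alpha_1\alpha_2$ and the like, which lie in $\mathbb{Z}$ or in a quadratic subfield. I will then check that $\det=\pm1$ and that $m_3\neq0$, the latter because $\beta$ is not $\mathbb{Z}$-equivalent to $\alpha$.

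The main obstacle I expect is this second step. Producing the matrix $C$ and verifying that its entries are rational integers, rather than merely rational numbers or elements of a quadratic subfield, requires the integrality coming from $O=\mathbb{Z}[\alpha]$. My first attempt will be to write $\beta$ as $x\alpha+y\alpha^2+z\alpha^3$ and impose the two quadratic conditions of Proposition \ref{KWlem} with the explicit $(u,v)$. I will then show that the $3\times3$ relation collapses so that $\beta(v'\alpha+w')$ is linear in $\alpha$ for appropriate integers $v',w'$ built from $u,v$ and the $\theta_i$, and read $C$ off from that relation. If this fails in one of the enumerated configurations, I will fall back on a direct computation with the normalized form $K=\mathbb{Q}(\sqrt a,\sqrt b)$.
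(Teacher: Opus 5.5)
Your first step agrees with the paper. For $V_4$ the resolvent is $F(U,V)=\prod_{i=1}^{3}(U-\theta_iV)$ with distinct $\theta_i\in\mathbb{Z}$, and each monogenizer $\beta=x\alpha+y\alpha^2+z\alpha^3$ gives $(u,v)=(\mathbf{Q}_1(x,y,z),\mathbf{Q}_2(x,y,z))$ with $F(u,v)=\pm1$. That parametrization is Proposition~\ref{GPPmodified}; Proposition~\ref{KWlem} is only the Kappe--Warren splitting criterion. However, the enumeration you plan for $v\neq0$ is empty. The three integers $u-\theta_iv$ all lie in $\{\pm1\}$, so two of them coincide, giving $(\theta_i-\theta_j)v=0$ for some $i\neq j$ and hence $v=0$. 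Your own constraint already shows this: $(\theta_1-\theta_2)v+(\theta_2-\theta_3)v=(\theta_1-\theta_3)v$ cannot have all three terms in $\{\pm2\}$. So the only solutions are $(\pm1,0)$, and the M\"obius constructions of your second step are aimed at configurations that never occur.

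The genuine gap is the sentence ``$v=0$ gives only the class of $\alpha$''. It is false, because the pair $(u,v)$ does not determine the monogenization. For $(u,v)=(\pm1,0)$ the monogenizers are parametrized by the solutions of the quartic Thue equation $\mathcal{Q}_{(1,0)}(P,Q)=\pm1$, and the type~I partners of $\alpha$ live exactly there. For example, $\alpha=\sqrt2+\sqrt3$ has minimal polynomial $T^4-10T^2+1$ and $V_4$ closure. The element $-\alpha^{-1}=\alpha^3-10\alpha$ generates the same order and is not $\mathbb{Z}$-equivalent to $\alpha$. Yet it has $(x,y,z)=(-10,0,1)$, with $\mathbf{Q}_1=1$ and $\mathbf{Q}_2=0$. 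Combined with the absence of $v\neq0$ solutions, your argument would prove that $V_4$ fields have no two-times monogenic orders at all, which this example refutes.

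What is missing is Lemma~\ref{lemTypeI}: a monogenizer with $(\mathbf{Q}_1,\mathbf{Q}_2)=(\pm1,0)$ is related to $\alpha$ by a M\"obius transformation in $\mathrm{GL}_2(\mathbb{Z})$. Concretely, $\mathbf{Q}_2=0$ and $\mathbf{Q}_1=\pm1$ force, after possibly replacing $\beta$ by $-\beta$, $(x,y,z)=(p^2-a_1pq+a_2q^2,\,pq,\,q^2)$ with $p,q\in\mathbb{Z}$. Put $c=q$, $d=a_1q-p$, $a=dx-a_3q^3$ and $b=-a_4q^3$. One checks that $\beta(c\alpha+d)=a\alpha+b$ and that $ad-bc=N_{K/\mathbb{Q}}(c\alpha+d)=\mathbf{Q}_1(x,y,z)=\pm1$. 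Moreover $q\neq0$ exactly when $\beta$ is not $\mathbb{Z}$-equivalent to $\alpha$.

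Alternatively, your cross-ratio idea does work, but it must be applied to the case $v=0$. There $\varepsilon_{12}\varepsilon_{34}=\varepsilon_{13}\varepsilon_{24}=\varepsilon_{14}\varepsilon_{23}=u$, so the $\alpha_i$ and the $\beta_i$ have equal cross-ratios. The M\"obius map sending $\alpha_i$ to $\beta_i$ is therefore Galois invariant, hence rational. Lemma~\ref{lem6.4BEG} then makes it integral, with $m_3\neq0$ by non-equivalence.
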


\begin{lemma}\label{newlemVC}
Let $K$ be a quartic number field whose normal closure has Galois group  $C_4$, or $D_8$.
Then there are at most finitely many two-times monogenic orders in $K$
which are not of type~I.
\end{lemma}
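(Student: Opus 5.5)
We will work with the index form equation and its factorization through the cubic resolvent. Fix a quartic order $O=\mathbb{Z}[\xi]$ with minimal polynomial $f$ of $\xi$. By the classical correspondence (Proposition \ref{KWlem}, in the form of Kappe--Warren / Gaál--Pethő--Pohst), $\mathbb{Z}$-inequivalent monogenizers $\beta$ of $O$ correspond to primitive integer solutions of $F(u,v)=\pm 1$, where $F$ is the cubic resolvent form of $\xi$ as in \eqref{defofFu,v}, paired with solutions of an auxiliary pair of quadratic forms $Q_1(x,y,z)=u$, $Q_2(x,y,z)=v$. When the Galois group of the normal closure is $C_4$ or $D_8$, the resolvent cubic has exactly one rational root. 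Equivalently, $F(u,v)=L(u,v)\,Q(u,v)$ over $\mathbb{Z}$ (up to a bounded content), with $L$ linear and $Q$ an irreducible binary quadratic form. The equation $F(u,v)=\pm1$ then forces $L(u,v)=\pm1$ and $Q(u,v)=\pm1$ up to bounded factors. The first step is to record this factorization and to note that solutions with $Q$ indefinite may be infinite in number, since they come from units in the quadratic field $\mathbb{Q}(\sqrt{\mathrm{disc}\,Q})$. This is precisely where type I orders can arise.

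The second step is to show that one special solution of $F=\pm1$ always corresponds to the trivial monogenization $\xi$ itself, namely $(u,v)$ equal to $(1,0)$ up to normalization. A second monogenization $\beta$ gives a second solution $(u_1,v_1)$. We then parametrize: the linear condition $L(u_1,v_1)=\pm 1$ puts $(u_1,v_1)$ on a line, and $Q(u_1,v_1)=\pm1$ is a Pell-type condition. We split into two cases according to whether the resulting $\beta$ satisfies a relation of the shape \eqref{eq:typeI}. Concretely, we compute $\beta$ in terms of $\xi$ from the solution $(x,y,z)$ of the quadratic pair. Whenever the pair $(u_1,v_1)$ lies in a certain degenerate subfamily, the conic system factors and $\beta$ is a Möbius transform of $\xi$ with $m_3\neq0$, so $O$ is of type I. Here we expect to use that a $C_4$ or $D_8$ field contains a quadratic subfield $M$. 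We also expect that $\mathbb{Z}$-equivalence classes of $\beta$ with $\beta=(m_1\xi+m_2)/(m_3\xi+m_4)$ correspond to an element of $\mathrm{GL}_2(\mathbb{Z})$ stabilizing the quartic form of $f$ up to sign.

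The third step, and the main obstacle, is the finiteness for non-type-I orders. The orders themselves vary, so we cannot fix $f$. Instead, following the strategy of \cite{BEG}, we fix $K$ and view a two-times monogenic order as giving $\alpha,\beta$ with $\beta$ a polynomial of degree $\le 3$ in $\alpha$ and conversely. Taking conjugates $\alpha^{(i)},\beta^{(i)}$, we form the standard identities
\[
\frac{\beta^{(i)}-\beta^{(j)}}{\alpha^{(i)}-\alpha^{(j)}}\in O_{L}^{*},
\]
where $L$ is the normal closure; these hold because the index of $\mathbb{Z}[\alpha]$ equals that of $\mathbb{Z}[\beta]$. The Siegel-type relations among the differences of conjugates then yield unit equations in the normal closure. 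For $S_4$, \cite{BEG} use the full transitivity to make the unit equations non-degenerate outside types I and II. For $C_4$ and $D_8$ the Galois group is smaller, so some of these unit equations may be degenerate in more ways. We would enumerate the vanishing subsums permitted by the action of $D_8$ (and of its subgroup $C_4$) on the six pairs $\{i,j\}$. Each such pair orbit under $D_8$ splits as $\{2\}\cup\{4\}$. In each degenerate case we would show either that the degeneracy forces the cross-ratio relation $\beta^{(i)}=(m_1\alpha^{(i)}+m_2)/(m_3\alpha^{(i)}+m_4)$ with rational $m$'s, hence type I, or that it forces $\alpha$ into a proper subfield, which is impossible. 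A type II degeneracy should not occur here, because type II requires $\beta$ to be quadratic in $\alpha$ in both directions; we would check that this configuration is incompatible with Galois group $C_4$ or $D_8$ except in cases already covered by type I. Non-degenerate solutions are finite in number up to proportionality by the subspace-theorem finiteness for unit equations, and this yields finitely many orders. The delicate part is making the case analysis of vanishing subsums complete for $D_8$.
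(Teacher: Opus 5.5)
Your proposal has a genuine gap, and the step you postpone cannot be carried out.

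\textbf{Steps 1--2.} These misread the Thue reduction. Since $F$ is monic with a rational (hence integral) root, $F=L\,Q$ with $L$ linear and $Q$ an irreducible quadratic form, both in $\mathbb{Z}[U,V]$. So $F(u,v)=\pm1$ forces $L(u,v)=\pm1$ and $Q(u,v)=\pm1$. That is a line meeting a conic, which gives only a bounded number of solutions, never a Pell family.

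Type~I is also not a ``degenerate subfamily'' of nontrivial solutions. By Lemma~\ref{lemTypeI} and Corollary~\ref{corTypeI}, type~I partners of $\xi$ come exactly from the trivial solution $(\pm1,0)$. Every non-type-I partner produces a solution $(u,v)\neq(1,0)$ of $F(U,V)=1$. This is the reduction the paper uses: Bennett's normal form $U(U^2+aUV+bV^2)$, then $u=\pm1$, then an attempt to show that only finitely many $(a,b)$ occur.

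\textbf{Step 3.} Here you abandon that reduction for the unit-equation method of \cite{BEG}. But the entire content of the step, namely the treatment of alternatives (ii) and (iii) of Proposition~\ref{prop8.1BEG} for $D_8$ and $C_4$, is left as ``we would enumerate''. The ingredients that make the $A_4$ argument work are also unavailable:
\begin{itemize}
\item 3-cycles fixing an index, used in Lemmas~\ref{shabnot1} and~\ref{lem9.3BEG} to propagate one relation to all index sets; $D_8$ has no 3-cycles.
\item The absence of $\sqrt{-1}$ and $\zeta_3$ from $\tilde{K}$; here $\tilde{K}$ may contain $i$.
\end{itemize}

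\textbf{The deferred claim is false, and so is the statement.} You assert that a type~II configuration cannot occur for $C_4$ or $D_8$ outside type~I. Here is a family showing otherwise.
\begin{itemize}
\item For $0\neq q\in\mathbb{Z}$ let $\alpha_q$ be a root of $x^2+(1+i)x+q$. It is a root of $P_q(x)=(x^2+x+q)^2+x^2\in\mathbb{Z}[x]$.
\item $\mathbb{Q}(\alpha_q)=\mathbb{Q}(i,\sqrt{1+2qi})$ is a quartic field with Galois group $D_8$, because $N(1+2qi)=1+4q^2$ is not a rational square.
\item Put $\beta_q=\alpha_q^2+\alpha_q+q=-i\alpha_q$. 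Then $\beta_q^2=-\alpha_q^2$, whence $\alpha_q=\beta_q^2+\beta_q-q$. So $\mathbb{Z}[\alpha_q]=\mathbb{Z}[\beta_q]$ is of type~II.
\item Write $-i\alpha_q(m_3\alpha_q+m_4)=m_1\alpha_q+m_2$ in the basis $1,\alpha_q$ over $\mathbb{Q}(i)$. The constant term forces $m_2=qm_3=0$, so no M\"obius relation with $m_3\neq0$ links $\alpha_q$ and $\beta_q$.
\item $1+2q'i=(1+2i)(c+di)^2$ whenever $(c-2d)^2-5d^2=1$ and $q'=c^2-d^2+cd$; for example $(c,d)=(17,4)$ gives $q'=341$. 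So the single field $K=\mathbb{Q}(i,\sqrt{1+2i})$ contains infinitely many of these orders, with distinct discriminants $64q'^2(1+4q'^2)$.
\item By Theorem~\ref{BEGthm1.1}, all but finitely many of these orders have only the two monogenizations $[\alpha_{q'}]$ and $[\beta_{q'}]$. Hence they are not of type~I.
\end{itemize}

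\textbf{Where the paper's own argument breaks.} The same family exposes it. One computes $F(U+2qV,V)=U(U^2+(4q-2)UV-4qV^2)$, with the nontrivial solution $(-1,-1)$ (the case $u=-1$, $\kappa=4$). Here $a^2/(a^2-4b)=(4q-2)^2/(16q^2+4)\to1$. The bounds \eqref{bound1}--\eqref{bound2} only bound the coefficients in \eqref{thismeans} from above; also, the derivation of \eqref{bound1} actually yields $2/3$, not $3/2$. Bounding $h$ needs the coefficient of $\eta^h$ to stay away from $0$, and here it does not.

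So the most one can hope to prove in the $C_4$/$D_8$ case is finiteness outside types~I and~II. That requires a separate treatment of type~II, which neither your argument nor the paper's supplies.
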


\begin{lemma}\label{newlemA4}
Let $K$ be a quartic number field whose normal closure has Galois group $A_{4}$. 
Then there are at most finitely many two-times monogenic orders in $K$ 
which are not of type~I or of type~II.
\end{lemma}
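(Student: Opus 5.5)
We want to prove Lemma \ref{newlemA4}: if the normal closure $L$ of $K$ has Galois group $A_4$, then all but finitely many two-times monogenic orders in $K$ are of type~I or type~II. The plan is to follow the strategy of Theorem \ref{BEGS4} and replace the steps that used the full group $S_4$ by an analysis adapted to $A_4$.

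\textbf{Step 1: reduction to the cubic resolvent.} Let $O=\mathbb{Z}[\alpha]=\mathbb{Z}[\beta]$ with $\alpha,\beta$ inequivalent. By Proposition \ref{KWlem}, the monogenizations of $O$ correspond to the primitive solutions $(u,v)$ of
\[
F_\alpha(u,v)=\pm1,
\]
where $F_\alpha$ is the cubic resolvent form \eqref{defofFu,v} of $\alpha$, up to the standard identification. The solution $(1,0)$, say, corresponds to $\alpha$. Writing $\beta=x+y\alpha+z\alpha^2+w\alpha^3$, the pair $(u,v)$ is a pair of explicit quadratic expressions in the coordinates of $\beta$.

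For Galois group $A_4$, the discriminant is a square, and the cubic resolvent of $\alpha$ is irreducible with cyclic cubic splitting field $M\subset L$. Hence $F_\alpha$ is an irreducible binary cubic form whose roots $\theta_1,\theta_2,\theta_3$ are permuted cyclically by $\mathrm{Gal}(M/\mathbb{Q})\cong C_3$. A solution $(u,v)$ gives units $\varepsilon_i=u-\theta_i v$ of norm $\pm1$ in $M$. These satisfy the Siegel identity
\[
(\theta_2-\theta_3)\varepsilon_1+(\theta_3-\theta_1)\varepsilon_2+(\theta_1-\theta_2)\varepsilon_3=0,
\]
which is an $S$-unit equation in three terms.

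\textbf{Step 2: use unit equations to isolate degenerate solutions.} Following \cite{BEG}, I would not work with a single form. Instead I would record all conjugates $\alpha^{(i)},\beta^{(i)}$, $i=1,\dots,4$, in $L$. The differences $\alpha^{(i)}-\alpha^{(j)}$ and $\beta^{(i)}-\beta^{(j)}$ have quotients
\[
\eta_{ij}=\frac{\beta^{(i)}-\beta^{(j)}}{\alpha^{(i)}-\alpha^{(j)}},
\]
which are units of $L$, because $\mathbb{Z}[\alpha]=\mathbb{Z}[\beta]$ gives equal discriminants. The relations
\[
(\alpha^{(i)}-\alpha^{(j)})+(\alpha^{(j)}-\alpha^{(k)})=\alpha^{(i)}-\alpha^{(k)}
\]
and the analogous relations for $\beta$ yield unit equations among the $\eta_{ij}$. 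By the finiteness theorem for unit equations, each nondegenerate solution is determined up to finitely many possibilities, and so is $O$ up to finitely many orders. What remains are the degenerate cases, where some subsum vanishes, giving multiplicative relations such as $\eta_{ij}=\eta_{kl}$ or $\eta_{ij}\eta_{kl}=\eta_{ik}\eta_{jl}$. In $S_4$ the Galois action permits only the two patterns that become type~I and type~II. In $A_4$ the group is smaller, so the list of Galois-stable degeneracy patterns must be recomputed. This means classifying the $A_4$-orbits on vanishing subsums, using the transitive action of $A_4$ on the pairs $\{i,j\}$ through $V_4$-cosets.

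\textbf{Step 3: identify each surviving pattern.} For each pattern I would show one of three things: it forces a Möbius relation $\beta=(m_1\alpha+m_2)/(m_3\alpha+m_4)$, which is type~I; or it forces a mutual quadratic relation, which is type~II; or it is incompatible with $[K:\mathbb{Q}]=4$ and Galois group $A_4$. The third case arises, for instance, when the relation would force $\alpha$ to lie in a quadratic subfield, but $K$ has none, since $A_4$ has no subgroup of index 2 containing the point stabilizer $C_3$.

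\textbf{Main obstacle.} I expect Step 2 to be the hardest: the enumeration of degenerate patterns that are stable under $A_4$ but not under $S_4$. Such patterns might a priori give a new infinite family. For these, my first attempt would be to use the cyclic structure of the resolvent field $M$. A pattern invariant under $A_4$ but not under a transposition corresponds to a relation among the $\varepsilon_i$ respected only by the 3-cycle. Composing this relation with its Galois conjugates should force either $\varepsilon_1^3\in\mathbb{Q}$, which gives finitely many solutions, or a relation that reduces the Thue equation to finitely many solutions or to an explicit type~II family.
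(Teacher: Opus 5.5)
The overall strategy is right: your $\eta_{ij}$ are the paper's $\varepsilon_{ij}$, and the idea is to study unit equations in them under the $A_4$-action. However, the two steps that carry the proof are missing, and the one analytic claim you make is incorrect as stated.

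\textbf{The unit equation.} The triangle relations $(\alpha^{(i)}-\alpha^{(j)})+(\alpha^{(j)}-\alpha^{(k)})=\alpha^{(i)}-\alpha^{(k)}$ and their $\beta$-analogues are not unit equations, because they involve the non-unit differences $\alpha^{(i)}-\alpha^{(j)}$. Similarly, your Siegel identity has coefficients $\theta_j-\theta_k$ that change with the order. You must eliminate these differences using four indices. This gives the identity \eqref{eq8.4BEG}, i.e.\ the unit solution \eqref{specialunitsol} of $(x_1-1)(x_2-1)(x_3-1)=(y_1-1)(y_2-1)(y_3-1)$.

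The degenerate solutions of this equation form infinite families. Describing them is not a routine consequence of ``the finiteness theorem''. It is the structural result Proposition~\ref{prop8.1BEG}, which you need to invoke explicitly: either a coordinate lies in a fixed finite set, or $(y_1,y_2,y_3)$ is a permutation of $(x_1^{\pm1},x_2^{\pm1},x_3^{\pm1})$, or some $x_ix_j$, $x_i/x_j$, $y_iy_j$, $y_i/y_j$ equals $-1$ or a primitive cube root of unity.

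Even in the finite-set branch, ``and so is $O$'' is not automatic. Finitely many values of $\varepsilon_{ij}/\varepsilon_{ik}$ give, through \eqref{eq8.3BEG}, finitely many $\tau(\beta)$, i.e.\ finitely many $\mathbb{Q}$-classes. You then need Lemma~\ref{lem7.2BEG} to reach finitely many $\mathbb{Z}$-classes.

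\textbf{The $A_4$ analysis.} This is the actual content of the lemma, and you leave it as ``must be recomputed''. Two facts make it work, and both are absent from your proposal.
\begin{enumerate}
\item Since $A_4$ has no subgroup of order $6$, the normal closure $\tilde K$ (not just $K$) has no quadratic subfield. So $\tilde K$ contains neither $\sqrt{-1}$ nor a primitive cube root of unity.
\item The two $3$-cycles fixing $k$ force $\varepsilon_{ki}\ne\varepsilon_{kj}$ for inequivalent monogenizers (Lemma~\ref{shabnot1}).
\end{enumerate}
These facts rule out the third alternative of Proposition~\ref{prop8.1BEG}. They also upgrade relations such as $\varepsilon_{14}^2=\varepsilon_{23}^2$ or $(\varepsilon_{23}/\varepsilon_{31})^3=1$ to equalities up to sign. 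With them, the second alternative reduces to exactly two patterns:
\begin{itemize}
\item $\varepsilon_{ij}\varepsilon_{kl}=\varepsilon_{ik}\varepsilon_{jl}$. The cross-ratios agree, so there is a Galois-invariant M\"obius map with rational coefficients, which lies in $\mathrm{GL}_2(\mathbb{Z})$ by Lemma~\ref{lem6.4BEG}. This is type~I.
\item $\varepsilon_{ij}=-\varepsilon_{kl}$. Then $u_0=\varepsilon_{12}\varepsilon_{13}\varepsilon_{14}=\pm1$, and the explicit $\alpha_0,\beta_0\in K$ satisfy $\beta_0^2=\alpha_0+r_0$ and $\alpha_0^2=u_0\beta_0+s_0$ with $r_0,s_0\in\mathbb{Q}$. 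They are $\mathbb{Q}$-equivalent to $\alpha,\beta$, which gives type~II.
\end{itemize}
Your sample pattern $\eta_{ij}=\eta_{kl}$ is in fact impossible here. Propagated by the $3$-cycles, it makes the two sides of \eqref{eq8.4BEG} negatives of each other, while both sides are nonzero.

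\textbf{The resolvent fallback.} This does not close the gap. Each order has its own form $F_\alpha$, so finiteness of solutions of each individual equation $F_\alpha=\pm1$ gives no bound on the number of orders. The ``$\varepsilon_1^3\in\mathbb{Q}$'' dichotomy is also not supported by any argument. Finally, the link to $F=\pm1$ is Proposition~\ref{GPPmodified}, not Proposition~\ref{KWlem}, and it is not a bijection with monogenizations.
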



Applications of an explicit approach, which was implemented in \cite{AkhEss} for the purpose of establishing improved bounds on the number of monogenizations of a quartic order, enable us to study relationships among monogenizations through the resolution of some Thue equations. This point of view is helpful in our proofs of Lemmas \ref{V4case} and \ref{newlemVC}. To prove Lemma \ref{newlemA4}, we will slightly modify some powerful tools established in \cite{BEG}. The proof of Theorem \ref{BEGS4} in \cite{BEG} uses finiteness results about unit equations, such as those in \cite{EG85, EGST}, which are ineffective. This makes it impossible to give a quantitative bound on the number of possible exceptions in Theorem \ref{BEGS4}, and in our Lemma \ref{newlemA4}. On the other hand,  in Section \ref{irreducibleresolvent}, the explicit nature of our method for the proof of Lemma \ref{newlemVC} allows us to find any two-times monogenic order that is not of type~I in a quartic order with a given discriminant.

In what follows, we denote two non-equivalent monogenizers of a given quartic order sometimes by $\xi$ and $\alpha$, and other times by $\alpha$ and $\beta$. This is to recall corresponding facts from both \cite{AkhEss} and \cite{BEG} wherever methods from these different papers are used.

\section{Preliminaries:  Discriminants, Thue Equations, and $\textrm{GL}_2(\mathbb{Z})$-actions}


\subsection{Discriminants}
We recall some basic definitions.
Let $K$ be an algebraic number field of degree $n$. 
Let $\alpha_{1}, \ldots, \alpha_{n}$ be  a linearly independent set of $n$ elements of $K$, and  $\sigma_{1}, \ldots, \sigma_{n} : K \rightarrow \mathbb{C}$   the
embeddings of $K$ into $\mathbb{C}$. The discriminant  $D_{K/\mathbb{Q}}(\alpha_{1}, \ldots, \alpha_{n})$ of $(\alpha_{1}, \ldots, \alpha_{n})$ is defined as the square of the determinant of the $n \times n$ matrix $\left(\sigma_{i}(\alpha_{j})\right)$,
 where $i, j \in \{1, \ldots, n\}$.
  Let $\beta_1, \beta_2, \ldots, \beta_n$ be an integral basis for an order $O$ in $O_K$. The discriminant of $O$ is defined as
 $
D_{K/\mathbb{Q}}(\beta_{1}, \ldots, \beta_{n}),
$
 and is independent of the choice of the integral basis  $\beta_1, \beta_2, \ldots, \beta_n$.
The discriminant of a number field $K$ is the discriminant of its maximal order, the ring of integers $O_K$.

The index $I(\alpha)$ of an algebraic integer $\alpha$ of degree $n$ is defined to be the index of the module $\mathbb{Z}[\alpha]$ in ${O}_{K}$, where $K = \mathbb{Q}(\alpha)$.

Let $\mathbf{P}(T) \in \mathbb{Z}[T]$ be a polynomial of degree $n$ and leading coefficient
 $a \in \mathbb{Z}$.
The discriminant $\textrm{Disc}(\mathbf{P})$ of $\mathbf{P}(T)$ is  
$$
\textrm{Disc}(\mathbf{P}) = a^{2n -2} \prod_{i <j} (\gamma_i - \gamma_j)^2,
$$
where $\gamma_1, \ldots, \gamma_n \in \mathbb{C}$ are the roots of $\mathbf{P}(T)$.

The discriminant of an algebraic number is equal to the discriminant of its minimal polynomial defined over $\mathbb{Z}$.
We have 
$$
D(\alpha) = I(\alpha)^2 D_K.
$$
A trivial but important observation for our purpose is that if $\mathbb{Z}[\alpha] = \mathbb{Z}[\beta]$, then
\begin{equation}\label{indexequal}
D(\alpha) = D(\beta), \quad \textrm{and} \,  I(\alpha)=  \pm I(\beta).
\end{equation}

Let $F(U, V) \in \mathbb{Z}[U, V]$ be a binary form of degree $n$ that factors over $\mathbb{C}$ as
$$
\prod_{i=1}^{n} (\alpha_{i} U - \beta_{i} V).
$$
The discriminant $D(F)$ of $F$ is given by
\begin{equation}\label{defofdisc}
D(F) = \prod_{i<j} (\alpha_{i} \beta_{j} - \alpha_{j}\beta_{i})^2.
\end{equation}

We note that the discriminant of the polynomial $F(U, 1) \in \mathbb{Z}[U]$ is equal to the discriminant of the binary form $F(U, V)  \in \mathbb{Z}[U, V]$.

\subsection{Thue Equations}\label{ThueBounds}

Let $F(U , V) \in \mathbb{Z}[U , V]$ be a binary form of degree at least $3$. If $F(U, V)$ is irreducible over $\mathbb{Q}$, for any integer $m$, it is shown in \cite{Thu}  that the equation 
$$
F(U , V) = m
$$
has at most finitely many solutions in integers $U$, $V$. These equations are called \emph{Thue equations}.  

In \cite{AkhEss} and \cite{Bha-notes}, it is shown how to reduce the problem of counting the number of monogenizations of a quartic order to the problem of counting the number of solutions of a cubic Thue equation and a family of related quartic Thue equations. We shall use this reduction to further study algebraic relations among different monogenizations of a given monogenic quartic order.

\subsection{Matrix Actions on Binary Forms}

 Let $F(U, V) \in \mathbb{Z}[U, V]$ and
$
A = \left( \begin{array}{cc}
a & b \\
c & d \end{array} \right)
$ be a $2 \times 2$ matrix with integer entries. 
We define the binary form $$F_{A}(U, V) \in \mathbb{Z}[U, V]$$
 by
$$
F_{A}(U , V) = F(a U+ b V,  c U+ d V).
$$

From the definition  of discriminant in \eqref{defofdisc}, we 
observe that for any $2 \times 2$ matrix $A$ with integer entries
\begin{equation}\label{St6}
D(F_{A}) = (\textrm{det} A)^{n (n-1)} D(F).
\end{equation}

We say that two integral binary forms $F$ and $G$ are {\it equivalent} if $G = \pm F_{A}$ for some $A \in \textrm{GL}_{2}(\mathbb{Z})$. This is, in fact, an equivalence relation. Moreover, the discriminants of two equivalent forms are equal.

For  $A = \left( \begin{array}{cc}
a & b \\
c & d \end{array} \right)
 \in \textrm{GL}_{2}(\mathbb{Z})$, and any $(u, v) \in \mathbb{Z}^2$, we clearly have 
$$A^{-1} = \pm \left( \begin{array}{cc}
d& -b \\
-c & a \end{array} \right)
$$
and 
$$F_{A} ( du-bv , -cu+av) =  \pm F(u , v).
$$
Therefore, there is a one-to-one correspondence between the solutions of the Thue equation $F(U, V) = \pm 1$ and those of the Thue equation $F_A(U , V) = \pm 1$.

\section{Monogenizers and solutions of  Thue Equations}\label{redtoThue}

Let $K = \mathbb{Q}(\xi)$ be a quartic number field, and assume that $\xi$ is an algebraic integer with the minimal polynomial 
\begin{equation}\label{minpolydef}
\mathbf{P}(T) = T^4 + a_{1} T^3 + a_{2} T^2 + a_{3} T + a_{4} \in \mathbb{Z}[T].
\end{equation}
 The \emph{cubic resolvent form} of $\xi$, or $\mathbf{P}(T)$,   is defined as follows.
\begin{equation}\label{defofFu,v}
 F(U, V)  = U^3 - a_{2} U^2 V+ (a_{1} a_{3} - 4a_{4}) U V^2 + (4 a_{2} a_{4} - a_{3}^2 - a_{1}^2 a_{4}) V^3.
\end{equation} 
We will use the following well-known fact, which can be found in many books on Galois theory (see \cite{DF}, for instance).
\begin{lemma}\label{discremark}
Suppose  $\xi$ is an algebraic integer of degree $4$ and $F(U, V)$ is its cubic resolvent form. We denote the algebraic conjugates of $\xi$ by $\xi^{(1)}$, $\xi^{(2)}$, $\xi^{(3)}$, and $\xi^{(4)}$, and define
$$
\xi_{i, j, k,l} = \xi^{(i)} \xi^{(j)} + \xi^{(k)} \xi^{(l)}.
$$
Then
\begin{equation}\label{prod2}
F(U, V) = (U - \xi_{1, 2, 3, 4} V) (U - \xi_{1, 3, 2, 4} V) (U - \xi_{1, 4, 2, 3} V).
\end{equation}
Further,  the discriminant of the cubic form $F(U, V)$  is equal to the discriminant of the minimal polynomial of $\xi$, as well as the discriminant of the ring $\mathbb{Z}[\xi]$.
\end{lemma}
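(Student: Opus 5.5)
The plan is to prove \eqref{prod2} by identifying the coefficients of the cubic with elementary symmetric functions of the three quantities $\theta_1=\xi_{1,2,3,4}$, $\theta_2=\xi_{1,3,2,4}$, $\theta_3=\xi_{1,4,2,3}$. The discriminant claims will then follow from an explicit factorization of the pairwise differences $\theta_i-\theta_j$, together with the standard identity for $\mathbb{Z}[\xi]$.

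Write $\xi^{(i)}=x_i$. Then $\mathbf{P}(T)=\prod_i(T-x_i)$, so $a_1=-e_1$, $a_2=e_2$, $a_3=-e_3$, $a_4=e_4$, where the $e_k$ are the elementary symmetric polynomials in $x_1,\dots,x_4$. Expanding $(U-\theta_1V)(U-\theta_2V)(U-\theta_3V)$ gives the coefficients $-(\theta_1+\theta_2+\theta_3)$, $\theta_1\theta_2+\theta_1\theta_3+\theta_2\theta_3$ and $-\theta_1\theta_2\theta_3$, and I express each in terms of the $e_k$.

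First, $\theta_1+\theta_2+\theta_3$ is the sum of all six products $x_ix_j$, so it equals $e_2=a_2$. This matches the coefficient $-a_2$ of $U^2V$.

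Next, each product $\theta_i\theta_j$ is a sum of four monomials of the shape $x_a^2x_bx_c$ with $a,b,c$ distinct. Summing over the three pairs produces each of the twelve such monomials exactly once. Since $e_1e_3=\sum x_a^2x_bx_c+4e_4$, the sum is $e_1e_3-4e_4=a_1a_3-4a_4$.

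Finally, $\theta_1\theta_2\theta_3$ is symmetric of degree $6$ and has degree at most $2$ in each variable. Writing it as a combination of $e_1^2e_4$, $e_2e_4$ and $e_3^2$ and matching the coefficients of a few monomials gives $\theta_1\theta_2\theta_3=e_1^2e_4+e_3^2-4e_2e_4$. Hence $-\theta_1\theta_2\theta_3=4a_2a_4-a_3^2-a_1^2a_4$, which proves \eqref{prod2}. This coefficient match is the only computation with any bookkeeping, and I expect it to be the main obstacle. I would check it on a specialization, for instance $x=(1,1,1,t)$ or $\mathbf{P}=T^4-1$, to guard against sign errors.

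For the discriminant, the key identity is
$$\theta_1-\theta_2=x_1x_2+x_3x_4-x_1x_3-x_2x_4=(x_1-x_4)(x_2-x_3),$$
and cyclically $\theta_1-\theta_3=(x_1-x_3)(x_2-x_4)$ and $\theta_2-\theta_3=(x_1-x_2)(x_3-x_4)$. Thus $\prod_{i<j}(\theta_i-\theta_j)^2=\prod_{i<j}(x_i-x_j)^2$. By \eqref{defofdisc}, the left side is $D(F)$ for the monic factorization of $F$, and the right side is $\textrm{Disc}(\mathbf{P})$, since $\mathbf{P}$ is monic.

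Lastly, $1,\xi,\xi^2,\xi^3$ is a $\mathbb{Z}$-basis of $\mathbb{Z}[\xi]$. The matrix $(\sigma_i(\xi^{j}))$ is a Vandermonde matrix, so its squared determinant is $\prod_{i<j}(x_i-x_j)^2=\textrm{Disc}(\mathbf{P})$. This is the discriminant of $\mathbb{Z}[\xi]$, which completes the lemma.
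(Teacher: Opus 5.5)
The paper gives no proof of this lemma; it quotes it as a standard fact from Galois theory and cites Dummit--Foote. Your direct symmetric-function verification is therefore a self-contained substitute, and most of it is correct:
\begin{itemize}
\item $\theta_1+\theta_2+\theta_3=e_2$;
\item the count giving $\sum_{i<j}\theta_i\theta_j=e_1e_3-4e_4$;
\item the factorizations $\theta_1-\theta_2=(x_1-x_4)(x_2-x_3)$ and its two companions, which give $D(F)=\textrm{Disc}(\mathbf{P})$ cleanly;
\item the Vandermonde computation for the basis $1,\xi,\xi^2,\xi^3$ of $\mathbb{Z}[\xi]$.
\end{itemize}

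The justification for the constant term is wrong, however. $\theta_1\theta_2\theta_3$ does \emph{not} have degree at most $2$ in each variable. The choice $x_1x_2\cdot x_1x_3\cdot x_1x_4$ contributes $x_1^3x_2x_3x_4$. Your own basis element $e_1^2e_4$ also has degree $3$ in every variable, so the bound you state would have excluded it. Your suggested test $x=(1,1,1,t)$, which gives $(1+t)^3$, exposes this.

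With the true bound, the homogeneous symmetric polynomials of degree $6$ in $x_1,\dots,x_4$ with degree at most $3$ in each variable form a $5$-dimensional space. It is spanned by $e_2e_4,\ e_1^2e_4,\ e_3^2,\ e_1e_2e_3,\ e_2^3$. So your three-term ansatz is not justified, and matching a few monomial coefficients against an unjustified ansatz does not prove the identity.

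Your final formula is nevertheless correct, and the repair is short: expand directly. Each monomial of the product picks one edge $\{a,b\}$ from each of the three perfect matchings of $\{1,2,3,4\}$. Such edges pairwise meet, so they form either a star or a triangle. This gives exactly eight monomials:
\[
\theta_1\theta_2\theta_3=e_4\sum_{a}x_a^2+\sum_{a<b<c}x_a^2x_b^2x_c^2=e_4\left(e_1^2-2e_2\right)+\left(e_3^2-2e_2e_4\right)=e_1^2e_4+e_3^2-4e_2e_4 .
\]
Here $e_3^2=\sum_{a<b<c}x_a^2x_b^2x_c^2+2e_2e_4$, because two distinct triples in $\{1,2,3,4\}$ share exactly two indices, so each cross term equals $e_4x_px_q$ for a unique pair $\{p,q\}$. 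With this fix your proof of the lemma is complete.
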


 Following  \cite{AkhEss} and \cite{GaPePo}, we define
\begin{eqnarray}\label{defofQ1}
&&\, \, \, \, \, \, \,  \mathbf{Q}_{1}(X , Y, Z) =  \\ \nonumber
&&X^2 - a_{1} XY + a_{2} Y^2 + (a_{1}^2 - 2 a_{2}) XZ 
+(a_{3} - a_{1} a_{2}) YZ + (- a_{1} a_{3} + a_{2}^2 + a_{4}) Z^2,
\end{eqnarray}
and
\begin{equation}\label{defofQ2}
\mathbf{Q}_{2}(X , Y, Z) = Y^2 - XZ - a_{1} YZ + a_{2} Z^2.
\end{equation}
The coefficients of the quadratic forms $\mathbf{Q}_{1}(X, Y,Z)$ and $\mathbf{Q}_{2}(X, Y, Z)$ are expressed in terms of the coefficients of $\mathbf{P}(T)$, the minimal polynomial of $\xi$.

We summarize some results from  Section 4  of \cite{AkhEss}, which builds upon an effective algorithm from
\cite{GaPePo} to treat general index form equations in quartic number fields.
Assume $\mathbb{Z}[\xi] = \mathbb{Z}[\alpha]$.
 By \eqref{indexequal}, the algebraic integers $\alpha$ and $\xi$, up to sign, have the same index.
 Further, since $\alpha \in  \mathbb{Z}[\xi]$, we have
\begin{equation}\label{noI0xyz}
 \alpha = a_{\alpha} + x \xi + y\xi^2 + z \xi^3,
\end{equation}
with $x, y, z \in \mathbb{Z}$.  Since  $\alpha$ and $ x \xi + y\xi^2 + z \xi^3$ are $\mathbb{Z}$-equivalent,  without loss of generality,  we may assume 
$$
 \alpha =  x \xi + y \xi^2 + z \xi^3.
 $$
The following is Lemma 4.1  of  \cite{AkhEss}. 
\begin{prop}\label{GPPmodified}
The algebraic integer $x\xi + y \xi^2 + z\xi^3$, with $x , y, z \in \mathbb{Z}$ is a monogenizer of $\mathbb{Z}[\xi]$ if and only if there is a solution $(u, v) \in \mathbb{Z}^2$ of the cubic Thue equation 
\begin{equation}\label{cubicThueeqmodified}
F(U, V) = \pm 1
\end{equation}
such that $(x , y, z)$ satisfies the system of quadratic ternary equations
\begin{equation}\label{twoquadraticsmodified}
\mathbf{Q}_{1}(X , Y, Z) = u , \, \, 
\mathbf{Q}_{2}(X , Y, Z) = v.
\end{equation}
\end{prop}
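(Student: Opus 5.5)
We have to prove Proposition \ref{GPPmodified}. The plan is to compute the index form of $\mathbb{Z}[\xi]$ explicitly and factor it.

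\textbf{Reduction to the index.} The element $\alpha = x\xi+y\xi^2+z\xi^3$ lies in $\mathbb{Z}[\xi]$, so $\mathbb{Z}[\alpha]\subseteq \mathbb{Z}[\xi]$. Equality holds if and only if the index $[\mathbb{Z}[\xi]:\mathbb{Z}[\alpha]]$ equals $1$. Writing $1,\alpha,\alpha^2,\alpha^3$ in the basis $1,\xi,\xi^2,\xi^3$ and taking discriminants gives
$$
D(\alpha)=J(x,y,z)^2 D(\xi),
$$
where $J(x,y,z)=\prod_{i<j}\frac{\alpha^{(i)}-\alpha^{(j)}}{\xi^{(i)}-\xi^{(j)}}$ is the index form, an integral ternary sextic. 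Hence $\alpha$ is a monogenizer exactly when $J(x,y,z)=\pm1$.

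\textbf{Factoring the index form.} The key identity, following \cite{GaPePo}, is
\begin{equation*}
J(x,y,z)=F\bigl(\mathbf{Q}_1(x,y,z),\mathbf{Q}_2(x,y,z)\bigr).
\end{equation*}
To prove it, group the six differences into the three pairs $\{(1,2),(3,4)\}$, $\{(1,3),(2,4)\}$ and $\{(1,4),(2,3)\}$. For each pair one checks that
$$
\frac{(\alpha^{(i)}-\alpha^{(j)})(\alpha^{(k)}-\alpha^{(l)})}{(\xi^{(i)}-\xi^{(j)})(\xi^{(k)}-\xi^{(l)})}
=\mathbf{Q}_1(x,y,z)-\xi_{i,j,k,l}\,\mathbf{Q}_2(x,y,z).
$$
Here $(\alpha^{(i)}-\alpha^{(j)})/(\xi^{(i)}-\xi^{(j)}) = x + y(\xi^{(i)}+\xi^{(j)}) + z\bigl((\xi^{(i)})^2+\xi^{(i)}\xi^{(j)}+(\xi^{(j)})^2\bigr)$, and similarly for $(k,l)$. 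Multiplying these two linear forms in $x,y,z$ gives a quadratic form whose coefficients are polynomials in the conjugates. These coefficients are symmetric under $i\leftrightarrow j$, under $k\leftrightarrow l$, and under swapping the two pairs. They can therefore be written as polynomials in $\xi_{i,j,k,l}$ and the elementary symmetric functions, which are $-a_1,a_2,-a_3,a_4$.

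One then matches coefficients. The coefficient of $X^2$ is $1$. The $XY$ coefficient is $\sum\xi^{(m)}=-a_1$. The $Y^2$ coefficient is $(\xi^{(i)}+\xi^{(j)})(\xi^{(k)}+\xi^{(l)}) = a_2-\xi_{i,j,k,l}$. The remaining coefficients are checked the same way. In each case the result is linear in $\xi_{i,j,k,l}$, with constant part equal to the coefficient in $\mathbf{Q}_1$ and with $-\xi_{i,j,k,l}$ times the coefficient in $\mathbf{Q}_2$. Multiplying the three factors and using \eqref{prod2} gives the identity.

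\textbf{Conclusion.} If $\alpha$ is a monogenizer, set $u=\mathbf{Q}_1(x,y,z)$ and $v=\mathbf{Q}_2(x,y,z)$. These are integers, and $F(u,v)=J=\pm1$. Conversely, given a solution $(u,v)$ of \eqref{cubicThueeqmodified} and a triple $(x,y,z)$ satisfying \eqref{twoquadraticsmodified}, we get $J(x,y,z)=F(u,v)=\pm1$, so $\alpha$ is a monogenizer.

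\textbf{Main obstacle.} The main difficulty is the coefficient bookkeeping in the factorization, especially the $XZ$, $YZ$ and $Z^2$ terms. These involve the power sums $(\xi^{(i)})^2+\xi^{(i)}\xi^{(j)}+(\xi^{(j)})^2$ and must be rewritten through Newton's identities, for example $\sum(\xi^{(m)})^2=a_1^2-2a_2$. I would verify these terms carefully, possibly by first specializing to $a_1=0$ after a translation $\xi\mapsto\xi+c$, and then restoring $a_1$.
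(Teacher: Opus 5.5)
Your proposal is correct and is essentially the standard argument behind this result: the paper gives no proof of its own but quotes Lemma 4.1 of \cite{AkhEss}, which rests on the Ga\'al--Peth\H{o}--Pohst factorization of the index form as $F(\mathbf{Q}_1(x,y,z),\mathbf{Q}_2(x,y,z))$, and you derive exactly this factorization by grouping the six differences according to the three pairings of $\{1,2,3,4\}$. The coefficient checks you left open do go through: the $XZ$, $YZ$ and $Z^2$ coefficients come out as $a_1^2-2a_2+\xi_{i,j,k,l}$, $a_3-a_1a_2+a_1\xi_{i,j,k,l}$ and $a_2^2-a_1a_3+a_4-a_2\xi_{i,j,k,l}$, with the $\xi_{i,j,k,l}^2$ terms cancelling in the last one.
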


Since the form  $F(U, V)$ is monic,   i.e., the coefficient of the term $U^3$ equals $1$, 
The pairs $(\pm1, 0)$ satisfy the equation $F(U, V) = \pm 1$, with $F$ given in  \eqref{defofFu,v}.
 This corresponds to the system of equations 
\begin{eqnarray}\label{system=0,1}\nonumber
\mathbf{Q}_1 (X, Y, Z)& =& \pm 1\\ 
\mathbf{Q}_2 (X, Y, Z)& = & 0,
\end{eqnarray}
where the ternary quadratic forms $\mathbf{Q}_1$ and $\mathbf{Q}_2$ are defined in \eqref{defofQ1} and \eqref{defofQ2}. We define the  binary  quadratic forms
\begin{eqnarray}\label{XYZpq}\nonumber
X(P, Q)& = &P^2  - a_{1} PQ+ a_{2} Q^2, \\
Y(P, Q)&=& PQ, \\ \nonumber
 Z(P, Q)&=&Q^2.
\end{eqnarray}
The integral  binary quadratic  forms  $X(P,Q)$, $Y(P,Q)$ and $Z(P,Q)$
 are constructed   by eliminating a variable in the ternary equation
$\mathbf{Q}_2 (X, Y, Z) =  0$. For more details on construction, we refer the reader to \cite[\S 4]{AkhEss},  in particular equations (27)--(30).

In \cite{AkhEss},  it is shown that there is a one-to-one correspondence between the set of  solutions $(X, Y, Z) \in \mathbb{Z}^3$ of the system of ternary equations \eqref{system=0,1} and the set of solutions $(P, Q) \in \mathbb{Z}^2$ of the quartic Thue equation 
\begin{equation}\label{Q10=1}
\mathcal{Q}_{(1, 0)}(P, Q)  = \pm 1,
\end{equation}
where
\begin{equation*}\label{letusquarticThue}
\mathcal{Q}_{(1, 0)}(P, Q) = \mathbf{Q}_{1}(X(P, Q) , Y(P,Q), Z(P,Q)).
\end{equation*}
More preciesly, each pair of solution $(p, q) \in \mathbb{Z}^2$  of  \eqref{Q10=1} corresponds to the monogenizer
$$
X(p, q) \xi + Y(p, q) \xi^2 + Z(p, q) \xi^3
$$
of the order $\mathbb{Z}[\xi]$. Clearly, the monogenizer $\xi$ is produced by the solution
 $(p, q) = ( 1, 0)$ of the quartic Thue equation \eqref{Q10=1}. We note that 
$$ \mathcal{Q}_{(1, 0)}(p, q) = \mathcal{Q}_{(1, 0)}(-p, -q)$$
for every $p, q \in \mathbb{Z}$. Further, since $X, Y, Z$ are quadratic forms, two integral solutions $(p, q)$ and $(-p, -q)$ of \eqref{Q10=1} produce the same monogenization of 
$\mathbb{Z}[\xi]$.

 \begin{lemma}\label{lemTypeI}
Let $K = \mathbb{Q}(\xi)$ be a quartic number field, and $\xi, \alpha \in O_K$.
Assume $\mathbb{Z}[\xi] = \mathbb{Z}[\alpha]$, and 
$
\alpha= x \xi + y \xi^2 + z \xi^3$, with
$ x, y, z \in \mathbb{Z}$.
Then there exist $a, b, c, d \in \mathbb{Z}$, with $ad - bc = \pm 1$ such that 
$$
\alpha = \frac{a\xi+ b}{c\xi + d},
$$
 if and only if  $(x , y, z) \in \mathbb{Z}^3 $ satisfies the system of equations \eqref{system=0,1}, where the ternary quadratic forms $\mathbf{Q}_1$ and $\mathbf{Q}_2$ are given in  \eqref{defofQ1} and \eqref{defofQ2}, with their coefficients expressed in terms of the coefficients of the minimal polynomial of  $\xi$.
\end{lemma}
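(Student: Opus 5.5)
The plan is to use the factorization of the index form behind Proposition \ref{GPPmodified}. I would first establish, or recall from \cite[\S 4]{AkhEss} and \cite{GaPePo}, the identity: for $\alpha = x\xi + y\xi^2 + z\xi^3$ and each partition $\{\{i,j\},\{k,l\}\}$ of $\{1,2,3,4\}$,
$$
\frac{(\alpha^{(i)}-\alpha^{(j)})(\alpha^{(k)}-\alpha^{(l)})}{(\xi^{(i)}-\xi^{(j)})(\xi^{(k)}-\xi^{(l)})} = \mathbf{Q}_1(x,y,z) - \xi_{i,j,k,l}\,\mathbf{Q}_2(x,y,z).
$$
Taking the product over the three partitions and using \eqref{prod2} recovers $I(\alpha)/I(\xi) = F(\mathbf{Q}_1,\mathbf{Q}_2)$, which is how Proposition \ref{GPPmodified} arises. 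I take this identity as the main tool, checking it by expanding $\alpha^{(i)}-\alpha^{(j)} = (\xi^{(i)}-\xi^{(j)})\bigl(x + y(\xi^{(i)}+\xi^{(j)}) + z(\xi^{(i)2}+\xi^{(i)}\xi^{(j)}+\xi^{(j)2})\bigr)$ and symmetrizing.

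\emph{($\Rightarrow$)} Suppose $\alpha = (a\xi+b)/(c\xi+d)$ with $ad-bc=\pm1$. Then
$$
\alpha^{(i)}-\alpha^{(j)} = \pm\frac{\xi^{(i)}-\xi^{(j)}}{(c\xi^{(i)}+d)(c\xi^{(j)}+d)}.
$$
Hence each of the three quotients above equals $\pm 1/N_{K/\mathbb{Q}}(c\xi+d)$, which does not depend on the partition. The three numbers $\xi_{1,2,3,4},\xi_{1,3,2,4},\xi_{1,4,2,3}$ are pairwise distinct, since $D(F)=D(\xi)\neq 0$ by Lemma \ref{discremark}. So $u - \xi_{i,j,k,l} v$ constant in the partition forces $v=\mathbf{Q}_2(x,y,z)=0$. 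Then $F(u,0)=u^3=\pm1$ by Proposition \ref{GPPmodified}, so $u=\mathbf{Q}_1(x,y,z)=\pm1$, and \eqref{system=0,1} holds.

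\emph{($\Leftarrow$)} If $\mathbf{Q}_2=0$ and $\mathbf{Q}_1=\pm1$, the identity shows that all three quotients equal $\pm1$. In particular, the cross-ratio of $(\alpha^{(1)},\alpha^{(2)},\alpha^{(3)},\alpha^{(4)})$ equals that of $(\xi^{(1)},\ldots,\xi^{(4)})$. Hence the unique Möbius transformation $M$ sending $\xi^{(m)}\mapsto\alpha^{(m)}$ for $m=1,2,3$ also sends $\xi^{(4)}\mapsto\alpha^{(4)}$.

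The obstacle is to show that $M$ has an integral representative in $\mathrm{GL}_2(\mathbb{Z})$. I would try first to make it explicit via the parametrization \eqref{XYZpq}: $(x,y,z)=(X(p,q),Y(p,q),Z(p,q))$ with $\mathcal{Q}_{(1,0)}(p,q)=\pm1$. One then searches for integer forms $a,b,d$ in $p,q$, with $c=\pm q$ suggested by the $\xi^3$-coefficient $q^2$, such that $(c\xi+d)\alpha-(a\xi+b)$ reduces to $0$ modulo $\mathbf{P}(T)$. One expects $ad-bc$ to be, up to sign, a power of $\mathcal{Q}_{(1,0)}(p,q)$, and hence $\pm1$.

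If that computation is unwieldy, there is a more conceptual fallback. Because $M$ commutes with the Galois action (it is determined by the conjugate triples), its coefficients can be scaled to lie in $\mathbb{Q}$, and then to a primitive integer matrix. The relation $\alpha^{(i)}-\alpha^{(j)}=\det M\,(\xi^{(i)}-\xi^{(j)})/((c\xi^{(i)}+d)(c\xi^{(j)}+d))$, combined with the quotients being $\pm1$, gives $(\det M)^2=\pm N(c\xi+d)$. Using $\mathbb{Z}[\alpha]=\mathbb{Z}[\xi]$ in both directions, with the roles of $\xi$ and $\alpha$ symmetric, one should force $|\det M|=1$ and $c\xi+d$ a unit.
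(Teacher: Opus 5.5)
Your ($\Rightarrow$) direction is correct, and it takes a different route from the paper's. You get $\mathbf{Q}_2(x,y,z)=0$ from the Ga\'al--Peth\H{o}--Pohst identity of \cite{GaPePo} and the distinctness of the three numbers $\xi_{i,j,k,l}$. The paper instead compares $cz\xi^4+(dz+cy)\xi^3+(dy+cx)\xi^2+(dx-a)\xi-b=0$ coefficientwise with the minimal polynomial of $\xi$, with a case split on $y$. Your version needs no case split and makes $\mathbf{Q}_1=\pm1$ explicit, which the paper leaves implicit.

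The genuine gap is in ($\Leftarrow$), exactly at the step you call the obstacle: you never produce $a,b,c,d\in\mathbb{Z}$ with $ad-bc=\pm1$. Route one stops at ``one expects'', and route two at ``one should force $|\det M|=1$''. The relations you record cannot settle this. These are $(\det M)^2=\pm N_{K/\mathbb{Q}}(c\xi+d)$ and its mirror $(\det M)^2=\pm N_{K/\mathbb{Q}}(a-c\alpha)$. They are consistent for every value of $\det M$, since their product is just the norm of the identity $(c\xi+d)(a-c\alpha)=\det M$. So norm considerations never exclude a prime $p\mid\det M$. The ring equality has to enter in an essential way. For example, modulo such a $p$ the matrix $M$ has rank one. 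When $p\nmid\gcd(c,d)$ this gives $(c\bar\xi+\bar d)(\bar\alpha-\lambda)=0$ in $O/pO=\mathbb{F}_p[\bar\xi]$ for some $\lambda\in\mathbb{F}_p$. That forces $\dim_{\mathbb{F}_p}\mathbb{F}_p[\bar\alpha]\le 2<4$, contradicting $\mathbb{F}_p[\bar\alpha]=O/pO$; the case $p\mid\gcd(c,d)$ is immediate from primitivity. This is precisely the content of Lemma~\ref{lem6.4BEG}. Citing it when $c\neq0$, and noting that $c=0$ forces $\alpha=\pm\xi$, would complete your fallback.

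Alternatively, your route one is a short computation, and it is what the paper does. If $z=0$ then $\alpha=\pm\xi$, so assume $z\neq0$. Replacing $\alpha$ by $-\alpha$ if necessary, write $z=q^2$ and $y=pq$. Take $c=q$, $d=(a_1z-y)/q$, $a=\bigl((a_1z-y)x-a_3z^2\bigr)/q$ and $b=-a_4q^3$; all are integers because $q\mid y$ and $q\mid z$. Then $(c\xi+d)\alpha-(a\xi+b)=q^3\,\mathbf{P}(\xi)=0$, where matching the $\xi^2$-coefficient is exactly the condition $\mathbf{Q}_2(x,y,z)=0$. Expanding gives $ad-bc=\mathbf{Q}_1(x,y,z)+(x-a_2z)\,\mathbf{Q}_2(x,y,z)/z=\mathbf{Q}_1(x,y,z)=\pm1$. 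So $ad-bc$ equals $\mathcal{Q}_{(1,0)}(p,q)$ itself, not merely some power of it. (The paper's display reads $c=z$ and $b=-a_4z$, but its determinant computation is carried out for $c=q$ and $b=-a_4q^3$.) Once this explicit matrix is in hand, your cross-ratio and Galois-descent steps are unnecessary.
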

\begin{proof}
Assume $\alpha$ is a monogenizer of $\mathbb{Z}[\xi]$ which satisfies
$$
\alpha = x \xi + y \xi^2 + z \xi^3 = \frac{a\xi+ b}{c\xi + d},
$$
with $x, y, z, a, b, c, d \in \mathbb{Z}$ and $c \neq 0$.  Then we have
$$
a\xi+ b = cz\xi^4 +(dz+cy)\xi^3 +(dy+cx)\xi^2 +xd\xi,
$$
and
$$
cz\xi^4 +(dz+cy)\xi^3 +(dy+cx)\xi^2 +(xd-a) \xi -b = 0.
$$
Since $\xi$ is an algebraic integer of degree $4$, we conclude that $z \neq 0$ and the coefficients of the  minimal polynomial  $\mathbf{P}(T)$, given in 
\eqref{minpolydef}, of $\xi$  are
\begin{equation}\label{aitypeI}
a_1 = \frac{dz+cy}{cz}, \quad a_{2} = \frac{dy+cx}{cz}, \quad a_{3} = \frac{dx- a}{cz}, \quad a_{4} = \frac{-b}{cz}.
\end{equation}

First, assume $y = 0$.  We readily get
$$
a_1 = \frac{d}{c} \quad \textrm{and} \quad  a_2 = \frac{x}{z}.
$$
In this case, we clearly have $(X, Y, Z) = (x, 0, z)$ satisfies the system of equations \eqref{system=0,1}.

Next, suppose $y \neq 0$. By \eqref{aitypeI}, we have 
$$
d = \frac{cza_2 - cx}{y} = \frac{cza_1 - cy}{z}.
$$
Since $c \neq 0$, we have
$$
 \frac{za_2 - x}{y} = \frac{za_1 - y}{z},
 $$
 that is $\mathbf{Q}_2(x , y, z) =0$,
 which means that $(x, y, z)$ satisfies the system of equations  \eqref{system=0,1}.
 
 We conclude that if the monogenizers $\xi$ and $\alpha$ of $\mathbb{Z}[\xi]$  have type~I relation and
 $$
 \alpha = x \xi + y \xi^2 + z \xi^3+c
 $$
then $(x, y, z)$ satisfy \eqref{system=0,1}. 

Now assume that 
$\alpha = x \xi + y \xi^2 + z \xi^3$ is a monogenizer of $\mathbb{Z}[\xi]$ and $(x, y, z) \in \mathbb{Z}^3$ satisfies  the system of equations  \eqref{system=0,1}. Assuming that the monogenizer $\alpha$  is not $\mathbb{Z}$-equivalent to $\xi$, \eqref{system=0,1} and \eqref{XYZpq} immediately imply that 
   $z \neq 0$. We proceed to show that $\xi$ and $\alpha$ have type~I relation. By the parametrization given in \eqref{XYZpq}, there exists a non-zero integer $q$ such that 
   \begin{equation}\label{zq2}
   z = q^2, \quad \textrm{and} \quad q\mid y.
   \end{equation}
  We take 
  \begin{equation}\label{letustakec}
  c = z,\, \quad d = \frac{a_1 z - y}{q}.
    \end{equation} 
Conseqently, since $\mathbf{Q}_2(x, y, z) = y^2 - xz - a_{1} yz + a_{2} z^2 = 0$, we have 
\begin{equation}\label{dyis}
dy = \left(a_1 z - y\right) y = a_2 z^2 - zx.
\end{equation}

Let us take
\begin{equation}\label{letustakea,b}
b =  -a_4 z , \,  \quad a = \frac{-a_3 z^2 + dx}{q} = \frac{-a_3 z^2 +a_1 x z - xy}{q} ,  
\end{equation}
where the last identity is by replacing $d$ with its value given in \eqref{letustakec}.
With the integer values of $a, b, c$,  $d$, and $q$ given in \eqref{letustakec}, \eqref{letustakea,b},  and \eqref{zq2}, we have
\begin{eqnarray*}
ad - bc &=& \frac{\left(-a_3 z^2 +a_1 x z - xy\right) \left(a_1 z - y\right)}{q^2} -  \left(-a_4 z^2 \right) \\
&=&  \frac{xy^2  +a_3 yz^2+{a_{1}}^2 x  z^2 -2a_1 xyz  +\left(-a_1 a_3 +a_4  \right)z^3}{z}\\
& = & \frac{z \mathbf{Q}_{1}(x, y, z) -a_1 xyz + xy^2 -x^2 z -a_2y^2z +2a_2 x z^2 +a_1 a_2 yz^2 -{a_2}^2 z^3}{z}\\
& = &  \mathbf{Q}_{1}(x, y, z)   +  \frac{\left(x -a_2 z\right)\left(y^2 - xz- a_{1} yz + a_{2} z^2\right)}{z}\\
& = &  \mathbf{Q}_{1}(x, y, z)   +  \frac{\left(x -a_2 z\right) \mathbf{Q}_{2}(x, y, z)}{z} = \pm 1,
\end{eqnarray*}
where the last idenity is because   $(x, y, z)$ satisfies \eqref{system=0,1}.
Therefore, $\left(\begin{array}{ll}
a & b\\
c & d
\end{array}\right) \in \textrm{GL}_{2}(\mathbb{Z})$. 
\end{proof}

Lemma \ref{lemTypeI}, together with Proposition \ref{GPPmodified}, implies the following.

\begin{cor}\label{corTypeI}
Let $K = \mathbb{Q}(\xi) = \mathbb{Q}(\xi)$ be a quartic number field, and $\xi, \alpha \in O_K$.
Assume $\mathbb{Z}[\xi] = \mathbb{Z}[\alpha]$. 
If $\xi$ and $\alpha$ are neither $\mathbb{Z}$-equivalent, nor have a type~I relation (see \eqref{eq:typeI} for definition), then there exists a non-trivial integral solution $(u, v) \neq (1, 0)$  to the cubic Thue equation $F(U, V) = 1$, where  $F(U, V)$ is the cubic resolvent form of $\xi$, given in \eqref{defofFu,v}.
 \end{cor}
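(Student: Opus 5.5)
The plan is to combine Proposition \ref{GPPmodified} with Lemma \ref{lemTypeI}, arguing by contraposition. Since $\alpha \in \mathbb{Z}[\xi]$, we first write $\alpha = a_\alpha + x\xi + y\xi^2 + z\xi^3$ with $x,y,z \in \mathbb{Z}$. Replacing $\alpha$ by the $\mathbb{Z}$-equivalent element $x\xi + y\xi^2 + z\xi^3$ changes neither the non-equivalence to $\xi$ nor the existence of a type~I relation. For the latter, note that if $\alpha' = \alpha - a_\alpha$ satisfies $\alpha' = (a\xi+b)/(c\xi+d)$, then $\alpha = ((a + a_\alpha c)\xi + (b + a_\alpha d))/(c\xi+d)$. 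This matrix is still in $\mathrm{GL}_2(\mathbb{Z})$ and has the same lower-left entry $c$. So we may assume $a_\alpha = 0$.

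Next, Proposition \ref{GPPmodified} applies, since $\alpha$ is a monogenizer of $\mathbb{Z}[\xi]$. It gives a pair $(u,v) \in \mathbb{Z}^2$ with $F(u,v) = \pm 1$ and $\mathbf{Q}_1(x,y,z) = u$, $\mathbf{Q}_2(x,y,z) = v$. Suppose $(u,v) \in \{(\pm1,0)\}$. Then $(x,y,z)$ satisfies \eqref{system=0,1}, and Lemma \ref{lemTypeI} yields $a,b,c,d$ with $ad-bc = \pm1$ and $\alpha = (a\xi+b)/(c\xi+d)$. If $c \neq 0$ this is a type~I relation. If $c = 0$, then $a,d \in \{\pm 1\}$ and $\alpha = \pm\xi \pm b$ is $\mathbb{Z}$-equivalent to $\xi$. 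Either case contradicts the hypotheses. Hence $v \neq 0$, so $(u,v)$ is a solution of $F(U,V) = \pm 1$ other than $(\pm 1, 0)$.

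Finally, we normalize the right-hand side. Since $F$ is a cubic form, $F(-u,-v) = -F(u,v)$. Thus, replacing $(u,v)$ by $(-u,-v)$ if necessary, we obtain a solution of $F(U,V) = 1$, still with $v \neq 0$, and in particular different from $(1,0)$.

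The only delicate point is checking that Lemma \ref{lemTypeI} really covers both degenerate possibilities: the case $c = 0$, handled above, and the sign $\mathbf{Q}_1 = -1$, which \eqref{system=0,1} already allows. I would also make sure that the reduction $a_\alpha = 0$ is compatible with the definition of type~I; it is, by the matrix computation in the first paragraph. Beyond these checks the corollary is a direct consequence of the two cited results.
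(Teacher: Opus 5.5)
Your proposal is correct and takes the same route as the paper, which simply states that the corollary follows from Lemma \ref{lemTypeI} combined with Proposition \ref{GPPmodified}. You have filled in the details the paper leaves implicit: translating away $a_\alpha$; noting that $v=0$ forces $u=\pm1$ because $F(u,0)=u^3$; treating $c=0$ as $\mathbb{Z}$-equivalence; and normalizing the sign via $F(-u,-v)=-F(u,v)$.
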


\section{Reducible Cubic Resolvent Forms, Proof of Lemmas \ref{V4case}  and \ref{newlemVC}}\label{irreducibleresolvent}

The following is a summary of Theorem 1 of \cite{KW89}, which helps us break the proof of our main theorem into three cases.
\begin{prop}[Kappe and Warren]\label{KWlem}
Let  $K$ be a quartic number field, and $\tilde{K}$  the normal closure of $K$. Assume $\xi \in O_K$ and $K= \mathbb{Q}(\xi)$. Let $F(U, V) \in \mathbb{Z}[U, V]$ be the cubic resolvent form \eqref{defofFu,v} of the minimal polynomial of $\xi$, given in \eqref{minpolydef}. Then 
\begin{enumerate}[(i)]
    \item  $F(U, V)$ has three distict linear factors in $\mathbb{Z}[U, V]$,  if the Galois group of $\tilde{K}$ is isomorphic to  $V_4$,
\item $F(u, v)$ has a linear and an irreducible quadratic factor in $\mathbb{Z}[U, V]$, if the Galois group of $\tilde{K}$ is  isomorphic to $C_4$ or $D_8$, and 
\item $F(u, v)$ is irreducible in $\mathbb{Z}[U, V]$, if the Galois group of $\tilde{K}$ is  isomorphic to $A_4$ or $S_4$.
\end{enumerate}
\end{prop}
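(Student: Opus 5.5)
Write $\theta_1=\xi_{1,2,3,4}$, $\theta_2=\xi_{1,3,2,4}$, $\theta_3=\xi_{1,4,2,3}$. By Lemma \ref{discremark}, $F(U,1)=\prod_{i}(U-\theta_i)$, and the discriminant of $F$ equals the discriminant of $\mathbf{P}$. Since $\mathbf{P}$ is irreducible, this discriminant is non‑zero, so $\theta_1,\theta_2,\theta_3$ are distinct. The plan is to read off the factorization of $F$ over $\mathbb{Q}$ from the orbits of the Galois group $G=\mathrm{Gal}(\tilde K/\mathbb{Q})$ on $\{\theta_1,\theta_2,\theta_3\}$, and then pass from $\mathbb{Q}$ to $\mathbb{Z}$ by Gauss's lemma.

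\textbf{Step 1: $G$ acts on the $\theta_i$ through a quotient of order dividing $6$.} Label the conjugates so that $G$ is a transitive subgroup of $S_4$ acting on the indices of $\xi^{(1)},\dots,\xi^{(4)}$. Each $\theta_i$ corresponds to one of the three partitions of $\{1,2,3,4\}$ into two pairs, and $S_4$ permutes these partitions. This gives the standard surjection $\pi:S_4\to S_3$ with kernel $V_4=\{e,(12)(34),(13)(24),(14)(23)\}$. The assignment $\theta_i \mapsto$ (its partition) is $G$‑equivariant, and because the $\theta_i$ are distinct, the orbits of $G$ on the $\theta_i$ are exactly the orbits of $\pi(G)$ on the three partitions.

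\textbf{Step 2: orbits correspond to irreducible factors.} For each orbit $\mathcal{O}$, the polynomial $\prod_{\theta\in\mathcal O}(U-\theta V)$ is fixed by $G$, so it has rational coefficients. It is irreducible over $\mathbb{Q}$, since $G$ permutes the roots of any rational factor and hence any such factor must contain a full orbit. Since $F$ is monic in $U$ with integer coefficients, Gauss's lemma puts these factors in $\mathbb{Z}[U,V]$; in particular, rational roots $\theta_i$ are integers.

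\textbf{Step 3: case check over the transitive subgroups of $S_4$.} This is where the real content lies, though it is a finite computation.
\begin{enumerate}[(i)]
\item If $G=V_4$, then $\pi(G)$ is trivial, giving three distinct linear factors.
\item If $G=D_8$, say $D_8=\langle(1234),(13)\rangle$, then $\pi(G)$ has order $2$. If $G=C_4=\langle(1234)\rangle$, then $(1234)$ sends $\xi^{(1)}\xi^{(2)}+\xi^{(3)}\xi^{(4)}$ to $\xi^{(2)}\xi^{(3)}+\xi^{(4)}\xi^{(1)}$; that is, it swaps $\theta_1$ and $\theta_3$ and fixes $\theta_2$. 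In both cases $\pi(G)$ is generated by a single transposition, so there is one fixed $\theta$ and one orbit of size $2$. This gives a linear factor times an irreducible quadratic factor.
\item If $G=A_4$ or $S_4$, then $\pi(G)=A_3$ or $S_3$, which is transitive, so $F$ is irreducible.
\end{enumerate}
The only place requiring care is identifying the images $\pi(G)$ for the specific embeddings of $C_4$ and $D_8$. For $D_8$ one uses that it contains $V_4$ with index $2$, so $|\pi(D_8)|=2$. Conjugating the subgroup inside $S_4$ does not change the orbit sizes, so checking one representative of each conjugacy class suffices.
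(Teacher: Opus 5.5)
Your argument is correct, but it does not follow the paper's route, because the paper gives no proof of this proposition. It simply quotes Theorem~1 of Kappe--Warren \cite{KW89}. That theorem is an if-and-only-if classification: irreducibility of the resolvent and whether $D(\mathbf{P})$ is a square, plus a further splitting test to separate $C_4$ from $D_8$.

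You instead prove directly, via the surjection $S_4\to S_3$ with kernel the normal Klein four-group, only the implication the paper actually uses in Lemmas \ref{V4case} and \ref{newlemVC}: Galois group $\Rightarrow$ shape of the factorization of $F$. Your version is self-contained. It also shows why the roots $\theta_i$ lie in $\tilde K$ and why rational roots are integers, which is exactly what those later proofs rely on. The citation additionally supplies the converse directions and the finer $C_4$/$D_8$ test, which the paper never needs.

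One point should be stated explicitly. In case (i) you silently identify $G$ with the kernel $\{e,(12)(34),(13)(24),(14)(23)\}$. This needs the transitivity of $G$: other Klein four-subgroups such as $\langle(12),(34)\rangle$ are intransitive and would give $|\pi(G)|=2$. Likewise, your remark that checking one representative per conjugacy class suffices rests on the fact that a transitive subgroup of $S_4$ is determined up to conjugacy by its isomorphism type.
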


In the proof of Lemma \ref{newlemVC}, we will follow some arguments in Bennett's work \cite{Ben} on the number of solutions of cubic Thue equations. In particular, the following simple observation is extremely useful.

\begin{lemma}[\cite{Ben}, Lemma 2.2]\label{simplechangevar}
If $F$ is a reducible binary cubic form, then equation $F(U, V) = 1$ has a solution in integers $U$ and $V$ precisely when $F$ is equivalent to a
form $G$ satisfying
\begin{equation}\label{eq2.1bennett}
G(U, V) = U(U^2 + aUV + bV^2),
\end{equation}
where $a, b \in \mathbb{Z}$.
\end{lemma}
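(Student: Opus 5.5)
The plan is to prove both directions directly. The easy direction ("$\Leftarrow$") uses the transfer of solutions under $\mathrm{GL}_2(\mathbb{Z})$ already noted in Section 2. The substantive direction ("$\Rightarrow$") uses a change of variables that moves a given solution to $(1,0)$ and makes the linear factor of $F$ equal to $U$.

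\emph{The easy direction.} Suppose $F$ is equivalent to $G(U,V)=U(U^2+aUV+bV^2)$, that is, $G=\pm F_A$ with $A\in\mathrm{GL}_2(\mathbb{Z})$. Then $G(1,0)=1$, so $F_A(1,0)=\pm1$. Put $(u_0,v_0)=A(1,0)^{T}$. If the sign is $-$, use $(-u_0,-v_0)$ instead; since $F$ is cubic this flips the sign. In either case we get an integral solution of $F(U,V)=1$.

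\emph{Factoring $F$.} For the converse, first use Gauss's lemma. Since $F\in\mathbb{Z}[U,V]$ is reducible, it has a factor over $\mathbb{Q}$ of degree $1$ (the other factor has degree $2$, or both degrees are $1$ and $2$ after grouping). Scaling it to a primitive integral linear form $L(U,V)=rU+sV$ with $\gcd(r,s)=1$, the cofactor $Q(U,V)$ is an integral binary quadratic form with $F=LQ$.

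\emph{Using the solution.} Let $(u_0,v_0)\in\mathbb{Z}^2$ satisfy $F(u_0,v_0)=1$. Then $L(u_0,v_0)\,Q(u_0,v_0)=1$ with both factors integers, so $L(u_0,v_0)=Q(u_0,v_0)=\varepsilon\in\{\pm1\}$. Moreover $\gcd(u_0,v_0)=1$, since any common divisor $g$ would give $g^3\mid F(u_0,v_0)=1$.

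\emph{The change of variables.} Complete $(u_0,v_0)^T$ to a matrix $A=\begin{pmatrix}u_0 & w\\ v_0 & t\end{pmatrix}\in\mathrm{SL}_2(\mathbb{Z})$. Replacing the second column $(w,t)$ by $(w+ku_0,\,t+kv_0)$ keeps $A$ in $\mathrm{SL}_2(\mathbb{Z})$ and changes $L(w,t)$ to $L(w,t)+k\varepsilon$. Choosing $k=-\varepsilon L(w,t)$ makes $L_A(0,1)=0$, so $L_A(U,V)=\varepsilon U$.

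\emph{Reading off the form.} Now $F_A=L_AQ_A=\varepsilon U\,Q_A(U,V)$ with $Q_A(1,0)=Q(u_0,v_0)=\varepsilon$. Hence $Q_A=\varepsilon U^2+a'UV+b'V^2$, and
$$F_A(U,V)=U\bigl(U^2+\varepsilon a'UV+\varepsilon b'V^2\bigr),$$
which has the shape \eqref{eq2.1bennett} with $a=\varepsilon a'$ and $b=\varepsilon b'$.

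The only step needing care is the column adjustment that kills $L_A(0,1)$. It works because the value $L(u_0,v_0)$ is a unit, so shifts by multiples of the first column reach every residue. The remaining steps are routine bookkeeping.
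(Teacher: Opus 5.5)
Your argument is correct and complete. Gauss's lemma gives an integral factorization $F=LQ$ with $L$ a primitive linear form, and the solution forces $L(u_0,v_0)=Q(u_0,v_0)=\pm1$. Your column shear inside $\mathrm{SL}_2(\mathbb{Z})$ then sends the solution to $(1,0)$ and $L$ to $\pm U$ at the same time, which produces exactly the shape $U(U^2+aUV+bV^2)$.

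The paper gives no proof of its own here; it simply quotes the lemma from Bennett. So there is nothing to compare against, and your self-contained elementary argument supplies the missing justification.
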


\subsection*{Proof of Lemma \ref{V4case}.}

Suppose $K$ is a $V_4$  quartic number field. We show that any two-times monogenic order in $O_K$ is of type~I. Let $\xi$ be an algebraic integer such that $K = \mathbb{Q}(\xi)$.
 By part (i) of Proposition \ref{KWlem}, the resolvent cubic forms of $\xi$ is  of the shape
 $$ 
F(U, V) = (U- z_1V) (U- z_2 V) (U - z_3 V), \,  \textrm{with}\,  z_1, z_2, z_3 \in \mathbb{Z}.
$$
 Since the discriminant of $F(U, V)$ is not zero, by Lemma  \ref{discremark},  the three lines 
 $$U- z_i V =1, \quad i \in \{ 1, 2, 3\},
 $$
  are distinct and cannot have more than one intersection. Therefore, the cubic equation
$$
F(U, V) = 1
$$
has exactly one solution, namely $(1, 0)$.  By  Proposition \ref{GPPmodified} and Corollary \ref{corTypeI}, if $\mathbb{Z}[\xi$] is two-times monogenic. then it must be of type~I.
\qed

\subsection*{Proof of Lemma \ref{newlemVC}.}

Now we consider the case where $K$ is either a $C_4$ or a $D_8$ quartic number field.
 Suppose $O= \mathbb{Z}[\xi]$ is a two-times monogenic order in $O_K$ that is not of type~I.  By Proposition \ref{KWlem} and Lemma \ref{simplechangevar}, the cubic resolvent  form 
$F(U, V) $ of the quartic algebraic integer $\xi$ is equivalent to 
$$G(U, V) = U(U^2 + aUV + bV^2),
$$
for some  $a, b \in \mathbb{Z}$, so that the quadratic form $U^2 + aUV + bV^2$ is irreducuble over $\mathbb{Q}$.
We have 
\begin{equation}\label{eq2.2bennett}
D(F) =  D(G) = b^2 (a^2 - 4b),
\end{equation}
where $D(F)$ is  the discriminant of $F(U, V)$.
Since, by Lemma  \ref{discremark},  $D(F) \neq 0$, we have $b \neq 0$.
If   $O= \mathbb{Z}[\xi]$ is a two-times monogenic order in $O_K$ that is not of type~I, then  by Proposition \ref{GPPmodified} and Corollary \ref{corTypeI},  the equation 
\begin{equation}\label{G=1}
G(U, V) = U(U^2 + aUV + bV^2) = 1
\end{equation}
must have an integral  solution $(u, v) \neq (1, 0)$.
We note that for $\xi \in O_K$, with $K = \mathbb{Q}(\xi)$, the roots of the cubic resolvent polynomial of $\xi$ belong to $\tilde{K}$, the Galois closure of $K$ (see \eqref{prod2}).
By the fundamental theorem of Galois theory, we know that when the Galois group of $\tilde{K}$ is isomorphic to  $C_4$, there is exactly one quadratic subfield of $\tilde{K}$. Further, when the Galois group of $\tilde{K}$ is isomorphic to  $D_8$,  there are $3$  quadratic subfields of $\tilde{K}$ (as the dihedral group $D_8$ has three subgroups of order $4$). 
Now assume that $L$ is a quadratic subfield of $\tilde{K}$, with fundamental discriminant $\Delta$. 
Next, we show that there can be at most finitely many (indeed very few, if any) quadratic forms $U^2+ a UV + bV^2$ that split in the quadratic field $L$ and for which the equation \eqref{G=1} has a non-trivial solution.

We apply some ideas from \cite[\S 2]{Ben}.
If $G(u,v) = 1$, with $u, v \in \mathbb{Z}$, then  $u = \pm 1$. 
In case $u = 1$, substituting into \eqref{G=1}, we find that
\begin{equation}\label{eq2.3bennett}
    b v^{2} + a v = 0, \quad \textrm{and} \quad v =-\frac{a}{b}.
\end{equation}
If $u = -1$, we get \begin{equation}\label{eq2.4bennett}
    b v^{2} + a v + 2 = 0 \quad \textrm{and} \quad  v = \frac{-a + \sqrt{a^{2}-8b}}{2b},\quad \textrm{or} \quad 
v = \frac{-a - \sqrt{a^{2}-8b}}{2b}.
\end{equation}

Assuming the equation \eqref{G=1} has an integral solution $(u, v) \neq (1, 0)$, 
we immediately conclude that the unit group of the ring of integers of  $L = \mathbb{Q}(\sqrt{\Delta})$ is not finite. Therefore, $\Delta  > 0$ and $\mathbb{Q}(\sqrt{\Delta})$  is a totally real subfield of $\tilde{K}$ whose ring of integers has a  unit group generated by a unique fundamental unit $\eta$, with 
$\eta> 1$. 
Let
\begin{equation}\label{fundamentalunit}
\eta =  e_1+ e_2\sqrt{\Delta} > 1, \, \,  \textrm{with}\,  e_1, e_2 \in  \frac{1}{2} \mathbb{Z}.
\end{equation}
We define
\begin{equation}\label{fundamentalunittilde}
\tilde{\eta} =  e_1 - e_2\sqrt{\Delta}.
\end{equation}
We have $$\tilde{\eta} = \pm \frac{1}{\eta},$$
 and 
 every unit with absolute value greater than $1$ in the ring of integers of $\mathbb{Q}(\sqrt{\Delta})$ is of the form $\pm \eta^n$, for some $n \geq 1$.
 Clearly,
 \begin{equation}\label{clearlimits}
 \lim_{n \to \infty} \left| \eta^n \right| = + \infty
  \quad \textrm{and} \quad  \lim_{n \to \infty}\left| {\tilde{\eta}}^n \right| = 0.
 \end{equation}
 Further, the identity $G(u, v) = 1$, for $u= \pm 1$, and $v \neq 0$ implies that 
$$
\left( u+v \frac{ a+ \sqrt{a^2-4b}}{2}\right) \left( u +v \frac{ a -\sqrt{a^2-4b}}{2}\right) = u.
$$
 This means $ u+v \frac{ a+ \sqrt{a^2-4b}}{2}$  and $u +v \frac{ a -\sqrt{a^2-4b}}{2}$ are non-trivial units in $\mathbb{Q}(\sqrt{\Delta})$, and 
 $$
\left\{  u+v \frac{ a+ \sqrt{a^2-4b}}{2}, \,  u +v \frac{ a -\sqrt{a^2-4b}}{2}\right\} = \{\eta^h, \tilde{\eta}^h\},
 $$
 or
 $$
\left\{  u+v \frac{ a+ \sqrt{a^2-4b}}{2}, \,  u +v \frac{ a -\sqrt{a^2-4b}}{2}\right\} = \{-\eta^h, -\tilde{\eta}^h\},
 $$
  for some positive integer $h$. Therefore,
$$
u + \frac{v a}{2} = \pm \frac{\eta^{h} + \tilde{\eta}^{h}}{2}, \, \quad  \frac{ v\sqrt{a^2-4b}}{2} = \pm \frac{\eta^{h} - \tilde{\eta}^{h}}{2},
$$
that is
\begin{eqnarray}\label{thismeans}\nonumber
u &= &\pm \frac{\eta^{h} + \tilde{\eta}^{h}}{2} -  \frac{v a}{2} = \pm \left( \frac{\eta^{h} + \tilde{\eta}^{h}}{2} -  a \frac{\eta^{h} - \tilde{\eta}^{h}}{2\sqrt{a^2-4b}} \right) \\
&=& \pm \frac{1}{2} \left( \eta^{h} \left(1 - \frac{a}{\sqrt{a^2-4b}} \right) + \tilde{\eta}^{h}\left(1 +\frac{a}{\sqrt{a^2-4b}} \right)  \right).
\end{eqnarray}
In the remainder of the proof, we show, in a fixed quadratic field $L$, that \eqref{thismeans}  can hold for at most finitely many integer values  $a, b$. To this end, we proceed to show $1 - \frac{a}{\sqrt{a^2-4b}}$ and $1 +\frac{a}{\sqrt{a^2-4b}}$ are bounded by explicit constants (see \eqref{bound1} and \eqref{bound2}), and since $u = \pm 1$, the identity \eqref{thismeans}, together with \eqref{clearlimits}, implies that the positive integer $h$ cannot be arbitrarily large.

First assume \eqref{eq2.3bennett} holds, and $(u, v) = (1, -\frac{a}{b})$. We have $|b| \leq |a|$ and
$$
a^2 \leq 2 (a^2 -4b) = a^2 + a^2 -8b, \quad \textrm{and} \quad 3 a^2 \geq 2(a^2 -4b),
$$
provided $\left| a \right| \geq 8$. 
This means 
\begin{equation}\label{bound1}
\frac{3}{2} \leq  \frac{a^2}{a^2-4b} \leq 2.
\end{equation}
Clearly, there are only finitely many pairs $(a, b)$, with $b|a$, and $|a| < 8$.

Now suppose \eqref{eq2.4bennett} holds. We may assume without loss of generality that $a \geq 0$, as the quadratic forms 
$$U^2 + a UV+ b V^2  \, \quad \textrm{and}\, \quad U^2  -a UV+ b V^2
$$
 are $\textrm{GL}_2(\mathbb{Z})$-equivalent. 
If $(u, v )=  \left(-1, \frac{-a + \sqrt{a^{2}-8b}}{2b}\right)$, then  $-a - \sqrt{a^{2}-8b }\in \mathbb{Z}$, and  via the straightforward identity
\begin{equation}\label{vias}
\frac{(-a + \sqrt{a^{2}-8b})( -a - \sqrt{a^{2}-8b} )}{2b} = 4,
\end{equation}
we conclude that
\begin{equation}\label{divides4}
-a - \sqrt{a^{2}-8b }\mid 4.
\end{equation}
One can easily verify that there are only finitely many integer values of $a \geq 0$ and  $b \neq 0$ for which \eqref{divides4} holds. They are
$$
(a , b) \in \{ (0, -2),  (1, -1), (3, 1)\}.
$$

If  $(u, v )=  \left(-1,  \frac{-a - \sqrt{a^{2}-8b}}{2b}\right)$, then we take
 $$\kappa = -a +\sqrt{a^{2}-8b } \in \mathbb{Z}.
 $$ 
 By \eqref{vias}, we have 
$$\kappa \mid 4 \quad \textrm{and} \quad \frac{-2a -\kappa }{2b}  = \frac{-a - \sqrt{a^{2}-8b}}{2b} =  \frac{4}{\kappa}.
$$
Therefore, 
\begin{equation}\label{a,b,kap}
a^2 - 4b = (a+\kappa)^2 + 4b = (a+\kappa)^2 +  \frac{\kappa (-2a -\kappa)}{2}  = a^2 +a \kappa +\frac{\kappa^2}{2}.
\end{equation}
Note that if  $a = 0$, then $b = -2$, and if $a=1$ then $b= -1$. For $a\geq 2$, since $\kappa \mid 4$,  we have
\begin{equation}\label{5a2}
5a^2 \geq  a^2 +a \kappa +\frac{\kappa^2}{2}.
\end{equation}
Further, the inequality
$$
2\left( a^2 +a \kappa +\frac{\kappa^2}{2} \right) = a^2 + (a+\kappa)^2 \geq a^2,
$$
together with \eqref{a,b,kap} and 
\eqref{5a2}, implies that
\begin{equation}\label{bound2}
\frac{1}{5} \leq \frac{a^2}{a^2 - 4b} \leq 2.
 \end{equation}

\qed


\section {Auxiliary Results; Applications of Finiteness Theorems for Unit Equations}

To complete the proof of Theorem~\ref{mainType}, we will use the machinery developed in  \cite{BEG}. In Section \ref{A4Section},
we will slightly modify some of the results in \cite{BEG}, which were proven for $S_4$-quartic number fields, and prove similar statements for $A_4$-quartic number fields.
First, we recall some useful definitions and general lemmas from  \cite{BEG}.

\begin{lemma}\cite[Lemma 6.1]{BEG}
\label{lem6.1BEG}
Let $\alpha, \beta \in O_{K}$ and suppose that 
$$\mathbb{Q}(\alpha)=\mathbb{Q}(\beta)=K, \quad \textrm{and} \quad \mathbb{Z}[\alpha]=\mathbb{Z}[\beta].
$$
 Then for distinct $ i,j \in \{1,2, 3, 4\}$, we have
\[
    \frac{\beta^{(i)} - \beta^{(j)}}{\alpha^{(i)} - \alpha^{(j)}} \in O^{\ast}_{\tilde{K}},
        \]
  where   $O^{\ast}_{\tilde{K}}$ is the unit group of the ring of integers of $\tilde{K}$.
\end{lemma}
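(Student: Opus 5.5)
The plan is to use that $\mathbb{Z}[\alpha]=\mathbb{Z}[\beta]$ forces each of $\alpha,\beta$ to be an integral polynomial in the other. The quotient in question is then shown to be an algebraic integer, and its reciprocal likewise. Concretely, since $\beta\in\mathbb{Z}[\alpha]$, write $\beta=g(\alpha)$ with $g\in\mathbb{Z}[T]$. Similarly, since $\alpha\in\mathbb{Z}[\beta]$, write $\alpha=h(\beta)$ with $h\in\mathbb{Z}[T]$. Because the embeddings $\sigma_i$ of $K$ into $\mathbb{C}$ are ring homomorphisms fixing $\mathbb{Z}$, we get $\beta^{(i)}=g(\alpha^{(i)})$ and $\alpha^{(i)}=h(\beta^{(i)})$ for every $i\in\{1,2,3,4\}$.

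For distinct $i,j$, the conjugates $\alpha^{(i)}\neq\alpha^{(j)}$ because $\alpha$ has degree $4$ (as $K=\mathbb{Q}(\alpha)$). Likewise $\beta^{(i)}\neq\beta^{(j)}$, so both differences are nonzero and the quotient is defined in $\tilde K$. Next, use the elementary identity that for $g(T)=\sum_k c_k T^k\in\mathbb{Z}[T]$,
\[
\frac{g(X)-g(Y)}{X-Y}=\sum_k c_k\sum_{m=0}^{k-1}X^mY^{k-1-m}\in\mathbb{Z}[X,Y].
\]
Substituting $X=\alpha^{(i)}$, $Y=\alpha^{(j)}$, which are algebraic integers in $\tilde K$, shows that
\[
\frac{\beta^{(i)}-\beta^{(j)}}{\alpha^{(i)}-\alpha^{(j)}}\in O_{\tilde K}.
\]
The same argument with $h$ in place of $g$, substituting the conjugates of $\beta$, shows that the reciprocal also lies in $O_{\tilde K}$. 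An element of $O_{\tilde K}$ whose inverse is in $O_{\tilde K}$ is a unit, which proves the claim.

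There is no real obstacle here. The only points to check are that the differences are nonzero, which follows from the degree assumption $\mathbb{Q}(\alpha)=\mathbb{Q}(\beta)=K$, and that all conjugates lie in $\tilde K$ and are integral, so that the divided-difference polynomial evaluates inside $O_{\tilde K}$.
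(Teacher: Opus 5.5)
Your proof is correct and complete: writing $\beta=g(\alpha)$ and $\alpha=h(\beta)$ with $g,h\in\mathbb{Z}[T]$, and noting that divided differences of integer polynomials are integer polynomials, shows that both the quotient and its inverse lie in $O_{\tilde K}$. The paper gives no proof of its own and only cites \cite[Lemma 6.1]{BEG}, whose argument is essentially this same mutual-divisibility reasoning.
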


Let ${O} = \mathbb{Z}[\alpha] =  \mathbb{Z}[\beta]$. For distinct $i, j \in \{1, 2, 3, 4\}$, we define
\begin{equation}\label{defofepsilon}
\varepsilon_{i j} =  \frac{\beta^{(i)} - \beta^{(j)}}{\alpha^{(i)} - \alpha^{(j)}}.
\end{equation}
We note that $\varepsilon_{ij} = \varepsilon_{ji}$, and by Lemma \ref{lem6.1BEG}, $\varepsilon_{i j} \in O_{\tilde{K}}^{\ast}$.
Moreover,   $\mathbb{Z}[\alpha] = \mathbb{Z}[\beta]$ implies that $I(\alpha) = \pm I(\beta)$, by \eqref{indexequal}. We obtain
\begin{equation}\label{epsilonijpm1}
\prod_{1\le  i< j \le  4} \varepsilon_{i j} = \pm 1.
\end{equation}
We also have
\begin{equation}\label{eq8.3BEG}
\frac{\varepsilon_{ik}}{\varepsilon_{jk}} = 
\left(\frac{\beta^{(i)}-\beta^{(j)} }
     {\beta^{(i)}-\beta^{(k)} }\right)
\left( \frac{\varepsilon_{ij}}{\varepsilon_{jk}} - 1 \right),
\end{equation}
 for any three distinct indices $i,j,k  \in \{1, 2, 3, 4\}$.

Two algebraic numbers  $\gamma_1$ and $\gamma_2$ are called  $\mathbb{Q}$-equivalent if 
$$
\gamma_{2} =   q_1\gamma_1 + q_2, \quad \textrm{with} \, q_1, q_2 \in \mathbb{Q}, \, q_1\neq 0.
$$
If $q_1, q_2 \in \mathbb{Z}$, with  $q_1\neq 0$, $\gamma_1$ and $\gamma_2$ are called  $\mathbb{Z}$-equivalent.
The next lemma demonstrates a simple relation that determines $\mathbb {Z}$-equivalence among the monogenizers of an order.
\begin{lemma}\cite[Lemma 6.3]{BEG}\label{lem6.3BEG}
For $\alpha \in K$, with $K =\mathbb{Q}(\alpha)$, we define the pair
\begin{equation}\label{eq6.1BEG}
\tau(\alpha) = \left(\frac{\alpha_3 - \alpha_1}{\alpha_2 - \alpha_1}, \frac{\alpha_4 - \alpha_1}{\alpha_2 - \alpha_1} \right).
\end{equation}
\begin{enumerate}[(i)]
    \item Let $\alpha, \beta$ with $\mathbb{Q}(\alpha)=\mathbb{Q}(\beta)=K$.  
    Then $\alpha, \beta$ are $\mathbb{Q}$-equivalent if and only if 
    \[
    \tau(\alpha)=\tau(\beta).
    \]
   \item Let $\alpha, \beta \in O_K$ and suppose that $K = \mathbb{Q}(\alpha) = \mathbb{Q}(\beta)$ and $\mathbb{Z}[\alpha] = \mathbb{Z}[\beta]$. Then $\alpha$ and $\beta$ are $\mathbb{Z}$-equivalent if and only if $\tau(\alpha) = \tau(\beta)$.
    \end{enumerate}
\end{lemma}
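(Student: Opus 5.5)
The statement to prove is Lemma \ref{lem6.3BEG}: (i) $\alpha,\beta$ are $\mathbb{Q}$-equivalent iff $\tau(\alpha)=\tau(\beta)$; (ii) under $\mathbb{Z}[\alpha]=\mathbb{Z}[\beta]$, they are $\mathbb{Z}$-equivalent iff $\tau(\alpha)=\tau(\beta)$. The plan is to prove (i) by linear algebra over $\mathbb{Q}$ and then derive (ii) from (i) using that both $\alpha$ and $\beta$ generate $\mathbb{Z}[\alpha]$.

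For (i), one direction is immediate. If $\beta=q_1\alpha+q_2$ with $q_1\neq 0$ rational, then applying each embedding gives $\beta_i=q_1\alpha_i+q_2$. Every difference $\beta_i-\beta_j$ is therefore $q_1(\alpha_i-\alpha_j)$. In each coordinate of $\tau$ the factor $q_1$ cancels and the translation $q_2$ drops out, so $\tau(\beta)=\tau(\alpha)$.

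For the converse, assume $\tau(\alpha)=\tau(\beta)$. Set
\[
q_1=\frac{\beta_2-\beta_1}{\alpha_2-\alpha_1},\qquad q_2=\beta_1-q_1\alpha_1 ,
\]
which is legitimate because $\alpha$ has degree $4$, so its conjugates are distinct. Equality of the two coordinates of $\tau$ gives $\beta_k-\beta_1=q_1(\alpha_k-\alpha_1)$ for $k=3,4$, and the case $k=2$ holds by definition. Hence $\beta_k=q_1\alpha_k+q_2$ for all four indices.

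It remains to see that $q_1,q_2\in\mathbb{Q}$.
\begin{enumerate}[(a)]
\item Since $1,\alpha$ are linearly independent and $K=\mathbb{Q}(\alpha)$ has degree $4\ge 2$, write $\beta=\sum_{m=0}^{3}c_m\alpha^m$ with $c_m\in\mathbb{Q}$.
\item Then $\beta_k=\sum c_m\alpha_k^m$ for each embedding. So the polynomial $\sum c_mT^m-q_1T-q_2$, of degree at most $3$, vanishes at the four distinct points $\alpha_k$.
\item It is therefore identically zero: $c_0=q_2$, $c_1=q_1$, $c_2=c_3=0$, and in particular $q_1,q_2\in\mathbb{Q}$.
\item Finally $q_1\neq0$, because $\beta$ generates $K$ and so has distinct conjugates.
\end{enumerate}

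For (ii), $\mathbb{Z}$-equivalence implies $\mathbb{Q}$-equivalence, so one direction follows from (i). Conversely, if $\tau(\alpha)=\tau(\beta)$, then by (i) $\beta=q_1\alpha+q_2$ with rational $q_1\neq 0$ and $q_2$.
\begin{enumerate}[(a)]
\item Since $\beta\in\mathbb{Z}[\alpha]$, which has $\mathbb{Z}$-basis $1,\alpha,\alpha^2,\alpha^3$, uniqueness of coordinates forces $q_1,q_2\in\mathbb{Z}$.
\item Symmetrically, $\alpha=q_1^{-1}\beta-q_1^{-1}q_2\in\mathbb{Z}[\beta]$, with basis $1,\beta,\beta^2,\beta^3$, so $q_1^{-1}\in\mathbb{Z}$.
\item Hence $q_1=\pm1$, which is exactly $\mathbb{Z}$-equivalence in the sense of the introduction.
\end{enumerate}

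The only point needing care is the rationality of $q_1,q_2$ in (i). The interpolation argument in degree $\le 3$ handles it, and it uses the distinctness of the conjugates of $\alpha$ in an essential way. The rest is bookkeeping.
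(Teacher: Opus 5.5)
Your proof is correct and complete. The paper gives no proof of this lemma; it imports it from \cite{BEG}. So there is no in-paper argument to match against, and your skeleton is the standard one: build $q_1,q_2$ from the conjugates, prove they are rational, then use $\mathbb{Z}[\alpha]=\mathbb{Z}[\beta]$ to force $q_1=\pm1$ and $q_2\in\mathbb{Z}$.

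The only genuine divergence is the rationality step. The usual route is Galois invariance, and the paper itself uses it in Case~(ii) of the proof of Lemma~\ref{newlemA4} to conclude $M\in\mathrm{GL}_2(\mathbb{Q})$. Every $\sigma\in\mathrm{Gal}(\tilde{K}/\mathbb{Q})$ permutes the $\alpha_k$ and the $\beta_k$ by the same permutation, so $\beta_k=\sigma(q_1)\alpha_k+\sigma(q_2)$ for all $k$. Distinctness of the $\alpha_k$ then gives $\sigma(q_1)=q_1$ and $\sigma(q_2)=q_2$.

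Your interpolation argument avoids the normal closure entirely. It also shows slightly more: $q_2,q_1$ are exactly the coordinates of $\beta$ in the power basis $1,\alpha,\alpha^2,\alpha^3$, which makes your step (ii)(a) immediate. The Galois argument, for its part, carries over verbatim to the M\"obius/cross-ratio situation needed later in the paper.

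For (ii) there is also an alternative: $D(\beta)=q_1^{12}D(\alpha)$ together with \eqref{indexequal} gives $q_1=\pm1$, and then $q_2=\beta\mp\alpha\in O_K\cap\mathbb{Q}=\mathbb{Z}$. Your coordinate argument is equally valid.

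A cosmetic point: in (i)(a) the fact you actually need is that $1,\alpha,\alpha^2,\alpha^3$ is a $\mathbb{Q}$-basis of $K$. The remark that $1,\alpha$ are linearly independent is beside the point.
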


\begin{lemma}\cite[Lemma 6.4]{BEG}\label{lem6.4BEG}
Let $\alpha,\beta \in O_K$ with 
$$\mathbb{Q}(\alpha)=\mathbb{Q}(\beta)=K, \quad  \textrm{and} \quad \mathbb{Z}[\alpha]=\mathbb{Z}[\beta].
$$
Suppose there is a matrix
\[
\begin{pmatrix}
q_1 & q_2 \\
q_3 & q_4
\end{pmatrix} \in \textrm{GL}_2(\mathbb{Q})
\]
such that
\begin{equation*}\label{eq:6.3}
\beta=\frac{q_1\alpha + q_2}{q_3\alpha + q_4}, \qquad q_3 \neq 0,
\end{equation*}
and
\begin{equation*}\label{eq:6.4}
q_1,q_2,q_3,q_4 \in \mathbb{Z} , \qquad \gcd(q_1,q_2,q_3,q_4)=1.
\end{equation*}
Then
\[
\begin{pmatrix}
q_1 & q_2 \\
q_3 & q_4
\end{pmatrix} \in \textrm{GL}_2(\mathbb{Z}).
\]
\end{lemma}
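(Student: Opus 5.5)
The statement to prove is Lemma~\ref{lem6.4BEG}: if $\beta=(q_1\alpha+q_2)/(q_3\alpha+q_4)$ with a primitive integer matrix $M=\begin{pmatrix} q_1&q_2\\ q_3&q_4\end{pmatrix}$ of nonzero determinant, and $\mathbb{Z}[\alpha]=\mathbb{Z}[\beta]$, then $\det M=\pm1$. The plan is to compute the differences of conjugates of $\beta$ through the Möbius transformation, then compare discriminants via \eqref{epsilonijpm1}, or equivalently via the units $\varepsilon_{ij}$ of Lemma~\ref{lem6.1BEG}.

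The first step is the standard identity
$$
\beta^{(i)}-\beta^{(j)}=\frac{(q_1q_4-q_2q_3)\,(\alpha^{(i)}-\alpha^{(j)})}{(q_3\alpha^{(i)}+q_4)(q_3\alpha^{(j)}+q_4)}.
$$
Writing $\delta=\det M$ and $\gamma=q_3\alpha+q_4\in O_K$, which is nonzero since $\alpha\notin\mathbb{Q}$, this gives
$$
\varepsilon_{ij}=\frac{\delta}{\gamma^{(i)}\gamma^{(j)}}.
$$
Taking the product over the six pairs $i<j$, each index appears three times, so
$$
\prod_{i<j}\varepsilon_{ij}=\frac{\delta^6}{N_{K/\mathbb{Q}}(\gamma)^3}=\pm1.
$$
Hence $\delta^6=\pm N(\gamma)^3$, and therefore $|\delta|^2=|N(\gamma)|$.

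The second step shows that $\gamma$ is a unit up to a factor dividing $\delta$. Each $\varepsilon_{ij}$ is a unit, so $\gamma^{(i)}\gamma^{(j)}$ and $\delta$ generate the same ideal of $O_{\tilde K}$ for every pair $i\neq j$. Using three pairs, $(\gamma^{(1)})^2=\dfrac{(\gamma^{(1)}\gamma^{(2)})(\gamma^{(1)}\gamma^{(3)})}{\gamma^{(2)}\gamma^{(3)}}$, so $(\gamma^{(1)})^2$ and $\delta$ generate the same ideal. Thus $\gamma^2=\delta\cdot u$ for some unit $u$ of $K$; this is an equality of ideals in $O_K$, obtained by intersecting down from $O_{\tilde K}$. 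The same conclusion follows from the norm equation above.

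The key step is to use integrality. We have $\gamma\beta=q_1\alpha+q_2$, i.e. $q_3\alpha\beta+q_4\beta-q_1\alpha-q_2=0$. Both $\beta$ and $\alpha\beta$ lie in $\mathbb{Z}[\alpha]$, with basis $1,\alpha,\alpha^2,\alpha^3$. Suppose a prime $p$ divides $\delta$. Then $p$ divides the ideal $\gamma^2 O_K$, so $p$ divides $\gamma^2$ up to units, and the plan is to derive a contradiction with primitivity of $M$. A cleaner route works mod $p$. Over $\mathbb{F}_p$, the matrix $\bar M$ has rank exactly $1$ (rank $0$ is excluded by primitivity). Hence there is $(s,t)\in\mathbb{Z}^2$, not both divisible by $p$, with
$$
s(q_1\alpha+q_2)+t(q_3\alpha+q_4)\equiv 0 \pmod p
$$
as a polynomial in $\alpha$. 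That is, $s\gamma\beta+t\gamma\equiv0$, i.e. $\gamma(s\beta+t)\in pO$. Now reduce modulo $p$ in the ring $O/pO\cong\mathbb{F}_p[T]/(\bar P(T))$, where $P$ is the minimal polynomial of $\alpha$. Because $\mathbb{Z}[\beta]=O$, the element $s\beta+t$ generates $O/pO$ as an $\mathbb{F}_p$-algebra when $s\not\equiv0$, and $\gamma\equiv \bar q_3\alpha+\bar q_4$ likewise.

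The main obstacle is this last mod-$p$ contradiction, namely that the product of two nonconstant linear expressions in two generators cannot vanish in $O/pO$. The approach I would try first is a dimension count. Since the image $\bar\gamma$ has rank 1 relations with $\bar\gamma\bar\beta$, multiplication by $\bar\gamma$ maps $O/pO$ into the span of $\bar\gamma$ and $\bar\gamma\bar\beta$, which are dependent. Because $\bar\beta$ generates $O/pO$, $\bar\gamma\cdot O/pO=\bar\gamma\,\mathbb{F}_p[\bar\beta]$ then has dimension at most $1$. On the other hand, $\bar\gamma=\bar q_3\alpha+\bar q_4$ is a nonzero linear polynomial in $\alpha$ in $\mathbb{F}_p[T]/(\bar P)$, whose annihilator has dimension at most $1$, so $\bar\gamma\cdot O/pO$ has dimension at least $3$. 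This is the required contradiction, in the case $\bar\gamma\neq0$. If instead $\bar\gamma=0$, i.e. $p\mid q_3,q_4$, I would run the same argument with $\gamma\beta=q_1\alpha+q_2$, which is nonzero mod $p$ by primitivity, using the inverse transformation in place of $M$. Hence no prime divides $\delta$, and $M\in\mathrm{GL}_2(\mathbb{Z})$.
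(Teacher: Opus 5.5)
The paper gives no proof of this statement; it quotes it from \cite[Lemma 6.4]{BEG}, so there is no in-paper argument to compare yours with. Judged on its own, your proof is correct. Its final mod-$p$ step is by itself a complete proof. For a prime $p\mid\delta$, the reduction $\bar M$ has rank one, which gives $\bar\gamma(s\bar\beta+t)=0$ in $O/pO\cong\mathbb{F}_p[T]/(\bar P)$. Since $\bar\beta$ generates this algebra, $\bar\gamma\cdot(O/pO)=\mathbb{F}_p\bar\gamma$ has dimension at most $1$. On the other hand, multiplication by a nonzero polynomial of degree at most $1$ has kernel of dimension at most $1$, so its image has dimension at least $3$, a contradiction.

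Your first two steps are correct but are never used afterwards. These are the formula $\varepsilon_{ij}=\delta/(\gamma^{(i)}\gamma^{(j)})$, the identity $|N_{K/\mathbb{Q}}(\gamma)|=\delta^2$, and $\gamma^2O_K=\delta O_K$. So Lemma~\ref{lem6.1BEG}, \eqref{epsilonijpm1} and the equality of discriminants can be dropped entirely.

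Two small points need tidying.
\begin{enumerate}
\item When $\bar\gamma\neq0$ you need $s\not\equiv0\pmod p$. This is automatic: $s\equiv0$ and $t\not\equiv0$ would force $\bar q_3=\bar q_4=0$.
\item When $\bar\gamma=0$, no inverse transformation is needed. Passing to the inverse matrix does not by itself put you in the good case when also $p\mid q_1$. Instead, $\gamma\in pO$ gives $q_1\alpha+q_2=\gamma\beta\in pO$. Since $1,\alpha$ belong to a $\mathbb{Z}$-basis of $O$, this forces $p\mid q_1$ and $p\mid q_2$, contradicting primitivity.
\end{enumerate}

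What your route buys is an elementary, unit-free argument. It works verbatim in any degree $n\ge3$, where the count becomes $\ge n-1$ versus $\le 1$, and it does not use $q_3\neq0$. The degree condition is genuinely needed: in $\mathbb{Z}[i]$ one has $i=(i-1)/(i+1)$, with a primitive matrix of determinant $2$.
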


\begin{lemma}\cite[Lemma 7.2]{BEG}\label{lem7.2BEG}
Let $\mathcal{C}$ be a $\mathbb{Q}$-equivalence class in $K$. Then the set of 
$\beta$ such that 
\[
\beta \in O_K \cap \mathcal{C},\qquad \mathbb{Q}(\beta)=K,\qquad \mathbb{Z}[\beta]\ \text{is two times monogenic}
\]
is contained in a union of at most finitely many $\mathbb{Z}$-equivalence classes.
\end{lemma}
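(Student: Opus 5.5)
The plan is to show that the monogenizers $\alpha$ attached to such $\beta$ fall into finitely many $\mathbb{Q}$-classes, and then to use the integrality of the polynomial relation between $\alpha$ and $\beta$ to bound $\beta$ up to $\mathbb{Z}$-equivalence.

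Fix $\beta_0\in\mathcal{C}$ with $\mathbb{Q}(\beta_0)=K$. Every $\beta\in\mathcal{C}$ has the form $\beta=q\beta_0+c$ with $q\in\mathbb{Q}^*$ and $c\in\mathbb{Q}$. By Lemma \ref{lem6.3BEG}(i), $\tau(\beta)=\tau(\beta_0)$ is a fixed pair. Hence the ratios $(\beta^{(i)}-\beta^{(j)})/(\beta^{(i)}-\beta^{(k)})$ are fixed nonzero elements $c_{ijk}\in\tilde K$ depending only on $\mathcal{C}$. Let $\mathbb{Z}[\beta]$ be two-times monogenic, say $\mathbb{Z}[\beta]=\mathbb{Z}[\alpha]$ with $\alpha$ not $\mathbb{Z}$-equivalent to $\beta$. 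Then $\alpha$ is not $\mathbb{Q}$-equivalent to $\beta$ either, by Lemma \ref{lem6.3BEG}(ii) together with part (i).

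\emph{Step 1.} Form the units $\varepsilon_{ij}$ of \eqref{defofepsilon}, which are units by Lemma \ref{lem6.1BEG}. The identity \eqref{eq8.3BEG} becomes
\[
\frac{\varepsilon_{ik}}{\varepsilon_{jk}}-c_{ijk}\frac{\varepsilon_{ij}}{\varepsilon_{jk}}=-c_{ijk},
\]
which is a two-term unit equation $x+c\,y=c'$ in $O_{\tilde K}^*$ with coefficients fixed by $\mathcal{C}$. By the finiteness theorem for two-variable unit equations (Siegel, Mahler, Lang), it has finitely many solutions. Running over all triples $i,j,k$, the ratios $\varepsilon_{ij}/\varepsilon_{jk}$ therefore take only finitely many values. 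Since
\[
\frac{\alpha^{(i)}-\alpha^{(j)}}{\alpha^{(i)}-\alpha^{(k)}}=c_{ijk}\,\frac{\varepsilon_{ik}}{\varepsilon_{ij}},
\]
the pair $\tau(\alpha)$ lies in a finite set. By Lemma \ref{lem6.3BEG}(i), $\alpha$ then lies in one of finitely many $\mathbb{Q}$-classes, with fixed representatives $\alpha_0$, and we write $\alpha=r\alpha_0+d$.

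\emph{Step 2.} Consider a fixed pair $(\alpha_0,\beta_0)$. Every $\varepsilon_{ij}$ equals $(q/r)\gamma_{ij}$ with $\gamma_{ij}=(\beta_0^{(i)}-\beta_0^{(j)})/(\alpha_0^{(i)}-\alpha_0^{(j)})$ fixed. Then \eqref{epsilonijpm1} gives $(q/r)^6=\pm\prod\gamma_{ij}^{-1}$, so $t=q/r$ takes finitely many values. Next write $\alpha_0=h(\beta_0)$ with $h\in\mathbb{Q}[T]$ of degree $k\in\{2,3\}$; here $k\ge2$ because $\alpha_0$ is not $\mathbb{Q}$-equivalent to $\beta_0$. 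Then
\[
\alpha=r\,h\bigl((\beta-c)/q\bigr)+d .
\]
Because $\alpha\in\mathbb{Z}[\beta]$ and $1,\beta,\beta^2,\beta^3$ is a $\mathbb{Z}$-basis of $\mathbb{Z}[\beta]$, the leading coefficient $r h_k/q^k=h_k/(t\,q^{k-1})$ must be an integer. This bounds the numerator of $q$. Symmetrically, writing $\beta_0=g(\alpha_0)$ and using $\beta\in\mathbb{Z}[\alpha]$ bounds the numerator of $r$, hence that of $q=tr$.

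Finally, $\beta=q\beta_0+c\in O_K$ and $\beta_0$ is fixed, so the denominators of $q$ and of $c$ are bounded: express $\beta$ in a $\mathbb{Z}$-basis of $O_K$ and compare with the coordinates of $\beta_0$ and $1$. Hence $q$ lies in a finite set, and $c$ lies in a finite set modulo $\mathbb{Z}$. These finitely many choices give finitely many $\mathbb{Z}$-equivalence classes of $\beta$.

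The main obstacle is Step 2. The unit-equation argument of Step 1 only controls $\alpha$ and $\beta$ up to $\mathbb{Q}$-equivalence, and scale information enters only through integrality. I would first attempt the leading-coefficient argument above. If the denominators of the coefficients of $h$ make that bookkeeping awkward, I would fall back on comparing the discriminants $D(\beta)=q^{12}D(\beta_0)$ and $D(\alpha)=r^{12}D(\alpha_0)$ together with the explicit index relation between $\mathbb{Z}[\alpha]$ and $\mathbb{Z}[\beta]$.
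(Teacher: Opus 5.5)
The paper does not prove this lemma; it quotes it from \cite{BEG}. So your argument has to stand on its own. Its architecture is sound. A two-variable unit equation confines $\tau(\alpha)$ to a finite set. The product formula \eqref{epsilonijpm1} then fixes $t=q/r$ up to sign. Finally, integrality of the leading coefficient of $\alpha$ as a polynomial in $\beta$, together with $\beta\in O_K$, confines $q$ and $c \bmod \mathbb{Z}$. However, Step 1 as written rests on a false identity. You took \eqref{eq8.3BEG} as printed, and it cannot hold in general: for $\beta=\alpha+1$ every $\varepsilon_{ij}=1$, so its left side is $1$ and its right side is $0$. The valid relation of that shape comes from additivity of the $\beta$-differences, and it is the one from which \eqref{eq8.4BEG} follows by multiplying over the triples $(i,j,k),(i,k,l),(i,l,j)$. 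It reads
\[
\frac{\varepsilon_{ik}}{\varepsilon_{jk}}-1=\frac{\alpha^{(i)}-\alpha^{(j)}}{\alpha^{(i)}-\alpha^{(k)}}\Bigl(\frac{\varepsilon_{ij}}{\varepsilon_{jk}}-1\Bigr).
\]
Its coefficient depends on $\tau(\alpha)$, which is exactly the quantity you are trying to control. So it does not give a unit equation whose coefficients are fixed by $\mathcal{C}$.

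The repair is to use additivity of the $\alpha$-differences instead. Substitute $\alpha^{(a)}-\alpha^{(b)}=(\beta^{(a)}-\beta^{(b)})/\varepsilon_{ab}$ into $(\alpha^{(i)}-\alpha^{(j)})+(\alpha^{(j)}-\alpha^{(k)})=\alpha^{(i)}-\alpha^{(k)}$ and divide by the right-hand side. This gives
\[
c_{ijk}\,\frac{\varepsilon_{ik}}{\varepsilon_{ij}}+\bigl(1-c_{ijk}\bigr)\frac{\varepsilon_{ik}}{\varepsilon_{jk}}=1, \qquad 1-c_{ijk}=\frac{\beta^{(j)}-\beta^{(k)}}{\beta^{(i)}-\beta^{(k)}}.
\]
Both coefficients are nonzero and depend only on $\mathcal{C}$. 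The Siegel--Lang theorem then leaves finitely many values for $\varepsilon_{ik}/\varepsilon_{ij}$. Your formula $(\alpha^{(i)}-\alpha^{(j)})/(\alpha^{(i)}-\alpha^{(k)})=c_{ijk}\,\varepsilon_{ik}/\varepsilon_{ij}$ then finishes Step 1 as intended.

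Step 2 is correct. Write $q=a/b$ in lowest terms. Integrality of $h_k/(t\,q^{k-1})$ forces $a^{k-1}$ to divide the numerator of $h_k/t$. That quantity ranges over a finite set, so the numerator of $q$ is bounded directly, and the ``symmetric'' argument with $\beta_0=g(\alpha_0)$ is redundant. The denominator bound from $\beta\in O_K$ finishes the proof: use a $\mathbb{Z}$-basis of $O_K$ containing $1$, and note that $\beta_0\notin\mathbb{Q}$. You do not need the discriminant fallback. With this one substitution in Step 1, you have a complete, self-contained proof that uses only the two-variable unit equation theorem and integrality.
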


We end this section with a non-trivial application of the theory of unit equations. The main tools in the highly technical proof of the following proposition are finiteness results on polynomial unit equations.

\begin{prop} \cite[Proposition 8.1]{BEG}\label{prop8.1BEG}
Let $O_{\tilde{K}}^{*}$ be the unit group of the number field $\tilde{K}$.
Consider  the  equation \begin{equation}\label{eq8.5BEG}
(x_1 - 1)(x_2 - 1)(x_3 - 1) = (y_1 - 1)(y_2 - 1)(y_3 - 1)
\end{equation}
in  variables $x_1, x_2, x_3, y_1, y_2, y_3$.
There is a finite subset $S \subseteq O_K^{*}$ with $1 \in S$ such that 
for every solution $(x_1,x_2,  x_3,y_1,y_2 ,y_3) \in {O_{\tilde{K}}^{*}}^{6}$  of  \eqref{eq8.5BEG}
at least one of the following holds.
\begin{enumerate}
    \item[(i)] 
    At least one of $x_1, x_2,  x_3, y_1, y_2, y_3$ belongs to $S$.
    
    \item[(ii)] 
    There exist $\eta_1, \eta_2, \eta_3 \in \{\pm 1\}$ such that 
    $(y_1, y_2, y_3)$ is a permutation of 
    \[
        \left( x_1^{\eta_1},\, x_2^{\eta_2},\, x_3^{\eta_3} \right).
    \]
    \item[(iii)] 
One of the numbers in 
\[
\{\, x_i x_j,\; x_i/x_j,\; y_i y_j,\; y_i/y_j : 1 \le i < j \le 3 \,\}
\]
is equal to either $-1$ or to a primitive third root of unity.
\end{enumerate}
\end{prop}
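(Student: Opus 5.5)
The plan is to reduce the identity \eqref{eq8.5BEG} to a linear unit equation in many terms and then invoke the finiteness theorem for such equations (Evertse–Schlickewei–Schmidt, or the earlier version in \cite{EG85, EGST}).

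\textbf{Expansion.} Expanding both sides, the constant terms $-1$ cancel and we obtain
\[
\sum_{\emptyset\neq I\subseteq\{1,2,3\}} (-1)^{3-|I|}\Bigl(\prod_{i\in I}x_i-\prod_{i\in I}y_i\Bigr)=0 .
\]
This is a vanishing sum of $14$ terms. Each term is $\pm$ a monomial in the unknowns, hence a unit of $O_{\tilde K}$.

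\textbf{Partition into vanishing subsums.} For any solution we choose a partition of these $14$ terms into minimal vanishing subsums, meaning subsums that vanish but have no proper vanishing subsum. There are only finitely many such partitions, so it suffices to treat one fixed partition $\mathcal P$. By the unit equation theorem there is a finite set $W\subset O_{\tilde K}^*$, depending only on $\tilde K$ and on the number of terms, with the following property: for any two terms $t,t'$ in the same block of $\mathcal P$, the ratio $t/t'$ lies in $W$. We then enlarge $W$ to a finite set $S$ that contains $1$, is closed under inverses, and contains all products and quotients of at most a bounded number of elements of $W$. This $S$ is the set in (i).

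\textbf{Combinatorial case analysis.} Each block of $\mathcal P$ has at least two terms, so each block yields multiplicative relations of the form $m/m'\in W$, where $m,m'$ are distinct monomials among
\[
x_i,\ x_ix_j,\ x_1x_2x_3,\ y_i,\ y_iy_j,\ y_1y_2y_3 .
\]
Such a relation has one of three shapes.
\begin{enumerate}
\item[(a)] It isolates a single variable, for example $x_1x_2/x_2 = x_1\in W$, or $x_1x_2x_3/(x_2x_3)$. Then we are in case (i).
\item[(b)] It gives $y_j = w\,x_i^{\pm1}$ with $w\in W$, or a relation of that type after combining two blocks.
\item[(c)] It relates same-side monomials, for example $x_1x_2/x_3\in W$.
\end{enumerate}
Relations of type (b) and (c) reduce the number of free variables. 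We substitute them back into \eqref{eq8.5BEG} and obtain a new polynomial unit equation in fewer variables with coefficients from the finite set $W$. We then repeat the argument on this equation, inducting on the number of free unit variables and enlarging $S$ at each step. The process ends in one of two ways. Either some variable is forced into a finite set, giving (i). Or all relations are of the form $y_{\pi(i)} = w_i x_i^{\eta_i}$, and comparing the surviving identity as a polynomial identity forces $w_i=1$, giving (ii). The leftover possibility is that the identity survives because of a cancellation such as $x_i x_j = -1$, $x_i = -x_j$, or $1+x+x^2=0$. This produces the exceptional relations in (iii), namely $-1$ or a primitive cube root of unity, which are exactly the roots of unity that can make a symmetric expression in $x_i,x_j$ degenerate.

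\textbf{Main obstacle.} I expect the hardest step to be the exhaustive bookkeeping over the partitions of $14$ terms. The difficulty is showing that the constants $w$ in the type (b) relations are forced to equal $1$ unless case (i) or (iii) occurs. My first attempt at this would be to rescale the variables by the $w$'s and compare the resulting identity term by term as polynomials in the remaining free units. Because those units are multiplicatively "generic", a polynomial identity in them must hold coefficientwise. I would justify this genericity by a second application of the unit equation theorem in the independent variables.
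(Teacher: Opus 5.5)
\textbf{There is a genuine gap: the classification of configurations where the identity survives is asserted, not proved, and your prediction of that outcome is wrong.}

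The paper does not prove this statement. It imports it from \cite[Proposition 8.1]{BEG}, remarking only that the proof there rests on finiteness theorems for unit equations. Your opening is sound: the $14$-term expansion, the partition into minimal vanishing subsums, and finiteness of non-degenerate projective solutions do give relations $m/m'\in W$.

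The iteration can be made rigorous, but not through ``genericity'' of units. The identity only has to hold at the given solution, so there is no genericity to appeal to. What drives the iteration is a dichotomy. After substituting the relations found so far, the collected Laurent polynomial either vanishes identically on the current coset or it does not. If it does not, another application of the unit equation theorem gives a new nontrivial character relation and lowers the number of free parameters. After at most six rounds, every solution lies in one of finitely many cosets $\mathbf c\,T$ of subtori of $\mathbb G_m^6$, with constants from a finite set, on which \eqref{eq8.5BEG} holds identically. Cosets with a constant coordinate give (i). What remains is to show that every other coset satisfies (ii) or (iii) identically, and your proposal asserts this rather than proves it.

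The endpoint you predict is also wrong, so the step cannot be filled in along the lines you indicate. Your list (a)--(c) omits mixed relations between an $x$-monomial and a $y$-monomial of different degrees, such as $y_1=w\,x_1x_2$ or $x_3=w\,y_2y_3$. A single ratio $m/m'\in W$ typically produces exactly these, and they do occur. For units $t,s$,
\[
(x_1,x_2,x_3,y_1,y_2,y_3)=(t,\,-t,\,s^2,\,t^2,\,-s,\,s)
\]
solves \eqref{eq8.5BEG}, since both sides equal $-(t^2-1)(s^2-1)$. For generic $t,s$, alternative (ii) fails. The surviving relations are $x_2=-x_1$, $y_1=-x_1x_2$, $y_3=-y_2$ and $x_3=-y_2y_3$, not $y_{\pi(i)}=w_ix_i^{\eta_i}$. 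So (iii) is the outcome of a whole class of identically satisfied configurations, not a stray cancellation.

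One workable way to do the classification is as follows.
\begin{enumerate}
\item[1.] On a coset, write $x_i=c_i\mathbf t^{a_i}$ and $y_j=d_j\mathbf t^{b_j}$ with all $a_i,b_j\neq 0$.
\item[2.] Compare the factorizations of both sides in the UFD $\overline{\mathbb Q}[\mathbf t^{\pm1}]$. There $\gamma\mathbf t^{a}-1$ is irreducible when $a$ is primitive, and two such factors are associate exactly when the monomials are equal or mutually inverse.
\item[3.] If all $a_i,b_j$ are primitive, the three factors on each side match one-to-one, which gives (ii).
\item[4.] If some exponent is $m\ge 2$ times a primitive vector, its factor splits as $\prod_{\zeta^m=1}(\zeta\gamma\mathbf t^{a}-1)$. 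A finite analysis of how these root-of-unity cosets can be regrouped among at most three coordinates per side should produce two coordinates on one side whose ratio or product has order $2$ or $3$, which is (iii).
\end{enumerate}
Some such argument is needed before the proposal proves the statement.
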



\section{$A_4$-quartic number fields}\label{A4Section}


We continue to assume that  $K = \mathbb{Q}(\alpha)$ is a 
quartic number field, and 
we denote by $\tilde{K}$ the normal closure of $K$ over $\mathbb{Q}$. Further, we will assume that 
$$G= \textrm{Gal}(\tilde{K}/\mathbb{Q}) \cong A_4.
$$
We denote the algebraic conjugates of $\alpha$ by $\alpha^{(1)}, \, \alpha^{(2)}, \, \alpha^{(3)},\, \alpha^{(4)}$.   Associated to each  $\sigma \in G$, there is a permutation in $A_4$, say $P_{\sigma}$,  
 viewed as a permutation of the
superscripts of $\alpha^{(1)}, \, \alpha^{(2)}, \, \alpha^{(3)},\, \alpha^{(4)}$. In what follows, sometimes by abuse of terminology, we speak of applying the $A_4$-permutation  $P_{\sigma}$ to an element of 
$\lambda \in \tilde{K}$ to mean applying
$\sigma$ to $\lambda$, that is, computing $\sigma(\lambda)$.

\begin{lemma}\label{shabnot1} Let $K$ be a quartic number field whose Galois closure has Galois group $A_4$.
Let $\alpha, \beta \in O_K$ and suppose that 
$$K = \mathbb{Q}(\alpha) = \mathbb{Q}(\beta)\, \quad \textrm{and}\, \quad \mathbb{Z}[\alpha] = \mathbb{Z}[\beta].
$$
Further, assume that $\alpha$ and $\beta$ are not $\mathbb{Z}$-equivalent. 
Let $i, j , k\in \{1, 2, 3, 4\}$ be distinct. We have $\varepsilon_{ki} \neq \varepsilon_{kj}$, where $\varepsilon_{ki}, \varepsilon_{kj}$ are defined in \eqref{defofepsilon} in terms of the algebraic conjugates of  $\alpha$ and $\beta$.
\end{lemma}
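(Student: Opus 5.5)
The plan is to argue by contradiction. Suppose that $\varepsilon_{ki} = \varepsilon_{kj}$ for some distinct $i,j,k$. I will use the Galois action to spread this equality to all the $\varepsilon_{mn}$. The common value then has to be a rational unit, and that forces $\alpha$ and $\beta$ to be $\mathbb{Z}$-equivalent.

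The basic tool is the compatibility of the Galois action with the definition \eqref{defofepsilon}. For $\sigma \in G = \textrm{Gal}(\tilde{K}/\mathbb{Q})$ with associated permutation $P_\sigma$, we have $\sigma(\alpha^{(m)}) = \alpha^{(P_\sigma(m))}$ and $\sigma(\beta^{(m)}) = \beta^{(P_\sigma(m))}$; the same permutation works for both because $\beta$ is a polynomial in $\alpha$ with rational coefficients. Hence
\[
\sigma(\varepsilon_{mn}) = \varepsilon_{P_\sigma(m) P_\sigma(n)} .
\]
Let $l$ be the fourth index. The $3$-cycle $(i\, j\, l)$ lies in $A_4$ and fixes $k$, so by our assumption on $G$ it equals $P_\sigma$ for some $\sigma\in G$. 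Applying $\sigma$ and then $\sigma^2$ to $\varepsilon_{ki} = \varepsilon_{kj}$ gives
\[
\varepsilon_{ki} = \varepsilon_{kj} = \varepsilon_{kl} =: \varepsilon .
\]
Thus $\beta^{(m)} - \beta^{(k)} = \varepsilon(\alpha^{(m)} - \alpha^{(k)})$ for every $m \neq k$. Subtracting two of these relations gives $\beta^{(m)} - \beta^{(n)} = \varepsilon(\alpha^{(m)} - \alpha^{(n)})$, so $\varepsilon_{mn} = \varepsilon$ for all distinct $m,n$.

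Since $G$ permutes the set $\{\varepsilon_{mn}\}$ and all its elements are equal to $\varepsilon$, the number $\varepsilon$ is fixed by $G$ and therefore lies in $\mathbb{Q}$. By Lemma \ref{lem6.1BEG}, $\varepsilon$ is a unit of $O_{\tilde{K}}$, and the only rational units are $\pm 1$. So $\varepsilon = \pm 1$.

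Now put $c = \beta^{(m)} \mp \alpha^{(m)}$. This does not depend on $m$, so it is fixed by $G$ and is rational. It is also the image of the algebraic integer $\beta \mp \alpha \in O_K$ under an embedding, hence $c \in \mathbb{Z}$. Then $\beta = \pm\alpha + c$, which contradicts the assumption that $\alpha$ and $\beta$ are not $\mathbb{Z}$-equivalent. Alternatively, one can note that $\tau(\alpha) = \tau(\beta)$ and apply Lemma \ref{lem6.3BEG}(ii).

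The only delicate step is the bookkeeping of the Galois action on the indices: whether one writes $P_\sigma$ or $P_\sigma^{-1}$ depends on the convention. This causes no trouble, because the subgroup of $A_4$ fixing $k$ is the full cyclic group generated by $(i\, j\, l)$, and that group acts transitively on $\{i,j,l\}$ under either convention. This is exactly where the hypothesis $G \cong A_4$ is used: it guarantees that the stabilizer of $k$ in $G$ is transitive on the remaining three indices.
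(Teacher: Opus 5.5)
Your proof is correct and follows the paper's argument. The paper likewise applies the two $3$-cycles of $A_4$ fixing $k$ to obtain $\varepsilon_{ki}=\varepsilon_{kj}=\varepsilon_{kl}$, then concludes $\tau(\alpha)=\tau(\beta)$, which contradicts Lemma \ref{lem6.3BEG}. Your extra steps (showing all $\varepsilon_{mn}$ coincide, so that $\varepsilon=\pm1$ and $\beta=\pm\alpha+c$ with $c\in\mathbb{Z}$) just unpack that lemma directly, and they also spell out the ``similarly'' the paper leaves for general $k$.
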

\begin{proof}
Assume  $\varepsilon_{1i} = \varepsilon_{1j}$, with distinct $i, j \in \{2, 3, 4\}$. Applying the $A_4$ permutations $(2\,  3\,  4)$ and $(2\,  4\,  3)$, we conclude that
$$
 \varepsilon_{12} = \varepsilon_{13} =  \varepsilon_{14}.
 $$
 This implies $\tau(\alpha) = \tau(\beta)$, which, by Lemma \ref{lem6.3BEG}, contradics our assumption of $\alpha$ and $\beta$  not being $\mathbb{Z}$-equivalent.
 More generally, if $\varepsilon_{ki} = \varepsilon_{kj}$, we may apply the two $A_4$ permutations that fix the index $k$ and permute the other three indices.  Similarly, we arrive at a contradiction with Lemma  \ref{lem6.3BEG}.
 \end{proof}

The next proposition consists of basic facts about $A_4$-quartic number fields, which will be used frequently in our proofs of the main results.
\begin{prop}\label{rep9.3BEG}
Let $K$ be a quartic number field whose normal closure $\tilde{K}$ has  Galois group isomorphic to $A_4$. 
\begin{enumerate}
   \item[(i)] 
    $\textrm{Gal}(\tilde{K}/K) \cong \{  (1), (2 \, 3\, 4), (2 \, 4\, 3)\}$.
     \item[(ii)]  The field $\tilde{K}$ does not contain any primitive second or third root of unity.
    \end{enumerate}
\end{prop}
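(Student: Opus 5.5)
The plan is to read both parts off the Galois correspondence for $\tilde{K}/\mathbb{Q}$, using only the subgroup structure of $A_4$. Throughout, I identify $G=\textrm{Gal}(\tilde{K}/\mathbb{Q})$ with its image in $S_4$ via its action on the conjugates $\alpha^{(1)},\dots,\alpha^{(4)}$. I identify $K$ with $\mathbb{Q}(\alpha^{(1)})\subseteq\tilde{K}$.

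For (i), I will show that $\textrm{Gal}(\tilde{K}/K)$ is exactly the stabilizer of the index $1$ in $G\cong A_4$. First, an element $\sigma\in G$ fixes $K=\mathbb{Q}(\alpha^{(1)})$ pointwise if and only if $\sigma(\alpha^{(1)})=\alpha^{(1)}$. Since the four conjugates are distinct, this holds if and only if $P_\sigma(1)=1$. Next, I list the elements of $A_4$ fixing $1$: the identity and the even permutations of $\{2,3,4\}$, namely $(2\,3\,4)$ and $(2\,4\,3)$. As a consistency check, $[\tilde{K}:K]=12/4=3$ matches the size of this subgroup. This gives (i).

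For (ii), I read the statement as asserting that $\tilde{K}$ contains no root of unity other than $\pm1$. Concretely, $\tilde{K}$ should contain neither $i$ (a primitive fourth root, the only nontrivial roots of unity of degree $2$ besides the cube roots) nor a primitive third root of unity $\omega$; primitive sixth roots are then excluded too, since they are $-\omega$. Each of $i$, $\omega$ generates a quadratic field, $\mathbb{Q}(i)$ or $\mathbb{Q}(\sqrt{-3})$. So it suffices to show that $\tilde{K}$ has no quadratic subfield. By the Galois correspondence, a quadratic subfield would correspond to a subgroup of $A_4$ of index $2$, i.e. of order $6$. I will then invoke the standard fact that $A_4$ has no subgroup of order $6$. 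A quick argument: a subgroup $H$ of index $2$ is normal, so it contains the square of every element of $A_4$. Every $3$-cycle $c$ equals $(c^2)^2$, so $H$ contains all eight $3$-cycles, contradicting $|H|=6$.

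The argument is routine, and the only real obstacle is interpreting the phrase ``primitive second root of unity'' correctly, since $-1$ lies in every field. I will state explicitly that the intended content is the absence of $\pm i$ and of the primitive third and sixth roots of unity, so that the only roots of unity in $\tilde{K}$ are $\pm1$. This is the form that will be needed when applying alternative (iii) of Proposition \ref{prop8.1BEG}. I will phrase the conclusion so that it covers every root of unity of order $3$, $4$ or $6$.
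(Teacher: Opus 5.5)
Your proposal is correct and takes the same route as the paper: part (i) identifies $\textrm{Gal}(\tilde{K}/K)$ with the stabilizer of the index $1$ in $A_4$, and part (ii) rules out $\pm i$ and primitive third (hence sixth) roots of unity because $A_4$ has no subgroup of order $6$, so $\tilde{K}$ has no quadratic subfield. The only differences are that you prove the no-order-$6$ fact directly, using the fact that squares lie in an index-$2$ subgroup, where the paper cites Brennan--MacHale, and that you state explicitly the reading of ``primitive second root of unity'' as $\pm i$, which is the form the later arguments actually use.
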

\begin{proof}

To see part (i) holds, we simply write $K = \mathbb{Q}(\alpha)$, and denote the algebraic conjugates of $\alpha$ by  $\alpha^{(1)}$,   $\alpha^{(2)}$,  $\alpha^{(3)}$, and 
$\alpha^{(4)}$. Assuming $\alpha = \alpha^{(1)}$, it is clear that   the subgroup of $A_4$ consisting of those permutations of $1, 2, 3,4$   that fix $1$ is isomorphic to  
$\textrm{Gal}(\tilde{K}/K)$.

Since $A_4$ has no subgroup of order $6$ (see \cite{BrMa}, for example), by the fundamental theorem of Galois theory, we conclude that  $\tilde{K}$ has no quadratic subfield. This proves part (ii).
\end{proof}

We use Proposition \ref{rep9.3BEG}, to modify Lemma 9.3 of \cite{BEG} and prove a similar result in the case of $A_4$-quartic number fields.

\begin{lemma}\label{lem9.3BEG}
Let $K$ be a quartic number field whose normal closure has Galois group
 $$ G \cong A_4.
 $$ 
 There is a finite set $E$ such that for every  pair $(\alpha, \beta)$  satisfying   
$$
K = \mathbb{Q}(\alpha) = \mathbb{Q}(\beta), \quad \mathbb{Z}[\alpha] = \mathbb{Z}[\beta], $$
and $\alpha, \beta$ not $\mathbb{Z}$-equivalent, at least one of the following alternatives holds:
\begin{enumerate}
    \item[\textnormal{(i)}] 
    $\varepsilon_{ij}/\varepsilon_{ik} \in E$ for each ordered triple $(i,j,k)$ of distinct indices from $\{1,2, 3, 4\}$,

    \item[\textnormal{(ii)}] 
    $\varepsilon_{ij}\,\varepsilon_{kl} = \varepsilon_{ik}\,\varepsilon_{jl}$ for each permutation  $(i, j,k,l)$ of $(1, 2, 3, 4)$,

    \item[\textnormal{(iii)}] 
    $\varepsilon_{ij} =  -\varepsilon_{kl}$ for each permutation $(i,j,k,l)$ of $(1,2,3,4)$.
    
  \end{enumerate}

\end{lemma}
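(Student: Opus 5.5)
We follow the proof of Lemma~9.3 of \cite{BEG}, using the $A_4$ facts of Proposition~\ref{rep9.3BEG} in place of the $S_4$ ones. The aim is to produce, for a fixed pair $(\alpha,\beta)$, an instance of the unit equation \eqref{eq8.5BEG} whose six unknowns are ratios of the $\varepsilon_{ij}$. We then apply Proposition~\ref{prop8.1BEG}, which returns a finite set $S$, and transport each of its three alternatives across all index triples using the Galois action.

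\textbf{Step 1: building the unit equation.} Write $\beta^{(i)}-\beta^{(j)}=\varepsilon_{ij}(\alpha^{(i)}-\alpha^{(j)})$ and use the additive relations
$(\alpha^{(i)}-\alpha^{(j)})+(\alpha^{(j)}-\alpha^{(k)})=\alpha^{(i)}-\alpha^{(k)}$, together with the same relation for $\beta$. This is the source of \eqref{eq8.3BEG}. Multiply three such relations around the cycles, for instance for the triples $(1,2,3)$, $(2,3,4)$, $(3,4,1)$ and also $(1,2,4)$. The $\beta$-difference quotients then cancel via a cross-ratio-type identity, leaving
$$\prod(x_r-1)=\prod(y_r-1),$$
where each $x_r,y_r$ has the shape $\varepsilon_{ij}/\varepsilon_{jk}$ (up to sign). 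All of these are units in $O^*_{\tilde K}$ by Lemma~\ref{lem6.1BEG}, and none equals $1$ by Lemma~\ref{shabnot1}. Concretely, I would take the two products of cross ratios of the $\beta$'s expressed through the $\alpha$'s in two ways, exactly as in \cite[\S 9]{BEG}.

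\textbf{Step 2: the three alternatives of Proposition~\ref{prop8.1BEG}.}
\begin{itemize}
\item \emph{Alternative (iii).} This requires some $x_ix_j$ or $x_i/x_j$ to be $-1$ or a primitive cube root of unity. By Proposition~\ref{rep9.3BEG}(ii), $\tilde K$ contains no primitive cube root of unity, so only the value $-1$ can occur. A relation such as $\varepsilon_{ij}\varepsilon_{kl}/(\varepsilon_{ik}\varepsilon_{jl})=-1$, or $\varepsilon_{ij}=-\varepsilon_{kl}$, then follows. The intended reading of Proposition~\ref{rep9.3BEG}(ii) is the absence of the quadratic subfield $\mathbb{Q}(\sqrt{-3})$.
\item \emph{Alternative (i).} Some ratio $\varepsilon_{ij}/\varepsilon_{ik}$ lies in $S$.
\item \emph{Alternative (ii).} The $y$'s form a signed permutation of the $x$'s, giving multiplicative relations of the form (ii) in the statement.
\end{itemize}

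\textbf{Step 3: propagation under $G$.} Apply $\sigma\in G$ to whichever relation was obtained. Since $\sigma(\varepsilon_{ij})=\varepsilon_{P_\sigma(i)P_\sigma(j)}$, the relation is transported to the corresponding permuted indices. The group $A_4$ acts transitively on ordered triples of distinct indices, because the stabiliser of a triple is trivial and $|A_4|=12=4\cdot3$. Hence:
\begin{itemize}
\item In case (i), take $E=\bigcup_\sigma \sigma(S)\cup\sigma(S)^{-1}$, which is finite. The ratio $\varepsilon_{ij}/\varepsilon_{ik}$ then lies in $E$ for every ordered triple $(i,j,k)$.
\item In cases (ii) and (iii), each relation attaches to the pair partition $\{\{i,j\},\{k,l\}\}$. $A_4$ acts transitively on the three such partitions through the $C_3$ quotient, so a relation holding for one permutation holds for all.
\end{itemize}
One must also check that the symmetries $\varepsilon_{ij}=\varepsilon_{ji}$ make the odd permutations follow from the even ones. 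This works because reversing the order within a pair is harmless.

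\textbf{Main obstacle.} The hardest part will be the bookkeeping in Steps~2--3. Under $S_4$, \cite{BEG} can reach every index configuration directly. Under $A_4$ only even permutations are available, so a relation like $\varepsilon_{12}\varepsilon_{34}=\varepsilon_{13}\varepsilon_{24}$ may a priori propagate only to a proper subset of the required identities. The remaining identities must then be obtained by combining the propagated ones with \eqref{epsilonijpm1}, or by choosing the unit equation in Step~1 asymmetrically. A further difficulty is that the signed-permutation alternative may match the $y$'s to the $x$'s in several ways. The subcases in which the matching does not yield (ii) have to be ruled out with Lemma~\ref{shabnot1} and Lemma~\ref{lem6.3BEG}, or else pushed into alternative (i), and I expect this case analysis to take most of the work.
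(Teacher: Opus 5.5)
\textbf{Gap in Step~2.} Your skeleton is the paper's: apply Proposition~\ref{prop8.1BEG} to the solution \eqref{specialunitsol} of \eqref{eq8.4BEG}, take $E$ to be the $G$-conjugates of $S$ and their inverses, and transport relations by $A_4$. The problem is that Step~2 sends the alternatives of Proposition~\ref{prop8.1BEG} to the wrong conclusions, so your case analysis would not close.

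\emph{Alternative~(iii) of Proposition~\ref{prop8.1BEG}.} It does not yield any alternative of the lemma. With $x_1=\varepsilon_{13}/\varepsilon_{23}$, $x_2=\varepsilon_{14}/\varepsilon_{34}$, etc., the value $-1$ gives relations like $\varepsilon_{13}\varepsilon_{14}=-\varepsilon_{23}\varepsilon_{34}$ or $\varepsilon_{13}\varepsilon_{34}=-\varepsilon_{14}\varepsilon_{23}$. These are not $\varepsilon_{ij}=-\varepsilon_{kl}$, and a cross ratio equal to $-1$ would not be alternative~(ii) either. What is needed is to show this alternative is impossible for $A_4$. Transport the relation by the $3$-cycles fixing an index and multiply. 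You get either $(\varepsilon_{12}\varepsilon_{13}\varepsilon_{14})^2=-(\varepsilon_{23}\varepsilon_{24}\varepsilon_{34})^2$, hence $(\varepsilon_{12}\varepsilon_{13}\varepsilon_{14})^4=-1$ by \eqref{epsilonijpm1}, which is impossible since $\sqrt{-1}\notin\tilde K$; or $P=-P$ for the nonzero product $P$ of all six $\varepsilon_{ij}$.

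\emph{Alternative~(ii) of Proposition~\ref{prop8.1BEG}.} Alternative~(iii) of the lemma actually comes out of this signed-permutation alternative, and in exactly a matching you planned to rule out or push into (i). Of the six possibilities $x_1\in\{y_1^{\pm1},y_2^{\pm1},y_3^{\pm1}\}$:
\begin{itemize}
\item Lemma~\ref{shabnot1} kills only $x_1=y_1$ and $x_1=y_2$.
\item $x_1=y_1^{-1}$ and $x_1=y_2^{-1}$ need the absence of primitive cube roots of unity. For instance, from $\varepsilon_{23}^2=\varepsilon_{12}\varepsilon_{13}$ and its image under $(1\,2\,3)$ one gets $(\varepsilon_{23}/\varepsilon_{13})^3=1$. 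This is where that hypothesis really enters, not in alternative~(iii).
\item $x_1=y_3$ gives (ii).
\item $x_1=y_3^{-1}$, i.e.\ $\varepsilon_{13}\varepsilon_{14}=\varepsilon_{23}\varepsilon_{24}$, is not contradictory. It is exactly what type~II pairs satisfy (see the paper's final remark), so neither Lemma~\ref{shabnot1}, Lemma~\ref{lem6.3BEG}, nor a reduction to (i) can dispose of it; it must be converted into (iii).
\end{itemize}
The missing conversion goes as follows. Apply $(1\,3)(2\,4)$ to get $\varepsilon_{14}^2=\varepsilon_{23}^2$. Exclude $\varepsilon_{14}=\varepsilon_{23}$: after transport by $(2\,3\,4)$, the two sides of \eqref{eq8.4BEG} become negatives of each other, while both are nonzero by Lemma~\ref{shabnot1}. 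Hence $\varepsilon_{14}=-\varepsilon_{23}$, which transports to (iii).

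\textbf{Smaller points.} $A_4$ is not transitive on the $24$ ordered triples; it has two orbits, exchanged by $j\leftrightarrow k$. Your $E$ still works, but only because it is closed under inversion. Your worry about transporting (ii) is unfounded: $(2\,3\,4)$ cyclically permutes its three identities, so one of them gives all.
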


\begin{proof}
Our proof is similar to that of Lemma 9.3 of \cite{BEG}. We replace the set  $\mathcal{E}$ defined for the case of $S_4$-quartic number fields in \cite{BEG},  by a similar set $E$, defined below in \eqref{defofE}.
 Moreover, we will use the fact that the normal closure of an $A_4$-quartic number field contains no primitive second or third root of unity. 

We recall (see  equation \cite[eq. (8.4)]{BEG})
\begin{equation}\label{eq8.4BEG}
\left(  \frac{\varepsilon_{ik}}{\varepsilon_{jk}} - 1\right) \left(  \frac{\varepsilon_{il}}{\varepsilon_{kl}} - 1\right) \left(  \frac{\varepsilon_{ij}}{\varepsilon_{jl}} - 1\right) = 
\left(  \frac{\varepsilon_{ij}}{\varepsilon_{jk}} - 1\right) \left(  \frac{\varepsilon_{ik}}{\varepsilon_{kl}} - 1\right) \left(  \frac{\varepsilon_{il}}{\varepsilon_{jl}} - 1\right).
\end{equation}
which can easily be verified by the definition in \eqref{defofepsilon}.
Lemma \ref{lem6.1BEG}, together with \eqref{eq8.4BEG}, implies that
\begin{equation}\label{specialunitsol}
\left(  \frac{\varepsilon_{ik}}{\varepsilon_{jk}}, \frac{\varepsilon_{il}}{\varepsilon_{kl}} ,  \frac{\varepsilon_{ij}}{\varepsilon_{jl}},
\frac{\varepsilon_{ij}}{\varepsilon_{jk}},  \frac{\varepsilon_{ik}}{\varepsilon_{kl}} , \frac{\varepsilon_{il}}{\varepsilon_{jl}} \right) \in {O_{\tilde{K}}^{*}}^6
\end{equation}
satisfies the equation \eqref{eq8.5BEG}. We apply Proposition \ref{prop8.1BEG} to the specific solution \eqref{specialunitsol} of the equation \eqref{eq8.5BEG}.

Let $S$ be the finite set from Proposition \ref{prop8.1BEG}.
Let $E$ consist of all algebraic conjugates 
 of the elements from $S$, and their multiplicative inverses; i.e.,
\begin{equation}\label{defofE}
E =\{ \sigma(\gamma), \left(\sigma(\gamma)\right)^{-1} : \gamma \in S, \sigma \in G \}.
\end{equation}
First,  suppose for the solution \eqref{specialunitsol} that case~(i) of Proposition \ref{prop8.1BEG} holds.  
This, via the definition set $E$ in  \eqref{defofE},  leads to case~(i) of this lemma.

This is case~(i) of this lemma.

Next,  suppose that case~(ii) of Proposition \ref{prop8.1BEG} holds.  In the remainder of the proof, we work with the fixed choice of $(i, j, k, l) = (1, 2, 3,4)$, and the general result follows similarly. For this fixed choice, we have
\[
\frac{\varepsilon_{13}} {\varepsilon_{23}} \in \left\{  \frac{\varepsilon_{12}} {\varepsilon_{23}}, \frac{\varepsilon_{13}} {\varepsilon_{34}}, \frac{\varepsilon_{14}} {\varepsilon_{24}},    \frac{\varepsilon_{23}} {\varepsilon_{12}}, \frac{\varepsilon_{34}} {\varepsilon_{13}}, \frac{\varepsilon_{24}} {\varepsilon_{14}}\right\}.
\]
By Lemma \ref{shabnot1}, one can easily see that   $\frac{\varepsilon_{13}} {\varepsilon_{23}}$ cannot be equal to either  $\frac{\varepsilon_{12}} {\varepsilon_{23}}$ or $\frac{\varepsilon_{13}} {\varepsilon_{34}}$.
Next, we proceed to show that 
$ \frac{\varepsilon_{13}} {\varepsilon_{23}} \neq \frac{\varepsilon_{23}} {\varepsilon_{12}}.
$
 Suppose, on contarary,  $\frac{\varepsilon_{13}} {\varepsilon_{23}} = \frac{\varepsilon_{23}} {\varepsilon_{12}}$, then  
 $${\varepsilon_{23}}^2 =  {\varepsilon_{12}}{\varepsilon_{13}}.
 $$
 Applying the permutation $(1\, 2\,  3) \in A_4$, we get
$${\varepsilon_{31}}^2 =  {\varepsilon_{23}}{\varepsilon_{21}}.
$$
 The last two identities imply
$
 \left(\frac{\varepsilon_{23}}{\varepsilon_{31}} \right)^3 = 1
 $. By part (ii) of Proposition \ref{rep9.3BEG},
we conclude that $\frac{\varepsilon_{23}}{\varepsilon_{31}} =1$,
which is again impossible by Lemma \ref{shabnot1}. 
 Similarly, one can see
  $$ \frac{\varepsilon_{13}} {\varepsilon_{23}} \neq  \frac{\varepsilon_{34}} {\varepsilon_{13}}.
 $$

So far we have shown that, under the assumption that case~(ii) of Proposition \ref{prop8.1BEG} holds for our specific solution
$$
\left(  \frac{\varepsilon_{13}}{\varepsilon_{23}}, \frac{\varepsilon_{14}}{\varepsilon_{34}} ,  \frac{\varepsilon_{12}}{\varepsilon_{24}},
\frac{\varepsilon_{12}}{\varepsilon_{23}},  \frac{\varepsilon_{13}}{\varepsilon_{34}} , \frac{\varepsilon_{14}}{\varepsilon_{24}} \right)
$$
to  the equation \eqref{eq8.5BEG}, we must have
\[
\frac{\varepsilon_{13}} {\varepsilon_{23}} \in \left\{   \frac{\varepsilon_{14}} {\varepsilon_{24}},  \frac{\varepsilon_{24}} {\varepsilon_{14}}\right\}.
\]

Now assume  $\frac{\varepsilon_{13}} {\varepsilon_{23}} = \frac{\varepsilon_{14}} {\varepsilon_{24}}$.
Then we have 
$
{\varepsilon_{13}} {\varepsilon_{24}} = {\varepsilon_{23}} {\varepsilon_{14}}$, 
and applying $A_4$-permutations $(2\,  3 \, 4)$ and $(2\,   4 \, 3)$, we get
\[
{\varepsilon_{14}} {\varepsilon_{32}} = {\varepsilon_{34}} {\varepsilon_{12}} , \quad  \, {\varepsilon_{12}} {\varepsilon_{43}} = {\varepsilon_{42}} {\varepsilon_{13}},
\]
respectively. Therefore, we get case~(ii) of this lemma. 

If, on the the other hand, $\frac{\varepsilon_{13}} {\varepsilon_{23}} =  \frac{\varepsilon_{24}} {\varepsilon_{14}}$, we proceed to show that this is the case~(iii) of this lemma.
Assume
 $
\frac{\varepsilon_{13}} {\varepsilon_{23}} =  \frac{\varepsilon_{24}} {\varepsilon_{14}}$. 
Then
\begin{equation}\label{morework1}
{\varepsilon_{13}} {\varepsilon_{14}} = {\varepsilon_{23}} {\varepsilon_{24}}.
\end{equation}
Applying the $A_4$-permutation $(1\, 3)(2\,4)$ to both sides of \eqref{morework1},we get
\begin{equation}\label{morework2}
{\varepsilon_{13}} {\varepsilon_{32}} = {\varepsilon_{41}} {\varepsilon_{24}}
\end{equation}
Combining \eqref{morework1} and \eqref{morework2}, we obtain
\[
{\varepsilon_{14}}^2 = {\varepsilon_{23}}^2
\]
Since, by part (ii) of Proposition \ref{rep9.3BEG}, there is no primitive second root of unity in the number field $\tilde{K}$, we have
\[
{\varepsilon_{14}} = {\varepsilon_{23}} \quad \textrm{or} \quad
{\varepsilon_{14}} = - {\varepsilon_{23}}.
\]

Let us first assume that 
\begin{equation}\label{evenmore1}
{\varepsilon_{14}} = {\varepsilon_{23}}.
\end{equation}
Applying  the $A_4$ permutations   $(2\,  3 \, 4)$ and  $(2\,  4\,  3)$ to \eqref{evenmore1}, we get
\begin{equation}\label{evenmore2}
{\varepsilon_{12}} = {\varepsilon_{34}}, \quad {\varepsilon_{13}}= {\varepsilon_{42}},
\end{equation}
respectively.
Substitutig \eqref{evenmore1} and \eqref{evenmore2}  into \eqref{eq8.4BEG} with $(i,  j , k , l) = (1, 2,  3, 4)$, we have
\[
\left(\frac{\varepsilon_{13}} {\varepsilon_{14}} -1 \right) \left(\frac{\varepsilon_{14}} {\varepsilon_{12}} -1 \right) \left( \frac{\varepsilon_{12}} {\varepsilon_{13}}-1 \right)  =   \left(\frac{\varepsilon_{12}} {\varepsilon_{14}} -1 \right) \left( \frac{\varepsilon_{13}} {\varepsilon_{12}}-1 \right) \left( \frac{\varepsilon_{14}} {\varepsilon_{13}}-1 \right),
\]
 which is impossible as the left side is the negative of the right side, while by Lemma \ref{shabnot1}, both sides are non-zero.
Therefore, the identity ${\varepsilon_{14}}^2 = {\varepsilon_{23}}^2$
implies that
 \begin{equation}\label{negabove}
{\varepsilon_{14}} = -{\varepsilon_{23}}.
\end{equation}
Applying  the $A_4$-permutations $(2\,  3\,  4)$  and  $(2\,  4\,  3)$ to \eqref{negabove}, we get  
 \begin{equation}\label{negafterperm}
{\varepsilon_{12}} = - {\varepsilon_{34}}, \quad {\varepsilon_{13}}=  -{\varepsilon_{42}}.
\end{equation}
The identities in \eqref{negabove} and \eqref{negafterperm} lead to case~(iii) of this lemma.

Finally, we will show that case~(iii) of Proposition \ref{prop8.1BEG} cannot hold for our specific solution.
By part (i) of  Proposition \ref{rep9.3BEG}, there is no primitive third root of unity in $\tilde{K}$.
In what follows, we observe  that 
\[
\frac{\varepsilon_{ik}\,\varepsilon_{il}}{\varepsilon_{jk}\,\varepsilon_{kl}} \neq -1,
\quad
\frac{\varepsilon_{ik}\,\varepsilon_{kl}}{\varepsilon_{il}\,\varepsilon_{jk}} \neq -1.
\]
for any  permutation 
$(i,j,k,l)$ of $(1,2,3,4)$.
For simplicity, we show this for the fixed choice  $(i, j, k, l) = (1, 2, 3, 4)$, and the general claim can be shown similarly.

If $\frac{\varepsilon_{13}\,\varepsilon_{14}}{\varepsilon_{23}\,\varepsilon_{34}} = -1$, 
then
\[
\varepsilon_{13}\,\varepsilon_{14} =  -{\varepsilon_{23}\,\varepsilon_{34}},
\]
and applying the permutations $(3 4 2)$ and $(3 2 4)$, we get
\[
\varepsilon_{14}\,\varepsilon_{12} =  -{\varepsilon_{34}\,\varepsilon_{42}} ,
\]
\[
\varepsilon_{12}\,\varepsilon_{13} =  -{\varepsilon_{42}\,\varepsilon_{23}}.
\]
Consequently,
\begin{equation}\label{twoandtwo}
\left(\varepsilon_{12}\varepsilon_{13}\,\varepsilon_{14}\right)^2 =  -\left({\varepsilon_{24}\,\varepsilon_{23}\,\varepsilon_{34}} \right)^2.
\end{equation}
By \eqref{epsilonijpm1}, we have $\varepsilon_{12}\varepsilon_{13}\,\varepsilon_{14}\, {\varepsilon_{24}\,\varepsilon_{23}\,\varepsilon_{34}} = \pm 1$. Therefore, the identity \eqref{twoandtwo} implies
\[
\left(\varepsilon_{12}\varepsilon_{13}\,\varepsilon_{14}\right)^4 = -1,
\]
 which is impossible as $\tilde{K}$ does not contain any primitive second root of unity, by part (ii) of Proposition \ref{rep9.3BEG}.
 
 If $\frac{\varepsilon_{13}\,\varepsilon_{34}}{\varepsilon_{14}\,\varepsilon_{23}} = -1$, 
 then
  \[
 {\varepsilon_{13}\,\varepsilon_{34}} = - {\varepsilon_{14}\,\varepsilon_{23}} ,
 \]
and  applying the permutations $(3\,  4 \, 2)$ and $(3\,  2 \, 4)$, we get
 \[
 {\varepsilon_{14}\,\varepsilon_{42}} = - {\varepsilon_{12}\,\varepsilon_{34}} ,
 \]
 \[
 {\varepsilon_{12}\,\varepsilon_{23}} = - {\varepsilon_{13}\,\varepsilon_{42}} .
 \]
 Consequently,
 \[
 \varepsilon_{13}\,\varepsilon_{34} \, \varepsilon_{14}\,\varepsilon_{42} \, \varepsilon_{12}\,\varepsilon_{23} = - \varepsilon_{14}\,\varepsilon_{23} \, \varepsilon_{12}\,\varepsilon_{34}\,  {\varepsilon_{13}\,\varepsilon_{42}},
 \]
 which is impossible, as the left-hand and the right-hand sides of the above identity are non-zero, by \eqref{epsilonijpm1}.
  \end{proof}

 \section{Proof of Lemma \ref{newlemA4}}\label{lastproof}

We will closely follow the proof of part (ii) of Theorem 1.2 in \cite{BEG}, which is for the case of $S_4$-quartic number fields. All the required adjustments for the new case of $A_4$-quartic number fields are established in the previous lemmas in Section \ref{A4Section}.

\begin{proof}
Let  
\begin{equation}\label{thepaialphabeta}
{O} = \mathbb{Z}[\alpha] = \mathbb{Z}[\beta]
\end{equation}
be a two-times monogenic order in $O_K$, where $\alpha, \beta$ are not $\mathbb{Z}$-equivalent.

{\it Case~(i)}.  Assume that the pair $(\alpha,\beta)$  in \eqref{thepaialphabeta}
satisfies alternative~(i) of Lemma~\ref{lem9.3BEG}.
By Lemma~\ref{shabnot1} and \eqref{eq8.3BEG}, there is a finite set $F$ independent of 
$\alpha, \beta$ such that 
\[
\frac{\beta^{(i)} - \beta^{(j)}}{\beta^{(i)} - \beta^{(k)}} \in F
\qquad\text{for any three distinct } i,j,k \in \{1,2, 3, 4\}.
\]
So for the pair $\tau(\beta)$ defined in \eqref{eq6.1BEG}, there are only finitely many 
possibilities.  
Then Lemmas~\ref{lem6.3BEG} and \ref{lem7.2BEG} imply that there are only finitely many possibilities for such orders ${O} = \mathbb{Z}[\beta]$.

{\it Case~(ii)}. Assume  that $\alpha,\beta$ in \eqref{thepaialphabeta} satisfy alternative~(ii) of Lemma~\ref{lem9.3BEG}.  For every quadruple $(i,j,k,l)$ of distinct indices from $\{1, 2, 3, 4\}$, we have
\[
\frac{(\beta^{(i)} - \beta^{(j)})(\beta^{(k)} - \beta^{(l)})}
     {(\beta^{(i)} - \beta^{(k)})(\beta^{(j)} - \beta^{(l)})}
=
\frac{(\alpha^{(i)} - \alpha^{(j)})(\alpha^{(k)} - \alpha^{(l)})}
     {(\alpha^{(i)} - \alpha^{(k)})(\alpha^{(j)} - \alpha^{(l)})}.
\]

Since the cross ratio of any four numbers among the $\alpha^{(i)}$’s is equal 
to the cross ratio of the corresponding four numbers among the $\beta^{(i)}$’s, 
 there exists a matrix
\[
M  = 
\begin{pmatrix}
m_{1} & m_{2} \\
m_{3} & m_{4}
\end{pmatrix}
\in \textrm{GL}_2(\tilde{K})
\]
such that
\begin{equation}\label{hencehold}
\frac{m_{1}\alpha^{(i)} + m_{2}}{m_{3}\alpha^{(i)} + m_{4}} = \beta^{(i)}
\qquad\text{for } i=1,2, 3,4.
\end{equation}
We may assume that the first nonzero entry among $m_{1},m_2, m_3, m_{4}$ is $1$, then the matrix $M$ is 
uniquely determined.
 Any $\sigma \in \textrm{Gal}(\tilde{K}/k)$ permutes the two sequences 
$$
\alpha= \alpha^{(1)},\alpha^{(2)}, \alpha^{(3)}, \alpha^{(4)}\, \quad \textrm{and} \quad  \beta = \beta^{(1)},\beta^{(3)} \beta^{(3)} ,\beta^{(4)}
$$
 in the same manner, and 
\eqref{hencehold} holds with $\sigma(M)$ in place of $M$, so $\sigma(M)=M$.  
It follows that $M \in \textrm{GL}_{2}(\mathbb{Q})$. Since we assumed $\alpha$ and $\beta$ are not $\mathbb{Z}$-equivalent, we have  $m_{3}\neq 0$. Further,  by multiplying $M$ by 
a suitable integer, we may arrange that 
$$m_{1}, m_2, m_3, m_{4}\in \mathbb{Z}, \, \textrm{and} \, \gcd (m_{1},m_2, m_3,,m_{4})=1.
$$

 By Lemma \ref{lem6.4BEG}, we conclude that that ${O}=\mathbb{Z}[\alpha]=\mathbb{Z}[\beta]$ is of type~I.

{\it Case~(iii)}.  Assume  that $\alpha,\beta$ in \eqref{thepaialphabeta} satisfy alternative~(iii) of Lemma~\ref{lem9.3BEG}. Let $\varepsilon_{ij}$ be defined for this choice of $\alpha$ and $\beta$ as   in \eqref{defofepsilon}.
We define
\begin{equation}\label{defofu0}
u_{0} := \varepsilon_{12}\varepsilon_{13}\varepsilon_{14},
\end{equation}
and 
\begin{equation}\label{defofalpha0}
\alpha_{0} := \frac{1}{2}u_{0}\left(\varepsilon^{-1}_{12} + \varepsilon^{-1}_{13} 
+ \varepsilon^{-1}_{14}\right),\qquad
\beta_{0} := \frac12\left(\varepsilon_{12} + \varepsilon_{13} + \varepsilon_{14}\right).
\end{equation}

 By part~(i) of Proposition~\ref{rep9.3BEG}, $u_{0},\alpha_{0},\beta_{0}$ are invariant under $\mathrm{Gal}(\tilde{K}/K)$ and 
belong to $K$.
Moreover,
by  \eqref{epsilonijpm1} and since  we are assuming 
$\varepsilon_{ij} = - \varepsilon_{kl}$, 
for every permutation  $(i, j, k, l)$ of $(1, 2, 3, 4)$, by the definition of $u_0$ in \eqref{defofu0}, we have 
\[
\pm 1= \varepsilon_{12}\varepsilon_{34}\varepsilon_{13} \varepsilon_{24}\varepsilon_{14}\varepsilon_{23} = -u_0^2.
\]
Therefore, by part~(ii) of  Proposition~\ref{rep9.3BEG}, we obtain
\begin{equation}\label{u01}
u_0 = \pm 1.
\end{equation}

Further, using the identities in alternative~(iii) of Lemma~\ref{lem9.3BEG}, we get
\begin{equation}\label{eq9.6BEG}
\beta_{0}^{2} = \alpha_{0} + r_{0},\qquad
\alpha_{0}^{2} = u_{0}\beta_{0} + s_{0},
\end{equation}
with 
\begin{eqnarray*}
r_{0}& = &\frac14\left(\varepsilon_{12}^{2} + \varepsilon_{13}^{2} + \varepsilon_{14}^{2}\right) =  - \frac14\left(\varepsilon_{12} \varepsilon_{34}+ \varepsilon_{13}\varepsilon_{24} + \varepsilon_{14}\varepsilon_{23}\right),\\
s_{0} &= & \frac14 u_{0}^{2} \left(\varepsilon_{12}^{-2} + \varepsilon_{13}^{-2} + \varepsilon_{14}^{-2}\right) = \frac14  \left(\varepsilon_{12}^{-1} \varepsilon_{34}^{-1}+ \varepsilon_{13}^{-1} \varepsilon_{24}^{-1}+ \varepsilon_{14}^{-1}\varepsilon_{23}^{-1}\right) .
\end{eqnarray*}

Since $\varepsilon_{12} \varepsilon_{34}+ \varepsilon_{13}\varepsilon_{24} + \varepsilon_{14}\varepsilon_{23}$ and $\varepsilon_{12}^{-1} \varepsilon_{34}^{-1}+ \varepsilon_{13}^{-1} \varepsilon_{24}^{-1}+ \varepsilon_{14}^{-1}\varepsilon_{23}^{-1}$ are invariant under $\mathrm{Gal}(\tilde{K}/\mathbb{Q}) \cong A_4$, so are $r_{0}$ and $s_{0}$, and therefore
$$r_{0},s_{0}\in \mathbb{Q}.
$$
Applying   $(1 \, 2) (3\,4) \in A_4$, 
for the algebraic conjugates $\alpha_{0}^{(2)}$  abd $\beta_{0}^{(2)}$ of $\alpha_{0}$ and $\beta_{0}$, definde in \eqref{defofalpha0},  we have 
\[
\alpha_{0}^{(2)} 
= \frac{1}{2}u_{0}\left( \varepsilon_{21}^{-1} + \varepsilon_{23}^{-1} + \varepsilon_{24}^{-1}\right) 
=  \frac{1}{2}u_{0}\left(\varepsilon_{12}^{-1} - \varepsilon_{13}^{-1} - \varepsilon_{14}^{-1}\right),\quad \textrm{with}\, u_0 = \pm 1
\]
and 
\[
\beta^{(2)}_{0}
=
\frac12\left(\varepsilon_{12} + \varepsilon_{24} + \varepsilon_{23}\right)
=
\frac{1}{2}
(\varepsilon_{12} - \varepsilon_{13} - \varepsilon_{14}).
\]
By \eqref{defofu0}, and the above identities for $\alpha_{0}^{(2)}$ and $\beta_{0}^{(2)}$, we have 
\[
\frac{\alpha^{(1)}_{0} - \alpha^{(2)}_{0}}{\beta^{(1)}_{0} - \beta^{(2)}_{0}}
=  u_0 \frac{-\varepsilon_{13}^{-1} - \varepsilon_{14}^{-1}}{-\varepsilon_{13} - \varepsilon_{14}} =  u_0  \varepsilon^{-1}_{13} \varepsilon^{-1}_{14}= \varepsilon_{12}.
\]
More generally, we have
\begin{equation}\label{eq9.7BEG}
\frac{\alpha^{(i)}_{0} - \alpha^{(j)}_{0}}
{\beta^{(i)}_{0} - \beta^{(j)}_{0}}
=
\varepsilon_{ij}
\qquad
(1 \le i,j \le 4,\, i \neq j),
\end{equation}
where  $\varepsilon_{ij} = \varepsilon_{ij}(\alpha, \beta)$ is defined in \eqref{defofepsilon} for the fixed choice of monogenizers $\alpha$ and $\beta$. Since, by \eqref{epsilonijpm1}, $\varepsilon_{ij}$ are non-zero, we conclude that the four algebraic numbers $\alpha_{0} = \alpha_{0} ^{(1)}$,  $\alpha_{0} ^{(2)}$,  $\alpha_{0} ^{(3)},$ and  $\alpha_{0} ^{(4)}$  are distinct, and 
$
\mathbb{Q}(\alpha_{0})  = K$. 
Similarly, we have
$
 \mathbb{Q}(\beta_{0}) = K$.

Moreover, using \eqref{eq8.3BEG}  with  both pairs of monogenizers  
$(\alpha,\beta)$ and $(\alpha_{0},\beta_{0})$, via \eqref{eq9.7BEG} and by Lemma \ref{shabnot1},
 we obtain
\[
\frac{\beta^{(i)} - \beta^{(j)}}{\beta^{(i)}_{0} - \beta^{(j)}_{0}}
=
\frac{\beta^{(i)} - \beta^{(k)}}{\beta^{(i)}_{0} - \beta^{(k)}_{0}}
\qquad
(1 \le i,j,k \le 4,\; i,j,k \text{ distinct}).
\]
Multiplying this identity by $\varepsilon_{ij}/\varepsilon_{ik}$ yields
\[
\frac{\alpha^{(i)} - \alpha^{(j)}}{\alpha^{(i)}_{0} - \alpha^{(j)}_{0}}
=
\frac{\alpha^{(i)} - \alpha^{(k)}}{\alpha^{(i)}_{0} - \alpha^{(k)}_{0}}
\qquad
(1 \le i,j,k \le 4,\; i,j,k \text{ distinct}).
\]
This shows that
\[
\tau(\beta)=\tau(\beta_{0}) \quad \textrm{and} \quad  \tau(\alpha)=\tau(\alpha_{0}),
\]
where $\tau(\cdot)$ is defined in \eqref{eq6.1BEG}. By  part (i) of Lemma~\ref{lem6.3BEG}, there exist 
$\rho,\rho' \in \mathbb{Q}$ such that
\[
\alpha = \pm \alpha_{0} + \rho 
\qquad
\beta = \pm\beta_{0} + \rho'.
\]
Combining this with \eqref{eq9.6BEG}, we obtain
\[
\beta = \mu_{0}\alpha^{2} + \mu_{1}\alpha + \mu_{2},
\qquad
\alpha = \nu_{0}\beta^{2} + \nu_{1}\beta + \nu_{2},
\]
with $\mu_{0}, \mu_{1}, \mu_{2}, \nu_{0}, \nu_{1}, \nu_{2} \in \mathbb{Q}$ and $\mu_{0} \nu_{0} \neq 0$. However, since we assumed $\mathbb{Z}[\alpha] = \mathbb{Z}[\beta]$, we have indeed 
$$
\mu_{0}, \mu_{1}, \mu_{2}, \nu_{0}, \nu_{1}, \nu_{2} \in \mathbb{Z}.
$$
Therefore,  $O = \mathbb{Z}[\alpha] = \mathbb{Z}[\beta]$ is an order of type~II.
\end{proof}

\subsection*{ A Final Remark on Type II Relations.}
 In our proof of Lemma  \ref{newlemA4}, as well as in the original arguments in \cite{BEG} (see their proof of Theorem 3.2), it is shown that the condition $\varepsilon_{ij} = - \varepsilon_{kl}$ for every permutation $(i, j, k, l)$ of $(1, 2,3,4)$ results in having a type~II relation among two monogenizers, in the cases of $A_4$- and $S_4$-quartic number fields. One can easily see that the converse is also true; If $\alpha$ and $\beta$ have type~II relation, then by definition, there exist integers $\mu_{0}, \mu_{1}, \mu_{2}, \nu_{0}, \nu_{1}, \nu_{2} $, such that
\[
\beta = \mu_{0}\alpha^{2} + \mu_{1}\alpha + \mu_{2},
\qquad
\alpha = \nu_{0}\beta^{2} + \nu_{1}\beta + \nu_{2},
\]
with $\mu_0 \nu_0 \neq 0$.
For every $i, j \in \{1, 2, 3, 4\}$, we have
$$
\varepsilon_{ij} =  \frac{\alpha^{(i)} - \alpha^{(j)}} 
{\beta^{(i)} - \beta^{(j)}} = \nu_0 \left(\beta^{(i)} + \beta^{(j)} \right) + \nu_1.
$$
Therefore, for any permutation $(i, j, k, l)$ of $(1, 2, 3, 4)$, we have
$$
\varepsilon_{ij} + \varepsilon_{kl}  =\nu  \in \mathbb{Z},
$$
with
$$
\nu= \nu_0 \left(\beta_i + \beta_j + \beta_k + \beta_l\right) + 2 \nu_1.
$$
Similarly, we have 
$$
\left(\varepsilon_{ij}\right)^{-1} + \left(\varepsilon_{kl}\right)^{-1} = \mu \in \mathbb{Z},
$$
with 
$$
\mu = \mu_0 \left(\alpha_i + \alpha_j + \alpha_k + \alpha_l\right) + 2 \mu_1.
$$
Therefore,
$$
1 = \varepsilon_{ij}  \left(\varepsilon_{ij}\right)^{-1}  = \left( \nu -  \varepsilon_{kl} \right) \left( \mu -  \left(\varepsilon_{kl}\right)^{-1} \right) = \nu \mu + 1-\frac{\nu}{ \varepsilon_{kl}}-\mu \varepsilon_{kl}.
$$
In other words,  for every distinct $k, l \in \{1, 2, 3, 4\}$, the algebraic integer  $\varepsilon_{kl} \neq 0$ satisfies the quadratic equation
$$
- \mu T^2 +\mu \nu  T - \nu =0,
$$
where $T$ is the variable.
By Lemma \ref{shabnot1}, $\varepsilon_{k i}$, $\varepsilon_{k j}$, and $\varepsilon_{k l}$ are distict non-zero algebraic integers. 
 We conclude that $\mu= \nu = 0$, whereby
$$
\varepsilon_{ij} = - \varepsilon_{kl},
$$
for any permutation $(i, j, k, l)$ of $(1, 2, 3, 4)$.

\section*{Acknowledgements} 
The first author gratefully acknowledges the support and hospitality of \emph{Lodha Mathematical Sciences Institute, Mumbai} and thanks the organizers of the Arithmetic Statistics program during the fall semester 2025, held at LMSI, where major progress was made towards the completion of this project.  This work was partially supported by the NSF award DMS-2327098.



\begin{thebibliography}{99}

\bibitem{AkhEss} S.~Akhtari, Quartic index form equations and monogenizations of quartic orders, {\it Essential Number Theory}, vol. 1 no.1 (2022)



\bibitem{Ben}
M.~A.~Bennett, On the representation of unity by binary cubic forms, {\it Trans.\ Amer.\ 
Math.\ Soc.} {\bf 353} (2001), 1507--1534.



\bibitem{BEG} {\sc B\'{e}rczes, Evertse, and Gy\H{o}ry}, Multiply monogenic orders, {\it Ann. Sc. Norm. Super. Pisa Cl. Sci. } $5$ Vol. XII (2013), 467--497.




\bibitem{Bha-notes}  M.~Bhargava, On the number of monogenizations of a quartic order (with an appendix by Shabnam Akhtari), {\it Publicationes Mathematicae Debrecen},  {\bf 100} (2022), 513--531.



\bibitem{GaussI}
M.~Bhargava, Higher composition laws I: A new view on Gauss composition, and
quadratic generalizations, {\it Ann. of Math.} {\bf 159} (2004), 217--250.


\bibitem{Bha04}  M.~Bhargava,  Higher composition laws III: The parametrization of quartic rings,  {\it Annals of Mathematics}, {\bf 159} (2004), 1329--1360




\bibitem{BrMa} M.~Brennan, D.~Machale, Variations on a theme: $A_4$ definitely has no subgroup of order six!, Math. Mag.
{\bf 73} (2000), 36--40.


\bibitem{Ded} R.~Dedekind, \"Uber die Zusammenhang zwischen der Theorie der Ideale und der Theorie
der h\"oheren Kongruenzen, {\it Abh. K\"onig. Ges. Wissen. G\"ottingen} {\bf 23} (1878), 1--23.




\bibitem{DF} D.~S.~Dummit and R.~M.~Foote, {\it Abstract Algebra}, 3rd ed., Wiley, 2004.


\bibitem{Eve11} J.~H.~Evertse, A  survey on monogenic orders,   {\it  Publ. Math. Debrecen}  $79$ (3) (2011), 411--422.



\bibitem{EG85} J.~H.~Evertse and K. Gy\H{o}ry, On unit equations and decomposable form equations, {\it J.Reine Angew. Math.} {\bf 358} (1985), 6--19.

\bibitem{EGST} J.~H.~Evertse, K. Gy\H{o}ry, C.~L. Stewart, and R. Tijdeman,
{\it On $S$-unit equations in two unknowns},
Invent. Math. \textbf{92} (1988), 461--477.


 \bibitem{EGbook}   J.~H.~Evertse and K.~Gy\H{o}ry,    Discriminant Equations in Diophantine Number Theory, vol. 32


\bibitem{thebook}  I.~Ga\'al,  Diophantine Equations and Power Integral Bases, Theory and Algorithms.
2nd edition, Birkh\"auser, Boston, 2019.
 
 \bibitem{GaPePo} I.~Ga\'al, A.~Peth\H{o} and M.~Pohst, Simultaneous Representation of Integers by a pair of quartic forms- with an application to index form equations in quartic number fields, {\it  J.
Number Theory} {\bf 57} (1996), 90--104.

 

\bibitem{Gyo76} K.~ Gy\H{o}ry, Sur les polyn\^omes \`a coefficients entiers et de discriminant donn\'e III, {\it Publ. Math. Debrecen}, {\bf 23} (1976),  141--165.

\bibitem{Has}H.~Hasse, Number Theory, Akademie-Verlag (1979).

\bibitem{Hen} K.~Hensel, Theorie der algebraischen Zahlen, Teubner Verlag, Leipzig and Berlin (1908). 




\bibitem{KW89} L.~C.~Kappe and B.~Warren, 
An elementary test for the Galois group of a quartic polynomial,
{\it Amer. Math. Monthly} \textbf{96} (1989), 133--137.




\bibitem{Thu}  A.~Thue, \"{U}ber Ann\"{a}herungswerte algebraischer Zahlen, {\it J. reine
angew. Math.} {\bf 135} (1909), 284--305.




 \end{thebibliography}
\end{document}